\newtheorem{dummy}{dummy}[section]
\newtheorem{lemma}[dummy]{Lemma}
\newtheorem{theorem}[dummy]{Theorem}
\newtheorem{corollary}[dummy]{Corollary}
\newtheorem{proposition}[dummy]{Proposition}
\theoremstyle{definition}
\newtheorem{definition}[dummy]{Definition}
\newtheorem{example}[dummy]{Example}
\newtheorem{remark}[dummy]{Remark}
\newtheorem{remarks}[dummy]{Remarks}
\newcommand{\bC}{\mathbb{C}}
\newcommand{\bP}{\mathbb{P}}
\newcommand{\bQ}{\mathbb{Q}}
\newcommand{\bR}{\mathbb{R}}
\newcommand{\bZ}{\mathbb{Z}}
\newcommand{\bfP}{\mathbf{P}}
\newcommand{\bSi}{\mathbf{\Si} }
\newcommand{\cA}{\mathcal{A}}
\newcommand{\cB}{\mathcal{B}}
\newcommand{\cD}{\mathcal{D}}
\newcommand{\cF}{\mathcal{F}}
\newcommand{\cG}{\mathcal{G}}
\newcommand{\cH}{\mathcal{H}}
\newcommand{\cK}{\mathcal{K}}
\newcommand{\cL}{\mathcal{L}}
\newcommand{\cP}{\mathcal{P}}
\newcommand{\cQ}{\mathcal{Q}}
\newcommand{\cO}{\mathcal{O}}
\newcommand{\cT}{\mathcal{T}}
\newcommand{\cV}{\mathcal{V}}
\newcommand{\cX}{\mathcal{X}}
\newcommand{\Hom}{\mathrm{Hom}}
\newcommand{\dghom}{\mathit{hom}}           
\newcommand{\Ker}{\mathrm{Ker}}
\newcommand{\gr}{\mathrm{gr}}
\newcommand{\qgr}{\mathrm{qgr}}
\newcommand{\tors}{\mathrm{tors}}
\newcommand{\rig}{ {\mathrm{rig}} }
\newcommand{\pr}{\mathrm{pr}}
\newcommand{\fofr}{\mathfrak{or}}
\newcommand{\Ga}{ {\Gamma} }
\newcommand{\si}{ {\sigma} }
\newcommand{\Si}{ {\Sigma} }
\newcommand{\tC}{\widetilde{C}}
\newcommand{\tH}{\widetilde{H}}
\newcommand{\tL}{\widetilde{L}}
\newcommand{\tM}{\widetilde{M}}
\newcommand{\tN}{\widetilde{N}}
\newcommand{\tD}{\widetilde{D}}
\newcommand{\tT}{\widetilde{T}}
\newcommand{\ta }{\widetilde{a}}
\newcommand{\tb} {\widetilde{b}}
\newcommand{\tf }{\widetilde{f}}
\newcommand{\tj }{\widetilde{j}}
\newcommand{\tit}{\widetilde{t}}
\newcommand{\tu }{\widetilde{u}}
\newcommand{\tsi}{ {\widetilde{\si}} }
\newcommand{\tSi}{ {\widetilde{\Si}} }
\newcommand{\tchi}{ {\widetilde{\chi} }}
\newcommand{\tcF}{ \widetilde{\cF} }
\newcommand{\tcV}{ \widetilde{\cV} }
\newcommand{\baN}{\bar{N}}
\newcommand{\bab}{\bar{b}}
\newcommand{\vc}{ {\vec{c}} }
\newcommand{\LS}{ {\Lambda_{\Si}} }
\newcommand{\LbS}{ {\Lambda_{\bSi}} }
\newcommand{\uchi}{\underline{\chi}}
\newcommand{\Spec}{\mathrm{Spec}}
\newcommand{\Ext}{\mathrm{Ext}}
\newcommand{\Tor}{\mathrm{Tor}}
\newcommand{\frakm}{\mathfrak{m}}
\newcommand{\Ntor}{N_{\mathrm{tor}} }
\newcommand{\Sh}{\mathit{Sh}}       
\newcommand{\naive}{\mathit{naive}} 
\newcommand{\Tr}{\mathit{Tr}}           
\newcommand{\Perf}{\cP\mathrm{erf}} 
\newcommand{\Ob}{\mathit{Ob}}       
\newcommand{\Shard}{\mathrm{Shard}}
\newcommand{\finfib}{\mathit{fin}}      
\renewcommand{\SS}{\mathit{SS}}     
\newcommand{\ltr}{\langle \Theta \rangle}
\newcommand{\ltrp}{\langle\Theta'\rangle}
\begin{document}
\title[Coherent-Constructible Correspondence for Toric DM Stacks]{The 
Coherent-Constructible Correspondence for Toric Deligne-Mumford Stacks}

\author{Bohan Fang}
\address{Bohan Fang, Department of Mathematics, Northwestern University,
2033 Sheridan Road, Evanston, IL  60208}
\email{b-fang@math.northwestern.edu}

\author{Chiu-Chu Melissa Liu}
\address{Chiu-Chu Melissa Liu, Department of Mathematics, Columbia University,
2990 Broadway, New York, NY 10027} \email{ccliu@math.columbia.edu}

\author{David Treumann}
\address{David Treumann, Department of Mathematics, Northwestern University,
2033 Sheridan Road, Evanston, IL 60208}
\email{treumann@math.northwestern.edu}

\author{Eric Zaslow}
\address{Eric Zaslow, Department of Mathematics, Northwestern University,
2033 Sheridan Road, Evanston, IL  60208}
\email{zaslow@math.northwestern.edu}

\begin{abstract}
We extend our previous work \cite{more} on coherent-constructible correspondence for
toric varieties to include toric Deligne-Mumford (DM) stacks.  
Following Borisov-Chen-Smith \cite{BCS},
a toric DM stack $\cX_\bSi$ is described by a ``stacky fan'' $\bSi=(N,\Si,\beta)$,
where $N$ is a finitely generated abelian group and $\Si$ is a simplicial fan
in  $N_\bR=N\otimes_{\bZ}\bR$.  From $\bSi$ we define a
conical Lagrangian $\Lambda_\bSi$ inside the cotangent $T^*M_\bR$
of the dual vector space $M_\bR$ of $N_\bR$, such that torus-equivariant, coherent sheaves on
$\cX_\bSi$ are equivalent to constructible sheaves on $M_\bR$
with singular support in $\LbS$. 

The microlocalization theorem of Nadler and the last author \cite{NZ,N} relates
constructible sheaves on $M_\bR$ to a Fukaya category on the cotangent $T^*M_\bR$, 
giving a version of homological mirror symmetry for toric DM stacks.
\end{abstract}
\maketitle

{\small \tableofcontents}

\section{Introduction}
\label{sec:introduction}

In \cite{more}, the familiar assignment in toric geometry
which associates polytopes to line bundles on toric varieties was
extended to an equivalence of categories between equivariant coherent sheaves
on a toric $n$-fold and constructible sheaves on $\bR^n.$
The equivalence -- the {\em coherent-constructible correspondence} (CCC),
is therefore a ``categorification'' of
Morelli's description of the K-theory of toric varieties in
terms of a polytope algebra \cite{M}.
Combining the CCC with the microlocalization theorem of Nadler and
the last author \cite{NZ, N}, one obtains an equivariant
version of homological mirror symmetry (HMS) which is compatible
with T-duality \cite{hms}.

In this paper, we extend CCC to toric Deligne-Mumford (DM) stacks,
and derive the attendant version of HMS.

In Section \ref{sec:ccc-varieties}
below, we recall for the reader the CCC and HMS for toric varieties.  We discuss the extension to
toric DM stacks in Section \ref{sec:ccc-orbifolds}, and illustrate it in
the simple example of a weighted projective plane
in Section \ref{sec:WPtwo}.  An outline of the paper follows.

In this paper, $\Si$ is always a {\em finite} fan, i.e., 
$\Si$ consists of finitely many cones. Therefore all the toric varieties
and toric DM stacks in this paper are {\em of  finite type}.

\subsection{CCC and HMS for toric varieties}
\label{sec:ccc-varieties}

Let $N \cong \bZ^n$ be a rank $n$ lattice, and let $\Si$ be a complete
fan in $N_\bR = N\otimes_\bZ\bR$. Let $X_\Si$ be the toric
variety defined by $\Si$, and let $T\cong (\bC^*)^n$ be the torus acting on $X_\Si$.
In \cite{more}, we defined a conical Lagrangian $\LS$ in the cotangent bundle
$T^*M_\bR$ of $M_\bR$, where $M_\bR$ is the dual real vector space of $N_\bR$,
and established a quasi-equivalence of triangulated dg-categories:\footnote{
The results in \cite{more} is valid over any commutative neotherian base ring.
The nonequivariant CCC has been studied by Bondal  \cite{Bondal},
and more recently by the third author \cite{Tr}.}
\begin{equation}\label{eqn:kappa}
\kappa: \Perf_T(X_\Si) \to Sh_{cc}(M_\bR;\LS)
\end{equation}
where $\Perf_T(X_\Si)$ is the category of $T$-equivariant perfect complexes on $X_\Si$,
and $Sh_{cc}(M_\bR;\LS)$ is the category of complexes of sheaves on $M_\bR$ with bounded,
constructible, compactly supported cohomology,  with singular support in $\LS$. Moreover,
\eqref{eqn:kappa} is a quasi-equivalence of {\em monoidal} dg categories,
with respect to the tensor product on $\Perf_T(X_\Si)$ and the convolution
product on $Sh_{cc}(M_\bR;\LS)$.
Combining \eqref{eqn:kappa} with the microlocalization theorem
\cite{NZ, N}, we have the following commutative diagram:
\begin{equation}\label{eqn:quasi}
\xymatrix{
\Perf_T(X_\Si) \ar[r]^{\kappa}  \ar[dr]_{\tau}
& Sh_{cc}(M_\bR; \LS) \ar[d]_\mu \\
& F(T^*M_\bR; \LS)
}
\end{equation}
where $F=TrFuk$ is the triangulated envelope of the Fukaya category. The objects in
$Fuk(T^*M_\bR;\LS)$ are Lagrangian branes which are bounded in the $M_\bR$ direction,
with boundary at infinity contained in $\LS$.
Here $\kappa$ is the CCC, and $\mu$ is referred to as the  {\em microlocalization} functor, which is a quasi-equivalence
of triangulated $A_\infty$-categories by the microlocalization theorem
\cite{NZ, N}, and $\tau=\mu\circ \kappa$ is referred to as the {\em T-duality} functor
because it is compatible with the T-duality \cite{hms}.

When $X_\Si$ is a smooth complete toric variety (so that $X_\Si$
can be viewed as a compact complex manifold), taking the cohomology
of \eqref{eqn:quasi} yields equivalences of three tensor\footnote{The
product on $DF(T^*M_\bR;\LS)$ is defined in \cite[Section 3.4]{hms}.}  
triangulated categories: 
\begin{equation}\label{eqn:H}
\xymatrix{
D_T(X_\Sigma) \ar[r]_\cong^{ H(\kappa) } \ar[dr]^\cong_{ H(\tau)}
& D_{cc}(M_\bR; \LS) \ar[d]_\cong^{ H(\mu)} \\
& DF(T^*M_\bR; \LS)
}
\end{equation}
where $D_T(X_\Si)= DCoh_T(X_\Si)$ is the bounded derived category of $T$-equivariant
coherent sheaves on $X_\Si$, and  $D_{cc}(M_\bR;\LS)= DSh_{cc}(M_\bR;\LS)$ is
the derived category of $Sh_{cc}(M_\bR;\LS)$. The equivalence $H(\tau)$ can be viewed as
a $T$-equivariant version of {\em  homological mirror symmetry} (HMS).

\subsection{CCC and HMS for toric DM stacks}
\label{sec:ccc-orbifolds}

Now let $N$ be a finitely generated abelian group, and let $\Si$ be
a complete {\em simplicial} fan in
$N_\bR=N\otimes_\bZ\bR$. Let $\bSi=(N,\Si,\beta)$ be a stacky fan in the sense of Borisov-Chen-Smith \cite{BCS}. 
Then $\bSi$ defines a complete toric DM stack $\cX_{\bSi}$, and
the complete toric variety $X_\Si$ is the coarse moduli space of $\cX_{\bSi}$.
A toric DM stack $\cX_{\bSi}$ is a toric orbifold (i.e. a toric
DM stack with trivial generic stabilizer) if $N$ is torsion free.
Given any stacky fan $\bSi$ there is a rigidification
$\bSi^\rig$ which defines an toric {\em orbifold} $\cX_{\bSi^\rig}$,
known as the rigidification of the toric DM stack $\cX_{\bSi}$ 
(see Section \ref{sec:rigidification}).

A toric DM stack $\cX_{\bSi}$ contains a DM torus $\cT= T\times \cB G$
as a dense open subset, where $T\cong (\bC^*)^{\dim_\bR N_\bR}$ is a torus and 
$G$ is the generic stabilizer. 
We define a conical Lagrangian $\LbS$ of $T^*M_\bR$
($\LS$ is a subset of $\LbS$) and establish
a quasi-equivalence of monoidal triangulated dg-categories
\begin{equation}\label{eqn:kappaDM}
\kappa: \Perf_\cT(\cX_\bSi) \to Sh_{cc}(M_\bR;\LbS)
\end{equation}
which is the stacky version of \eqref{eqn:kappa}.
Again, combining \eqref{eqn:kappaDM} with
the microlocalization theorem \cite{NZ, N}, we obtain the following
stacky version of \eqref{eqn:quasi}:
\begin{equation}\label{eqn:quasiDM}
\xymatrix{
\Perf_\cT(\cX_\bSi) \ar[r]^{\kappa}  \ar[dr]_\tau  &
Sh_{cc}(M_\bR; \LbS) \ar[d]_\mu \\
&F(T^*M_\bR; \LbS)
}
\end{equation}
where $\kappa$ is the CCC for toric DM stacks,   
$\mu$ is the microlocalization functor, and
$\tau = \mu\circ\kappa$ is the $T$-duality functor.
We remark that (i) $\Perf_{\cT}(\cX_\bSi)$ 
depends only on the rigidification $\cX_{\bSi^\rig}$
of $\cX_{\bSi}$, which is not the case for
the nonequivariant category $\Perf(\cX_\bSi)$ (see
Proposition \ref{rigid} for precise statements),
and (ii) $\LbS = \Lambda_{\bSi^\rig}$.


Taking cohomology of \eqref{eqn:quasiDM} 
yields equivalences of tensor triangulated categories,
exactly as in \eqref{eqn:H}, and $H(\tau)$ can be viewed as an equivariant
version of HMS for toric DM stacks.

The traditional version for HMS of toric DM stacks 
relates the bounded derived category of coherent
sheaves on a toric DM stack to the Fukaya-Seidel
category of the Landau-Ginzburg mirror of the toric DM stack. 
This version has been proved for weighted projective planes by 
Auroux-Katzarkov-Orlov \cite{AKO1}, for toric orbifolds 
of toric del Pezzo surfaces by Ueda-Yamazaki \cite{UeYa},
and more recently, for toric orbifolds of projective spaces
(of any dimension) by Futaki-Ueda \cite{FuUe}. Of course, there are 
many other works on homological mirror symmetry for
\emph{non-stacky} toric varieties. See, e.g., \cite[Section 1.3]{hms} for
a review of these results and further references.

\subsection{Example: the weighted projective plane $\bP(1,1,2)$}
\label{sec:WPtwo}

Let $N=\bZ^2$. The weighted projective plane $\cX_\bSi=\bP(1,1,2)$ is defined from the
stacky fan $\bSi = (N,\Si,\beta)$ in Figure 1.

\begin{figure}[h]
\begin{center}
\psfrag{S}{$\Si$}
\psfrag{BBBBBBBB}{\small $\quad \beta(\left[\begin{array}{c}n_1\\ n_2 \\n_3\end{array}\right])
= \left[\begin{array}{ccc} 1 & 0 & -1\\ 0 & 1 &  2 \end{array}\right]
\left[\begin{array}{c}n_1\\ n_2 \\n_3\end{array}\right]$ }
\includegraphics[scale=0.5]{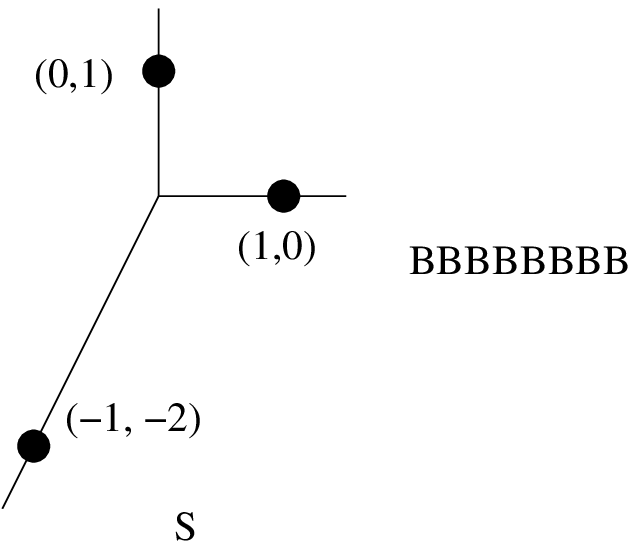}
\end{center}
\caption{Stacky Fan for $\bP(1,1,2)$}
\end{figure}

We first describe the derived category of $\bP(1,1,2)$, following
\cite[Section 2]{AKO1}. Define a graded polynomial algebra $S=\bC[x_0,x_1,x_2]$
graded by setting the degrees of $x_0,x_1,x_2$ to be $1,1,2$
respectively. The derived category of $\bP(1,1,2)$ is the category
$\qgr(S)=\gr(S)/\tors(S)$, where $\gr(S)$ is the category of finitely generated graded 
right $S$-modules, and $\tors(S)$ is the full subcategory of
$\gr(S)$ consisting of $S$-modules with finite dimensions
over $\bC$. Under this identification, the line bundle $\cO(l)$ over
$\bP(1,1,2)$ is $\widetilde {S(l)}$. Here $S(l)$ is the module $S$
in $\gr(S)$ shifted in $l$ degrees, and passing to
$\qgr(S)$, we denote it by $\widetilde{S(l)}$.

Decompose $S=\oplus_{i=0}^\infty S_i$ via degrees. The weighted
projective plane $\bP(1,1,2)$, as a stack, has a full strong exceptional
collection $\{\cO,\cO(1),\cO(2),\cO(3)\}$, with the morphism
$$\Ext^i(\cO(k),\cO(l))=\begin{cases}S_{l-k}, & \text{$l\ge k$, $i=0$},\\ 0,& \text{otherwise}, \end{cases}$$
and the obvious composition in the polynomial algebra $S$.

If we consider the singular toric variety (defined as GIT quotient)
given by the fan $\Si$ in Figure 1, we are dealing with the singular variety
$X_\Si= \bfP(1,1,2)$, not the stack. The result of \cite{more} says the
category of equivariant coherent sheaves on $\bfP(1,1,2)$ is
quasi-equivalent to a subcategory of constructible sheaves on the
plane $\bR^2$. 
\begin{figure}[h]
\begin{center}
\psfrag{LS}{$\LS/M$}
\psfrag{LbS}{$\LbS/M$}
\includegraphics[scale=0.6]{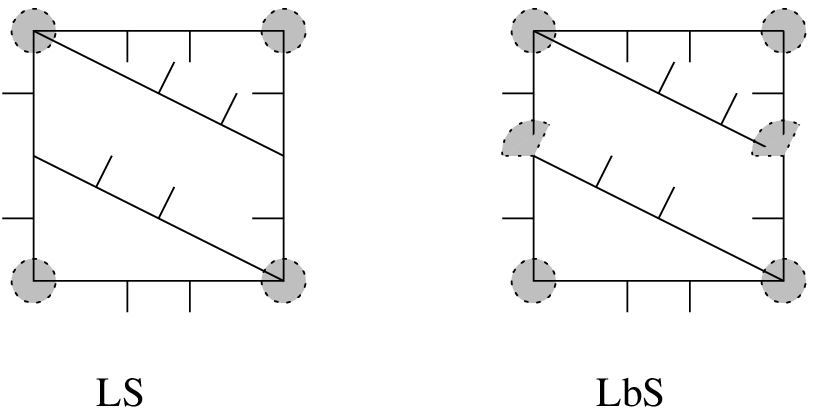}
\end{center}
\caption{Microlocal Supports for Constructible Sheaves: $\bfP(1,1,2)$ (left) and $\bP(1,1,2)$ (right)}
\end{figure}
More precisely, the subcategory of constructible
sheaves, denoted by $Sh_{cc}(\bR^2, \LS)$, is generated by
the pushing-forward with compact support of constant sheaves on the
triangles with vertices $(a,b)$, $(a,b+h)$, $(a+2h, b)$ with $a,b,h
\in\bZ$, $h\ge0$. Comparing with the line bundles on the stack
$\bP(1,1,2)$, we are only recovering the (all possible equivariant
versions of) sheaves $\cO, \cO(2),\cO(4),\dots$. However, if we
refine the lattice points, and allow $h$ to take values in half
integers, we have the whole derived category on $\bP(1,1,2)$.

\subsection{Outline}
\label{sec:outline} In Section \ref{sec:notation}, we set the
notation and describe the categories of sheaves we will employ.  In
Section \ref{sec:toricDM} we review the definition
of stacky fans and the construction of toric DM stacks, following
\cite{BCS} and \cite{FMN}. In Section \ref{sec:twisted},
we describe the relation between equivariant line bundles and (twisted) polytopes. 
In Section \ref{sec:Theta-Theta}, we construct the basic equivalence between equivariant ${\mathcal
O}$-modules on toric charts and $\bC$-modules on shifted dual cones.
In Section \ref{sec:reformulate}, we give intrinsic characterizations
of the categories introduced in Section \ref{sec:Theta-Theta}.
In Section \ref{sec:Perf-cc}, we derive our main result,
the coherent-constructible correspondence (CCC), which is
an equivalence between equivariant perfect complexes on the
toric DM stack and compactly supported constructible sheaves on 
$M_\bR$ with singular support in a conical Lagrangian determined
by the stacky fan. In Section \ref{sec:HMS}, we combine CCC and microloalization
to obtain equivariant HMS for toric DM stacks.

\subsection*{Acknowledgments}
We thank Hsian-Hua Tseng for his helpful suggestions.
We thank Johan de Jong, Fabio Nironi, and Jason Starr for
helpful conversations.
The work of EZ is supported in part by NSF/DMS-0707064.

\section{Notation and Conventions}
\label{sec:notation}

\subsection{Categories}
\label{sec:categories}

We will use the language of dg and $A_\infty$ categories throughout.
If $C$ is a dg or $A_\infty$ category, then $\dghom(x,y)$ denotes
the chain complex of homomorphisms between objects $x$ and $y$ of
$C$.  We will use $\Hom(x,y)$ to denote hom sets in non-dg or
non-$A_\infty$ settings.  We will regard the differentials in all
chain complexes as having degree $+1$, i.e. $d:K^i \to K^{i+1}$.  If
$K$ is a chain complex (of vector spaces or sheaves, usually) then
$h^i(K)$ will denote its $i$th cohomology object.

If $C$ is a dg (or $A_\infty$) category, then $\Tr(C)$ denotes  the
triangulated dg (or $A_\infty$) category generated by $C$, and
$D(C)$ denotes the cohomology category $H(Tr(C)).$\footnote{Here is one
construction of the triangulated envelope. The Yoneda
embedding $\mathcal Y:C\rightarrow mod(C)$ maps an object $L\in C$
to the  $A_\infty$ right $C$-module $hom_{C}(-, L)$. The  functor
$\mathcal Y$ is a quasi-embedding of $C$ into the triangulated
category $mod(C)$. Let $Tr(C)$ denote the category of twisted
complexes of representable modules in $mod(C).$  Then $Tr(C)$ is a
triangulated envelope of $C$. } The triangulated category
$H(Tr(C))$ is sometimes called the \emph{derived category} of $C$.

\subsection{Constructible and microlocal geometry}
\label{sec:constructible}

We refer to \cite{KS} for the microlocal theory of sheaves. If $X$
is a topological space we let $\Sh(X)$ denote the dg category  of
bounded chain complexes of sheaves of $\bC$-vector spaces on $X$,
localized with respect to acyclic complexes (see \cite{Dr} for localizations
of dg categories).  If $X$ is a
real-analytic manifold, $\Sh_c(X)$ denotes the full subcategory of
$\Sh(X)$ of objects whose cohomology sheaves are constructible with
respect to a real-analytic stratification of $X$. Denote by
$\Sh_{cc}(X) \subset \Sh_c(X)$ the
full subcategory of objects which have compact support. We use
$D_c(X)$ and $D_{cc}(X)$ to denote the derived categories $D(Sh_c(X))$
and $D(Sh_{cc}(X))$ respectively.

The \emph{standard constructible sheaf} on the submanifold $i_{Y}:
Y\hookrightarrow X$  is defined as the push-forward of the constant
sheaf on $Y$, i.e. $i_{Y*} \bC_Y,$ as an object in $Sh_c(X)$. The
Verdier duality functor $\cD: Sh_c^\circ(X)\to \Sh_c(X)$ takes
$i_{Y*}\bC_Y$ to the \emph{costandard constructible sheaf} on $X$.
We know $\cD(i_{Y*}\bC_Y)=i_{Y!}\cD(\bC_Y)=i_{Y!} \omega_Y$. Here
$\omega_Y=\cD(\bC_Y)=\fofr_Y[\dim Y]$, where $\fofr_Y$ is the
orientation sheaf of $Y$ (with respect to the base ring $\bC$).

We denote the singular support of a complex of sheaves $F$ by
$\SS(F) \subset T^*X$.   If $X$ is a real-analytic manifold and
$\Lambda \subset T^*X$ is an $\bR_{\geq 0}$-invariant Lagrangian
subvariety, then $\Sh_c(X;\Lambda)$ (resp. $\Sh_{cc}(X;\Lambda)$)
denotes the full subcategory of $\Sh_c(X)$ (resp. $\Sh_{cc}(X)$)
whose objects have singular support in $\Lambda$.

\subsection{Coherent and quasicoherent sheaves}
\label{sec:coherent}

All schemes and stacks that appear will be over $\bC$.

\subsubsection{Sheaves on a scheme}
If $X$ is a scheme, then we let $\cQ(X)^\naive$ denote the dg category of bounded complexes of quasicoherent sheaves on $X$,
and we let $\cQ(X)$ denote the localization of this category with respect to acyclic complexes. 
If $G$ is an algebraic group acting on $X$, we let $\cQ_G(X)^\naive$ denote the dg category of complexes of
$G$-equivariant quasicoherent sheaves.  We let $\cQ_G(X)$ denote the localization of this category with respect to acyclic
complexes.   We use $\Perf(X) \subset \cQ(X)$ and $\Perf_G(X)\subset \cQ_G(X)$ to denote the full dg subcategories
consisting of \emph{perfect} objects---that is, objects which are quasi-isomorphic to bounded complexes of vector bundles.
If $u:X \to Y$ is a morphism of schemes, we have natural dg functors $u_*:\cQ(X) \to \cQ(Y)$ and $u^*:\cQ(Y) \to \cQ(X)$.
Note that the functor $u^*$ carries $\Perf(Y)$ to $\Perf(X)$. Suppose $G$ and $H$ are algebraic groups, $X$ is a scheme
with a $G$-action, and $Y$ is a scheme with an $H$-action.  If a morphism $u:X \to Y$ is equivariant with respect to
a homomorphism of groups $\phi:G \to H$, then we will often abuse notation and write $u_*$ and $u^*$ for the equivariant pushforward and pullback functors $u_*:\cQ_G(X) \to \cQ_H(Y)$ and $u^*:\cQ_H(Y) \to \cQ_G(X)$.

\subsubsection{Sheaves on a DM stack} \label{sec:sheaves-DM} 
We refer to \cite[Definition 7.18]{Vi} for the definitions of
quasicoherent sheaves, coherent sheaves, and vector bundles on a DM
stack. If $\cX$ is a DM stack, then we let $\cQ(\cX)^\naive$ denote the dg category of bounded
complexes of quasicoherent sheaves on $\cX$, and let $\cQ(\cX)$ denote the localization of this category
with respect to acyclic complexes.
We use $\Perf(\cX) \subset \cQ(\cX)$ to denote the full dg subcategories
consisting of \emph{perfect} objects---that is, objects which are quasi-isomorphic to bounded complexes of vector bundles.

\subsubsection{Sheaves on a global  quotient}\label{sec:global-quotient}
We now spell out the above definitions when $\cX$ is a global quotient.
Let $G$ be an algebraic group acting on a scheme $U$, such that the stabilizers of the geometric
points of $U$ are finite and reduced. By \cite[Example 7.17]{Vi}, the quotient stack
$\cX =[U/G]$ is a DM stack.  By \cite[Example 7.21]{Vi},
the category of coherent sheaves on $\cX$ is equivalent to the category of $G$-equivariant coherent sheaves on $U$.
Similarly, the category of quasicoherent sheaves on $\cX$ is equivalent to the category of
$G$-equivariant quasicoherent sheaves on $U$. Therefore,
$$
\cQ(\cX)^\naive = \cQ_G(U)^{\naive}, \quad \cQ(\cX) = \cQ_G(U),\quad \Perf(\cX)=\Perf_G(U).
$$

Now suppose that, in addition, $G$ is an {\em abelian} group, and
there is an abelian group $\tH$ acting on $U$, such that
the $G$-action on $U$ factors through a group homomorphism
$\phi:G\to \tH$.  Then $\cH=[\tH/G]$ is a Picard stack
acting on $\cX=[U/G]$ in the sense of \cite{FMN}. We define the category of $\cH$-equivariant coherent (resp. quasicoherent) 
sheaves on $\cX$ to be equivalent to the category of $\tH$-equivariant coherent (resp. quasicoherent) 
sheaves on $U$: 
$$
\cQ_{\cH}(\cX) = \cQ_{\tH}(U),\quad \Perf_{\cH}(\cX)=\Perf_{\tilde{H}}(U).
$$

\section{Preliminaries on Toric DM Stacks}
\label{sec:toricDM}

In \cite{BCS}, Borisov, Chen, and Smith defined toric Deligne-Mumford (DM) stacks
in terms of stacky fans. Toric DM stacks are smooth DM
stacks, and their coarse moduli spaces are simplicial toric varieties.
A toric DM stack is called a {\em reduced toric DM stack} (in \cite{BCS})
or a {\em toric orbifold} (in \cite{FMN}) if its generic stabilizer is trivial.
Later, more geometric definitions of toric orbifolds and
toric DM stacks are given by Iwanari \cite{Iw1, Iw2}
and by Fantechi-Mann-Nironi \cite{FMN}, respectively.

\subsection{Stacky fans} \label{sec:stacky}

In this subsection, we recall the definition of stacky
fans. Let $N$ be a finitely generated abelian group, and
let $N_\bR=N\otimes_\bZ\bR$.
We have a short exact sequence of abelian groups:
$$
1\to \Ntor\to N\to \baN=N/\Ntor \to 1,
$$
where $\Ntor$ is the subgroup of torsion elements in 
$N$. Then $\Ntor$ is a finite abelian group, and $\baN\cong \bZ^n$,
where $n=\dim_\bR N_\bR$. 
The natural projection $N\to \baN$ is denoted by $b\mapsto \bab$.

Let $\Si$ be a simplicial fan in $N_\bR$ (see \cite{Fu}), and let 
$\Si(1)=\{\rho_1,\ldots,\rho_r\}$ be
the set of 1-dimensional cones in the fan $\Si$.
We assume that $\rho_1,\ldots, \rho_r$ span $N_\bR$, and 
fix $b_i\in N$ such that $\rho_i=\bR_{\geq 0}\bab_i$. 
A \emph{stacky fan} $\bSi$ is defined as the data
$(N,\Si,\beta)$, where  
$\beta:\tN:=\oplus_{i=1}^r \bZ \tb_i\cong \bZ^r \to N$
is a group homomorphism defined by  $\tb_i\mapsto b_i$.
By assumption, the cokernel of $\beta$ is finite.

We introduce some notation. 
\begin{enumerate}
\item $M=\Hom(N,\bZ)=\Hom(\baN,\bZ)\cong (\bZ^n)^*$. 
\item $\tM=\Hom(\tN,\bZ)\cong (\bZ^r)^*$. 
\item Let $\Si(d)$ be the set of $d$-dimensional
cones in $\Si$. Given $\si\in \Si(d)$, let 
$N_\si\subset N$ be the subgroup generated by
$\{ b_i\mid \rho_i\subset \si\}$, 
and let $\baN_\si$ be the rank $d$ sublattice of $\baN$
generated by $\{\bab_i\mid \rho_i\subset \si\}$. Let
$M_\si=\Hom(\baN_\si,\bZ)$ be the dual lattice of $\baN_\si$.
\end{enumerate}

Given $\si\in \Si(d)$, the surjective group homomorphism $N_\si\to \baN_\si$ induces
an injective group homomorphism $\Hom(\baN_\si,\bZ)\to \Hom(N_\si,\bZ)$
which is indeed an isomorphism. So $\Hom(N_\si,\bZ)\cong M_\si\cong \bZ^d$.

\subsection{The Gale dual} \label{sec:gale}
The finite abelian group $N_{\mathrm{tor}}$ is of 
the form $\oplus_{j=1}^l\bZ_{a_j}$.
We choose a projective resolution of $N$:
$$
0\to \bZ^l\stackrel{Q}{\to }\bZ^{n+l}\to N\to 0.
$$
Choose a map $B:\tN\to \bZ^{n+l}$ lifting
$\beta:\tN \to N$. Let $\pr_1:\tN\oplus \bZ^l\to \tN$
and $\pr_2:\tN\oplus\bZ^l\to \bZ^l$ be projections
to the first and second factors, respectively. We have the
following commutative diagram:
$$
\xymatrix{
    & &
\tN\oplus \bZ^l \ar[r]^{\pr_1}\ar[d]_{B\oplus Q}\ar[dl]_{\pr_2} & \tN \ar[d]^\beta \ar[dl]^B   & \\
0 \ar[r] &  \bZ^l  \ar[r]^Q &  \bZ^{n+l} \ar[r] & N\ar[r] & 0
} 
$$

Define the dual group $DG(\beta)$ to be the the cokernel of 
$B^*\oplus Q^*:(\bZ^{n+l})^*\longrightarrow \tM\oplus (\bZ^l)^*$.
The Gale dual of the map $\beta:\tN\to N$ is $\beta^\vee:\tM\to DG(\beta)$. 

$$
\xymatrix{
& 0 & \\
& DG(\beta)\ar[u] & \\
& \tM\oplus (\bZ^l)^* \ar[u] & \tM \ar[l]_{\pr_1^*}\ar[ul]_{\beta^\vee} \\
\bZ^l \ar[ur]^{\pr_2^*} & (\bZ^{n+l})^* \ar[u]^{ B^*\oplus Q^* }\ar[ur]_{B^*}\ar[l]_{Q^*} & \\
}
$$

\subsection{Construction of the toric DM stack} \label{sec:construct}
We follow \cite[Section 3]{BCS}.
Applying $\Hom(-,\bC^*)$ to $\beta^\vee:\tM\to DG(\beta)$, one obtains
$$
\phi: G_{\bSi}:=\Hom(DG(\beta),\bC^*)\to \tT:=\Hom(\tM,\bC^*).
$$
Let $G=\Ker\phi$. Then $G\cong \prod_{j=1}^l \mu_{a_j}$, where $\mu_{a_j}\subset \bC^*$ is 
the group of $a_j$-th roots of unity, which is isomorphic to $\bZ_{a_j}$. Let $\cB G$ denote the quotient stack $[\{0\}/G]$.
The algebraic torus $\tT$ acts on $\bC^r$ by 
$$
(\tit_1,\ldots, \tit_r)\cdot(z_1,\ldots, z_r)=(\tit_1 z_1,\ldots,\tit_r z_r),\quad
(\tit_1,\ldots, \tit_r)\in \tT,\quad (z_1,\ldots, z_r)\in \bC^r. 
$$
Let $G_{\bSi}$ act on $\bC^r$ by $g\cdot z:=\phi(g)\cdot z$, where $g\in G_{\bSi}$, $z\in \bC^r$.
Let $\cO(\bC^r) = \bC[z_1,\ldots, z_r]$ be the coordinate ring of $\bC^r$. Let $I_\Si$ be the
ideal of $\cO(\bC^r)$ generated by
$$
\{ \prod_{\rho_i \not\subset \si} z_i : \si\in \Si\}
$$
and let $Z(I_\Si)$ be the closed subscheme of $\bC^r$ defined by $I_\Si$.
Then $U:=\bC^r-Z(I_\Si)$ is a quasi-affine variety over $\bC$. The toric DM stack
associated to the stacky fan $\bSi$ is defined to be the quotient stack
$$
\cX_{\bSi}:= [U/G_{\bSi}].
$$
It is a smooth DM stack whose generic stabilizer is $G$, 
and its coarse moduli space is the toric variety $X_\Si$ defined by 
the simplicial fan $\Si$. There is an open dense immersion
$$
\iota: \cT=[ \tT/G_{\bSi} ] \hookrightarrow \cX_{\bSi} = [U/G_{\bSi}],
$$
where $\cT\cong (\bC^*)^n\times \cB G$ is a DM torus.
The action of $\cT$ on itself extends to an action 
$a:\cT\times \cX_{\bSi}\to \cX_{\bSi}$. 

\subsection{Rigidification} \label{sec:rigidification}

We define the {\em rigidification} of $\bSi=(N,\Si,\beta)$ to be the
stacky fan  $\bSi^\rig:=(\bar{N}, \Si, \bar{\beta})$, where
$\bar{\beta}$ is the composition of $\beta:\tN\to N$ with the projection $N\to \baN$.  
Note that $M$, $\baN_\si$, and $M_\si$ defined in Section 
\ref{sec:stacky} depend only on $\bSi^\rig$.
The generic stabilizer of the toric DM stack $\cX_{\bSi^\rig}$
is trivial because $\baN\cong \bZ^n$ is torsion free. So
$\cX_{\bSi^\rig}$ is a toric orbifold.
There is a morphism of stacky fans $\bSi\to \bSi^\rig$
which induces a morphism of toric DM stacks
$r: \cX_{\bSi}\to \cX_{\bSi^\rig}$. The toric orbifold $\cX_{\bSi^\rig}$ is 
called the {\em rigidification} of the toric DM stack $\cX_{\bSi}$.
The morphism $r:\cX_\bSi\to \cX_{\bSi^\rig}$ makes $\cX_\bSi$ 
an abelian gerbe over $\cX_{\bSi^\rig}$.
 
$G_{\bSi^\rig}= G_{\bSi}/G$ is a subgroup 
of $\tT$. Let $T:=\tT/G_{\bSi^\rig}\cong (\bC^*)^n$.
There is an open dense immersion
$$
\iota^\rig: T=[ \tT/G_{\bSi^\rig}]\hookrightarrow \cX_{\bSi^\rig} = [U/G_{\bSi^\rig}].
$$

We have the following statements (cf.  Section \ref{sec:global-quotient}).
\begin{proposition}\label{rigid}
\begin{enumerate}
\item[(a)] (nonequivariant sheaves on toric DM stacks) 
The morphism $r:\cX_{\bSi}\to \cX_{\bSi^\rig}$
induces the following quasi-embeddings of dg categories:
\begin{eqnarray*}
&& r^*: \Perf(\cX_{\bSi^\rig})\cong \Perf_{G_{\bSi^\rig}}(U)\hookrightarrow
\Perf(\cX_{\bSi})\cong \Perf_{G_\bSi}(U),\\
&& r^*: \cQ(\cX_{\bSi^\rig})\cong \cQ_{G_{\bSi^\rig}}(U)\hookrightarrow
\cQ(\cX_{\bSi})\cong \cQ_{G_\bSi}(U).
\end{eqnarray*}
They are quasi-equivalences if and only if $\bSi=\bSi^\rig$.
\item[(b)] (equivariant sheaves on toric DM stacks) We have the following quasi-equivalences of dg categories:
\begin{eqnarray*}
\Perf_T(\cX_{\bSi^\rig})\cong \Perf_\cT(\cX_\bSi)\cong \Perf_{\tT}(U),\\
\cQ_T(\cX_{\bSi^\rig})\cong \cQ_\cT(\cX_\bSi)\cong \cQ_{\tT}(U).
\end{eqnarray*}

\item[(c)] (forgetting the equivariant structure) The forgetful functors
$$
\Perf_{\cT}(\cX_{\bSi})\to \Perf(\cX_{\bSi}),\quad
\cQ_{\cT}(\cX_{\bSi})\to \cQ(\cX_{\bSi})
$$
are essentially surjective if and only if 
$\bSi=\bSi^\rig$.
\end{enumerate}
\end{proposition}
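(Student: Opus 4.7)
The key observation that drives all three parts is that $G\subset G_{\bSi}$ is by construction the kernel of $\phi: G_{\bSi}\to \tT$, so $G$ acts trivially on $U$ (and on $\tT$). Consequently the $G_{\bSi}$-action on $U$ factors through the quotient $G_{\bSi^\rig}=G_{\bSi}/G$, and we have the exact sequence
\[
1\to G\to G_{\bSi}\to G_{\bSi^\rig}\to 1
\]
of abelian groups. The plan is to use this sequence to compare the three categories attached to $\cX_{\bSi}$ and $\cX_{\bSi^\rig}$.

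For part (a), the functor $r^*$ in terms of $U$ is the pullback of equivariant structures along $G_{\bSi}\to G_{\bSi^\rig}$. Since $G$ is a finite abelian group acting trivially on $U$, the category $\cQ_{G_\bSi}(U)$ decomposes as an orthogonal direct sum indexed by the character group $G^\vee=\Hom(G,\bC^*)$:
\[
\cQ_{G_\bSi}(U)\;\simeq\;\bigoplus_{\chi\in G^\vee} \cQ_{G_{\bSi^\rig}}(U)_\chi,
\]
where the $\chi$-summand consists of objects on which $G$ acts by the character $\chi$. (Concretely, any $G_\bSi$-equivariant sheaf $\cF$ decomposes as $\cF=\oplus_\chi \cF_\chi$, with $\cF_\chi$ descending to a $G_{\bSi^\rig}$-equivariant sheaf twisted by $\chi$; this decomposition is preserved by morphisms and by the differential, so it passes to the dg level.) The functor $r^*$ is the inclusion of the summand $\chi=1$, which gives a quasi-embedding that is essentially surjective exactly when $G$ is trivial, i.e.\ when $\bSi=\bSi^\rig$. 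The same argument works for perfect complexes since the summand decomposition preserves perfectness.

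For part (b), I would simply unwind the definition of $\cT$-equivariance from Section \ref{sec:global-quotient}: by definition $\cQ_\cT(\cX_\bSi)=\cQ_{\tT}(U)$ and similarly $\cQ_T(\cX_{\bSi^\rig})=\cQ_{\tT}(U)$, since both $\cT=[\tT/G_\bSi]$ and $T=\tT/G_{\bSi^\rig}$ are realized as quotients of the same $\tT$ acting on $U$. So the three categories in the statement are literally identical, and the same for $\Perf$.

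For part (c), the forgetful functor $\Perf_{\cT}(\cX_\bSi)\to\Perf(\cX_\bSi)$ corresponds to the functor $\Perf_{\tT}(U)\to \Perf_{G_\bSi}(U)$ obtained by restricting the $\tT$-equivariant structure along $\phi:G_\bSi\to\tT$. Because $G=\ker\phi$, the restriction sends $G\subset G_\bSi$ to the identity, so every object in the image has trivial $G$-action; in the decomposition of part (a) it lands in the $\chi=1$ summand, i.e.\ in $r^*\Perf(\cX_{\bSi^\rig})$. Essential surjectivity of the forgetful functor therefore forces the $\chi\neq 1$ summands to vanish, which happens iff $G$ is trivial, iff $\bSi=\bSi^\rig$. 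Conversely, when $\bSi=\bSi^\rig$, essential surjectivity follows from the standard fact that on a toric variety every coherent sheaf (and every perfect complex) admits a torus-equivariant lift, applied to the toric orbifold $\cX_{\bSi^\rig}$.

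The only step that requires any real care is the character decomposition in (a): one must check that it is compatible with the differential, with quasi-isomorphisms, and with the localization, so that it descends from $\cQ^\naive$ to $\cQ$ and restricts to $\Perf$. This is routine because $G$ is finite abelian (so characters span the regular representation) and acts trivially on $U$, but it is the only place where one has to work at the dg level rather than purely set-theoretically; I expect this to be the main technical obstacle, though it is standard.
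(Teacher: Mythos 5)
Since the paper states Proposition \ref{rigid} without supplying a proof, this review assesses your argument on its own terms rather than against a paper proof.

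Your parts (a) and (b) are sound. Because $G=\Ker\phi$ is a finite abelian subgroup of the diagonalizable group $G_{\bSi}$ acting trivially on $U$, the dg category $\cQ_{G_\bSi}(U)$ does decompose into orthogonal character summands indexed by $G^\vee=\Hom(G,\bC^*)$; each $\chi\in G^\vee$ extends to a character of $G_{\bSi}$ (restriction of characters of diagonalizable groups to diagonalizable subgroups is surjective), so the twist equivalences identifying all summands with $\cQ_{G_{\bSi^\rig}}(U)$ exist, and $r^*$ is the inclusion of the $\chi=1$ summand. The decomposition is compatible with differentials, quasi-isomorphisms, localization, and perfection, as you indicate. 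Part (b) is indeed immediate from the definition in Section \ref{sec:global-quotient}, since $\cQ_\cT(\cX_\bSi):=\cQ_{\tT}(U)=:\cQ_T(\cX_{\bSi^\rig})$ on the nose.

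Part (c) is where the gap lies. The ``only if'' direction is fine: the forgetful functor corresponds to restriction along $\phi:G_{\bSi}\to\tT$, whose kernel is $G$, so its essential image sits entirely in the $\chi=1$ summand; if $G\neq 1$ the other summands are nonzero, so the functor cannot be essentially surjective. However, the ``if'' direction invokes a ``standard fact'' that is not a fact: it is \emph{false} that every coherent sheaf, or every perfect complex, on a toric variety (or orbifold) admits a torus-equivariant lift. Concretely take $\bSi=\bSi^\rig$ with $\cX_{\bSi^\rig}=\bP^1$. The skyscraper sheaf $\cO_{[1:1]}$ at a non-fixed point is a perfect complex, but any $T$-equivariant complex of sheaves has $T$-invariant support, so $\cO_{[1:1]}$ has no $T$-equivariant structure (nor is it quasi-isomorphic in the derived category to anything that does). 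Hence the forgetful functor $\Perf_T(\bP^1)\to\Perf(\bP^1)$ is not essentially surjective, and the argument you give does not establish the ``if'' direction. What \emph{is} true, and may be what you had in mind, is that $\cT$-equivariant line bundles generate (Theorem \ref{Lc-generate}), so the triangulated dg subcategory generated by the essential image of the forgetful functor is all of $\Perf(\cX_{\bSi^\rig})$ when $\bSi=\bSi^\rig$; but ``the image generates'' is strictly weaker than ``essentially surjective,'' which is the literal assertion of (c). To close the gap you would need either a different argument for essential surjectivity (which the above counterexample shows cannot succeed as stated) or a reformulation of the conclusion of part (c).
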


\subsection{Lifting the fan}
\label{sec:lift}

Let $\bSi=(N,\Si,\beta)$ be a stacky fan, where $N\cong \bZ^n$.
Let $U$ be defined as in Section \ref{sec:construct}.
The open embedding $U\hookrightarrow \bC^r$ is $\tT$-equivariant, and can
be viewed as a morphism between smooth toric varieties. More explicitly, consider
the $r$-dimensional cone
$$
\tsi_0 =\{ y_1 \tb_1 +\cdots + y_r \tb_r \mid y_1,\ldots, y_r  \in \bR_{\geq 0} \}\subset \tN_\bR =\tN\otimes_\bZ \bR,
$$
and let  $\tSi_0 \subset \tN_\bR$ be the fan which consists of all
the faces of $\tsi_0$. Then $\bC^r$ is the smooth toric variety defined by
the fan $\tSi_0$. We define a subfan $\tSi \subset \tSi_0$ as follows.
Given $\si\in \Si(d)$, such that $\si\cap \{ \bar{b}_1,\ldots, \bar{b}_r\}
=\{ \bar{b}_{i_1},\ldots, \bar{b}_{i_d}\}$, let
$$
\tsi = \{ y_1 \tb_{i_1}+\cdots + y_d \tb_{i_d}\mid y_1,\ldots, y_d\in \bR_{\geq 0}\} \subset \tN_\bR.
$$
Then there is a bijection $\Si\to \tSi$ given by $\si\mapsto \tsi$, and $U$ is the smooth toric
variety defined by $\tSi$.

For any $d$-dimensional cone $\tsi\in \tSi$, let $I=\{ i\mid \rho_i\subset \si\}$, and define
\begin{eqnarray*}
U_\tsi&=& \Spec\bC[\tsi^\vee\cap \tM]= \bC^r-\{\prod_{i\notin I}z_i=0\}\\
&=& \{ (z_1,\ldots,z_r)\in \bC^r\mid z_i \neq 0 \textup{ for }i\notin I\} \cong \bC^d\times (\bC^*)^{r-d},\\
\tT_{\tsi}&=& \{ (\tit_1,\ldots, \tit_r)\in \tT\mid \tit_i=1 \textup{ for } i\notin  I\} \cong (\bC^*)^d.
\end{eqnarray*}
Then $U_\tsi$ is a Zariski open subset of $U$, and $\tT_{\tsi}$ is a subtorus of $\tT$.
The action of $G_{\bSi}$ on $U_\tsi$ gives rise to a stack
$[U_\tsi/G_{\bSi}]$ denoted by $\cX_\si$, which is a substack of $\cX_{\bSi}$.
We have  $\tT$-equivariant open embeddings
$$
\tT \hookrightarrow X_{\tSi}=U \hookrightarrow X_{\tSi_0}=\bC^r.
$$
The $\tT$-equivariant line bundles on $U_\tsi=\Spec\bC[\tsi^\vee\cap \tM]$
are in one-to-one correspondence with characters in $\Hom(\tT_\tsi,\bC^*)$.
Moreover, we have canonical isomorphisms
$$
\Hom(\tT_{\tsi},\bC^*) \cong \tM/(\tsi^\perp\cap \tM) \cong M_\si.
$$
Given $\chi \in M_\si$, let $\cO_{U_\tsi}(\chi)$  denote the  $\tT$-equivariant
line bundle on $U_\tsi$ associated to $\chi\in M_\si$, and
let $\cO_{\cX_\si}(\chi)$ denote the corresponding $\cT$-equivariant line bundle on $\cX_\si= [U_{\tsi}/G_{\bSi}]$.
Let $\tchi\in\tM$ be any representative of the coset
$\chi\in \tM/(\tsi^\perp\cap \tM) \cong M_\si$. Then the $\tT$-weights of
$\Gamma(U_\tsi,\cO_{U_\tsi}(\chi)) =\Gamma(\cX_\si,\cO_{\cX_\si}(\chi))$
are in one-to-one correspondence with points
in $\tchi+(\tsi^\vee\cap \tM)$.

\section{Equivariant line bundles and Twisted Polytopes} \label{sec:twisted}

In this section, we describe equivariant line bundles on a toric DM stack $\cX$
defined by a stacky fan $\bSi=(N, \Si, \beta)$. 

\subsection{Equivariant line bundles}\label{sec:Lc}
Let $U$,  $G_{\bSi}$, $\tSi$  be defined as in Section \ref{sec:toricDM}, 
so that $U=X_\tSi$ and $\cX= [U/G_{\bSi}]$.
For $i=1,\ldots,r$, let $\tD_i\subset U$ (resp. $\tD_i'\subset \bC^r$) 
be the $\tT$-divisor defined by $z_i=0$.  
For any $\vc=(c_1,\ldots, c_r)\in \bZ^r$, let
$$
\tL_{\vc}=\cO_U\Bigl(\sum_{i=1}^r c_i \tD_i\Bigr),\quad
\tL'_{\vc}=\cO_{\bC^r}\Bigl(\sum_{i=1}^r c_i \tD_i'\Bigr).
$$
Then $\tL_{\vc}$ and $\tL_{\vc}'$ are $\tT$-equivariant line bundles
on $U$ and on $\bC^r$, respectively, and $\tL_{\vc}=\tL_{\vc}'\bigr|_U$.
The $\tT$-equivariant line bundle $\tL_{\vc}$ descends to a $\cT$-equivariant
line bundle $\cL_{\vc}$ on $\cX$. Any $\cT$-equivariant line bundle
on $\cX$ is of the form $\cL_{\vc}$ for  some $\vc=(c_1,\ldots,c_r)\in \bZ^r$.

\subsection{Twisted polytopes} \label{sec:twisted-polytope}
In this subsection, we make  the following assumptions on the fan $\Si$. 
\begin{equation}\label{eqn:top}
\begin{aligned}
& \bullet \textup{All the maximal cones in $\Si$ are $n$-dimensional, where $n=\dim_\bR N_\bR$.}\\
& \bullet \textup{We fix a total ordering $C_1,\ldots,C_v$ of the maximal cones in $\Si$.}
\end{aligned}
\end{equation}
Under the above assumptions, we will give an alternative description
of equivariant line bundles on $\cX$. 

For each $C_i$ we have 
$$
\baN_{C_i} \subset \baN \subset N_\bR,
$$
where $\baN_{C_i}\subset \baN$ is a sublattice of finite index.
Then 
$$
M:=\Hom(\baN,\bZ),\quad M_{C_i}=\Hom(\baN_{C_i},\bZ)
$$ 
can be identified with subgroups of $M_\bR$:
\begin{eqnarray*}
M&=& \{ x\in M_\bR\mid \langle x,v\rangle \in \bZ \textup{ for all } v\in \baN \},\\
M_{C_i}&=&\{ x\in M_\bR\mid \langle x, v \rangle \in \bZ \textup{ for all } v\in \baN_{C_i}\},
\end{eqnarray*}
where $\langle \ , \ \rangle: M_\bR\times N_\bR \to \bR$ is the natural pairing.
For each $C_i$ we have
$$
M\subset M_{C_i}\subset M_\bR.
$$

\begin{definition}[twisted polytope]\label{df:twisted-polytope}
Let $\bSi=(N, \Si, \beta)$ be a stacky fan satisfying \eqref{eqn:top}.
A \emph{twisted polytope} for $\bSi$ is an ordered $v$-tuple $\uchi = (\chi_1,\ldots,\chi_v)$, where
$\chi_i \in M_{C_i}$,  with the property that for any $1 \leq i \leq v$ and $1 \leq j \leq v$, the linear forms
$\langle \chi_i,-\rangle$ and $\langle \chi_j,-\rangle$ agree when restricted to $C_i \cap C_j$.
\end{definition}
The terminology is motivated by \cite{KT}.

\begin{lemma}\label{line-bundle-twisted}
Let $\cX$ be the toric DM stack defined
by a stacky fan $\bSi=(N,\Si,\beta)$ satisfying \eqref{eqn:top},
and let $p:\cX \to X$ be the map to the coarse moduli space $X=X_\Si$.
Let $T=p(\cT)\cong (\bC^*)^n$. 
\begin{enumerate}
\item[(a)] For each twisted polytope $\uchi = (\chi_1,\ldots,\chi_v)$ for $\bSi$,
there is up to isomorphism a unique $\cT$-equivariant line bundle
$\cO_{\cX}(\uchi)$ with the property that $\cO_{\cX}(\uchi)\bigr|_{\cX_{C_i}} \cong \cO_{\cX_{C_i}}(\chi_i)$,
where $\cO_{\cX_{C_i}}(\chi_i)$ is defined as in Section \ref{sec:lift}.
\item[(b)] If $\chi_i \in M\subset M_{C_i}$ for $i=1,\ldots, v$ then
there is a $T$-equivariant line bundle $\cO_X(\uchi)$ on the coarse
moduli space $X$ such that $\cO_{\cX}(\uchi)=p^*\cO_X(\uchi)$.
\end{enumerate}
\end{lemma}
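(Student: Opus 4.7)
The plan is to construct $\cO_\cX(\uchi)$ by gluing the local equivariant line bundles $\cO_{\cX_{C_i}}(\chi_i)$ of Section \ref{sec:lift} along the open cover $\{\cX_{C_i}\}_{i=1}^v$ of $\cX$, with the twisted polytope condition furnishing precisely the cocycle data. Since a $\cT$-equivariant line bundle on $\cX=[U/G_\bSi]$ is the same as a $\tT$-equivariant line bundle on $U$, I would work at the level of $U$ with its $\tT$-invariant open cover $\{U_{\tsi_i}\}_{i=1}^v$, where $\tsi_i\in\tSi$ is the lift of the maximal cone $C_i$.

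For part (a), first observe that $U_{\tsi_i}\cap U_{\tsi_j}=U_{\tsi_{ij}}$, where $\tsi_{ij}\in\tSi$ is the common face lifting $C_i\cap C_j$, and that the restriction maps of characters fit into
$$
M_{C_i}\cong \tM/(\tsi_i^\perp\cap\tM)\ \longrightarrow\ \tM/(\tsi_{ij}^\perp\cap\tM)\cong M_{C_i\cap C_j}\ \longleftarrow\ \tM/(\tsi_j^\perp\cap\tM)\cong M_{C_j}.
$$
Choosing lifts $\tchi_i,\tchi_j\in \tM$ of $\chi_i,\chi_j$, the twisted polytope condition $\langle\chi_i,-\rangle=\langle\chi_j,-\rangle$ on $C_i\cap C_j$ translates, via the dual pairing and the identification $\bar\beta(\tb_k)=\bab_k$ for rays of $C_i\cap C_j$, into $\tchi_i-\tchi_j\in \tsi_{ij}^\perp\cap\tM$. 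Thus $z^{\tchi_i-\tchi_j}$ is an invertible regular function on $U_{\tsi_{ij}}$, and multiplication by it defines a $\tT$-equivariant isomorphism
$$
\varphi_{ij}:\ \cO_{U_{\tsi_i}}(\tchi_i)\bigr|_{U_{\tsi_{ij}}}\ \longrightarrow\ \cO_{U_{\tsi_j}}(\tchi_j)\bigr|_{U_{\tsi_{ij}}}.
$$
The cocycle identity on triple overlaps is immediate from $z^{\tchi_j-\tchi_k}\cdot z^{\tchi_i-\tchi_j}=z^{\tchi_i-\tchi_k}$, so the data $(\cO_{U_{\tsi_i}}(\tchi_i),\varphi_{ij})$ glue to a $\tT$-equivariant line bundle on $U$, which via $G_\bSi\subset\tT$ descends to the $\cT$-equivariant line bundle $\cO_\cX(\uchi)$. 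Uniqueness up to isomorphism follows because replacing $\tchi_i$ by $\tchi_i+e_i$ with $e_i\in \tsi_i^\perp\cap\tM$ rescales each $\varphi_{ij}$ by $z^{e_i-e_j}$, i.e., alters the gluing cocycle by a coboundary, producing an isomorphic glued bundle.

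Part (b) is the identical construction on the coarse moduli space $X=X_\Si$ using the affine cover $\{X_{C_i}=\Spec\bC[C_i^\vee\cap M]\}_{i=1}^v$: when all $\chi_i$ lie in $M\subset M_{C_i}$, the twisted polytope condition still gives compatible characters on overlaps and produces a $T$-equivariant line bundle $\cO_X(\uchi)$ on $X$. The morphism $p:\cX\to X$ restricts on each chart to $\cX_{C_i}\to X_{C_i}$ coming from $\bC[C_i^\vee\cap M]\hookrightarrow \bC[\tsi_i^\vee\cap\tM]^{G_\bSi}$, under which the lift $\tchi_i = \chi_i\circ\bar\beta\in \tM$ of $\chi_i$ is simply the pullback of $\chi_i\in M$; hence $p^*\cO_{X_{C_i}}(\chi_i)\cong \cO_{\cX_{C_i}}(\chi_i)$, and these identifications intertwine the respective gluings to yield $p^*\cO_X(\uchi)\cong \cO_\cX(\uchi)$. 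The main subtlety I anticipate is bookkeeping around the lifts $\tchi_i$ and the verification that the canonical identifications are $\tT$-equivariant; in both parts this reduces to the observation that the ambiguity lies in $\tsi_i^\perp\cap\tM$, which is precisely the weight lattice of $\tT$-invariant invertible functions on $U_{\tsi_i}$.
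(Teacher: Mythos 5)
Your proof is correct and follows essentially the same approach as the paper: reduce to gluing $\tT$-equivariant line bundles over the affine cover $\{U_{\tsi_i}\}$ of $U$, using the twisted polytope condition to match the characters on overlaps, and then descend to $\cX$. Your use of explicit lifts $\tchi_i$ and the monomial transition functions $z^{\tchi_i-\tchi_j}$ makes the cocycle condition transparent, whereas the paper invokes canonical isomorphisms $\lambda_{ij}$ more abstractly; both arguments and the treatment of part (b) via \cite[Section 3.4]{Fu} are substantively the same.
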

\begin{proof} (a) For each $i,j \in \{1,\ldots,r\}$,
we define 
$$
C_{ij}= C_i\cap C_j,\quad U_i=U_{\tC_i},\quad U_{ij}= U_{\tC_{ij}}=U_i\cap U_j.
$$
Then $U_{ij} =U_{ji}$, and $U_{ii}=U_i$.
It suffices to show that, there is a unique $\tT$-equivariant
line bundle $\cO_U(\uchi)$ on $U$ with th property that
$\cO_U(\uchi)\bigr|_{U_i}\cong \cO_{U_i}(\chi_i)$, where $\cO_{U_i}(\chi_i)$ is defined
as in Section \ref{sec:lift}.

There is an inclusion $N_{C_{ij}}\to N_{C_i}$ which induces
a surjective map $f_{ij}: M_{C_i}\to M_{C_{ij}}$. Note that
$\langle \chi_i, -\rangle$ and $\langle \chi_j,-\rangle$ agree when
restricted to $C_{ij}$  if and only if $f_{ij}(\chi_i)= f_{ji}(\chi_j)\in M_{C_{ij}}$.
We define $\chi_{ij}= f_{ij}(\chi_i)=f_{ji}(\chi_j)\in M_{C_{ij}}$. Note
that $f_{ii}$ is the identity map and $\chi_{ii}=\chi_i$.
Then $\cO_{U_i}(\chi_i)\bigr|_{U_{ij}}$ is isomorphic to $\cO_{U_{ij} }(\chi_{ij})$
as $\tT$-equivariant line bundles on $U_{ij}$. For each $i, j$, We fix
an isomorphism of $\tT$-equivariant line bundles:
$$
\lambda_{ij}: \cO_{U_i}(\chi_i)\bigr|_{U_{ij}}\stackrel{\sim}{\longrightarrow}
\cO_{U_{ij}}(\chi_{ij}) = \cO_{U_{ji}}(\chi_{ji}).
$$ 
In particular, we take $\lambda_{ii}$ to be the identity map.
For each $i,j$, we define an isomorphism of $\tT$-equivariant line bundles:   
$$
\phi_{ij} =\lambda_{ji}^{-1}\circ \lambda_{ij}: \cO_{U_i}(\chi_i)\bigr|_{U_{ij}}
\stackrel{\sim}{\longrightarrow} \cO_{U_j}(\chi_j)\bigr|_{U_{ji}=U_{ij}}.
$$
Then
\begin{enumerate}
\item[(i)] for each $i$, $\phi_{ii}$ is the identity map, and
\item[(ii)] for each $i, j, k$, $\phi_{ik}=\phi_{jk}\circ \phi_{ij}$ on $U_i\cap U_j\cap U_k$. 
\end{enumerate}
Therefore there exists up to isomorphism a unique $\tT$-equivariant line bundle 
$\cO_U(\uchi)$ on $U$ with isomoprhisms
$\psi_i: \cO_U(\uchi)\bigr|_{U_i}\stackrel{\sim}{\longrightarrow} \cO_{U_i}(\chi_i)$ of $\tT$-equivariant
line bundles such  that $\psi_j =\phi_{ij}\circ\psi_i$ on $U_{ij}$.  

(b) The construction of the $T$-equivariant line
bundle $\cO_X(\uchi)$ on the simplicial toric variety
$X=X_\Si$ from such $\chi_1,\ldots, \chi_r \in M$ is well-known, see
for example \cite[Section 3.4]{Fu}. It 
is clear from construction that $p^*\cO_X(\uchi)= \cO_{\cX}(\uchi)$. 
\end{proof}

 Let $C_1,\ldots, C_v$ be defined as above.
Given $1\leq i_0<\ldots < i_k \leq v$, define
$C_{i_0\cdots i_k}= C_{i_0}\cap \cdots \cap C_{i_k}$.
Given a twisted polytope $\uchi=(\chi_1,\ldots,\chi_v)$ of $\bSi$,
for each $1\leq i_0< \cdots <i_k\leq v$ define $\chi_{i_0\cdots i_k}\in M_{C_{i_0\cdots i_k}}$
to be the image of $\chi_{i_0}\in M_{C_{i_0}}$ under
the group homomorphism $M_{C_{i_0}}\to M_{C_{i_0\cdots i_k}}$. Then 
$$
\{ \chi_{i_0\cdots i_k}\in M_{C_{i_0\cdots i_k}}\mid 1\leq i_0<\cdots <i_k\leq v\}
$$ 
satisfies
the following properties:
\begin{enumerate}
\item  If $1\leq j_0<\cdots <j_l\leq v$ refines $1\leq i_0<\cdots <i_k\leq v$ then
$\chi_{i_0\cdots i_k}\mapsto \chi_{j_0\cdots j_l}$ under the group homomorphism 
$M_{C_{i_0\cdots i_k}}\to M_{C_{j_0\cdots j_l}}$.
\item  $\cO_\cX(\uchi)\bigr|_{\cX_{C_{i_0\cdots i_k}}}\cong 
\cO_{\cX_{C_{i_0\cdots i_k}}}(\chi_{i_0\cdots i_k})$.
\end{enumerate}

\subsection{Equivariant $\bQ$-ample line bundles}
In this subsection we assume the toric DM stack $\cX$ is {\em complete}, i.e.,
it is defined by a stacky fan $\bSi=(N,\Si,\beta)$ where
$\Si$ is a complete fan in $N_\bR$.

Given a twisted polytope $\uchi=(\chi_1,\ldots,\chi_r)$ of $\bSi$ and $n\in \bZ$, let
$n\uchi = (n\chi_1,\ldots, n\chi_r)$. Then $n\uchi$ is a twisted polytope
of $\bSi$, and $\cO_{\cX}(n\uchi)= \cO_{\cX}(\uchi)^{\otimes n}$. 
Given any twisted polytope $\uchi=(\chi_1,\ldots,\chi_r)$ of $\bSi$ there
exists a positive integer $n$ such that
$n\chi_i\in M$ for $i=1,\ldots, v$. Then $n\uchi$ defines
an equivariant line bundle $\cO_X(n\uchi)$ on the coarse moduli space $X=X_\Si$, and
$\cO_{\cX}(\uchi)^{\otimes n}=p^*\cO_{X}(n\uchi)$.

\begin{definition}[$\bQ$-ample] \label{df:Qample}
Let $\cX$ be a complete toric DM stack, and let $p:\cX\to X$ be the morphism to the
coarse moduli space.  We say a line bundle $\cL$ on $\cX$ is {\em $\bQ$-ample}
if there exists a positive integer $n>0$ and an ample line bundle $L$ on $X$ such
that $\cL^{\otimes n} = \pi^*L$.
\end{definition}

\begin{theorem} \label{thm:fulton3}
Let $\cX=\cX_\bSi$ be a complete toric DM stack defined by a stacky fan
$\bSi=(N,\Si,\beta)$, and let $\uchi$ be a twisted polytope of $\bSi$.
The line bundle $\cO_{\cX}(\uchi)$ is $\bQ$-ample precisely when 
$\uchi$ satisfies the following two conditions:
\begin{enumerate}
\item[(i)] The set $\{\chi_1,\ldots,\chi_v\}$ is strictly convex, in the sense that 
its convex hull is strictly larger than the convex hull of any subset $\{\chi_{i_1},\ldots,\chi_{i_w}\}$.
\item[(ii)] The convex hull of $\{\chi_1,\ldots,\chi_v\}$ coincides with the set of all $\xi \in M_\bR$ satisfying
$$
\langle\xi,\gamma\rangle \geq \langle\chi_i,\gamma\rangle \text{ for all $i$ and all $\gamma \in C_i$}.
$$
\end{enumerate}
\end{theorem}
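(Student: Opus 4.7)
The plan is to reduce Theorem \ref{thm:fulton3} to the classical characterization of ample torus-equivariant line bundles on a complete simplicial toric variety \cite[Section 3.4]{Fu} by passing to the coarse moduli space $X_\Si$. The first observation is that conditions (i) and (ii) are invariant under replacing $\uchi$ by $n\uchi$ for any positive integer $n$, because both the convex hull of $\{\chi_i\}$ and the polytope cut out by the inequalities $\langle\xi,\gamma\rangle\geq\langle\chi_i,\gamma\rangle$ scale linearly with $\uchi$, so extremality of each $\chi_i$ and the equality of the convex hull with the polytope are preserved. Since each $\chi_i$ lies in the finite-index overlattice $M_{C_i}$ of $M$ inside $M_\bR$, one can choose $N>0$ with $N\chi_i\in M$ for all $i$, and Lemma \ref{line-bundle-twisted}(b) then gives $\cO_{\cX}(\uchi)^{\otimes N}\cong p^*\cO_X(N\uchi)$.

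Next I would show the equivalence: $\cO_{\cX}(\uchi)$ is $\bQ$-ample if and only if $\cO_X(N\uchi)$ is ample on $X$. The backward implication is immediate from Definition \ref{df:Qample}. For the forward direction, if $\cO_{\cX}(\uchi)^{\otimes n}\cong p^*L$ with $L$ ample, then raising to the $N$-th power yields $p^*L^{\otimes N}\cong p^*\cO_X(N\uchi)^{\otimes n}$; faithfulness of $p^*$ on line bundles pulled back from the coarse moduli space (which follows from $p_*\cO_{\cX}\cong \cO_X$ together with the projection formula) gives $L^{\otimes N}\cong \cO_X(N\uchi)^{\otimes n}$. Its ampleness then forces $\cO_X(N\uchi)$ to be ample on the projective variety $X$.

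Finally I would invoke Fulton's classical theorem \cite[Section 3.4]{Fu}: for integral data $N\chi_i\in M$, the $T$-equivariant line bundle $\cO_X(N\uchi)$ on the complete simplicial toric variety $X_\Si$ is ample if and only if the piecewise linear support function $\psi$ defined by $\psi|_{C_i}=\langle N\chi_i,-\rangle$ is strictly convex, equivalently if and only if the collection $\{N\chi_1,\ldots,N\chi_v\}$ satisfies conditions (i) and (ii). Combined with the scale-invariance established above, this yields the theorem. The main obstacle is the reduction step itself: confirming that $\bQ$-ampleness on the stack translates faithfully to ordinary ampleness on the coarse moduli space after integral rescaling, in particular the injectivity of $p^*$ on the relevant Picard groups. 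Once that reduction is in place, the rest of the argument is a direct translation of the classical strict convexity criterion.
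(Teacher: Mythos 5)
Your proof is correct and follows essentially the same route as the paper's: choose $N$ with $N\chi_i\in M$, use Lemma~\ref{line-bundle-twisted}(b) to pass to $\cO_X(N\uchi)$ on the coarse moduli space, invoke Fulton's strict-convexity criterion, and observe that conditions (i) and (ii) are invariant under rescaling $\uchi\mapsto n\uchi$. The paper treats the equivalence ``$\cO_\cX(\uchi)$ is $\bQ$-ample $\Leftrightarrow$ $\cO_X(N\uchi)$ is ample'' as immediate, whereas you usefully make explicit the injectivity of $p^*$ on $\Pic(X)$ (via $p_*\cO_\cX\cong\cO_X$ and the projection formula) needed for the forward direction.
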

\begin{proof} Let $p:\cX\to X$ be the morphism
to the coarse moduli space $X=X_\Si$.
We have seen that there exists $n\in \bZ_{>0}$ such that $n\uchi$ defines
an equivariant line bundle $\cO_X(n\uchi)$ on the coarse moduli space $X$, and that
$\cO_{\cX}(\uchi)^{\otimes n} =p^*\cO_X(n\uchi)$. By \cite[Section 3.4]{Fu},
$\cO_X(n\uchi)$ is ample if and only if $n\uchi$ satisfies the
above conditions (i) and (ii). The proof is completed by
the following observation: $\uchi$ satisfies conditions (i) and (ii) 
$\Leftrightarrow$ $n\uchi$ satisfies conditions (i) and (ii) for all $n\in \bZ_{>0}$ 
$\Leftrightarrow$ $n\uchi$ satisfies conditions (i) and (ii) for
some $n\in \bZ_{>0}$. 
\end{proof}

\section{Standard Quasicoherent Sheaves and Costandard Constructible Sheaves}
\label{sec:Theta-Theta}

Let $\cX_{\Si}$ be a toric DM stack defined by a stacky fan $\bSi=(N,\Si,\beta)$.
In this section, we introduce a useful class $\{\Theta(\si,\chi)\}$ of constructible sheaves on $M_\bR$,
and a corresponding class $\{\Theta'(\si,\chi)\}$ of $\cT$-equivariant
quasicoherent sheaves on $\cX_\bSi=[U/G_{\bSi}]$, and show that the dg
categories they generate are quasi-equivalent to each other.
These two classes of sheaves are closely related to the class of constructible sheaves
$\{\Theta(\tsi,\chi)\}$ on $\tM_\bR$ and the class of $\tT$-equivariant quasicoherent sheaves
$\{\Theta'(\tsi,\chi)\}$ on the smooth toric variety $U$ introduced in \cite[Section 3]{more}.

\subsection{The poset $\Ga(\bSi)$.}
\label{sec:poset}

\begin{definition}\label{df:preorder}
Let $\bSi=(N,\Si,\beta)$ be a stacky fan. Define
$$
\Ga(\bSi)=\{ (\si,\chi)\mid \si \in \Si, \chi\in M_\si \}.
$$
\begin{enumerate}
\item Given a pair $(\si,\chi)\in \Ga(\bSi)$, where $\si\in \Si(d)$, we have
$$
\si\cap \{\bab_1,\ldots,\bab_r\}=\{ w_1,\ldots,w_d\},
$$
where $\bab_1,\ldots,\bab_r\in \baN \subset N_\bR$ are define
as in Section \ref{sec:stacky}.
\begin{eqnarray*}
\si^\vee_\chi &=&\{ x\in M_\bR \mid \langle x, w_i\rangle \geq \langle \chi, w_i\rangle_\si\}\\
(\si^\vee_\chi)^\circ &=& \{ x\in M_\bR\mid \langle x, w_i\rangle > \langle \chi, w_i\rangle_\si\}\\
\si^\perp_\chi &=& \{ x\in M_\bR \mid \langle x, w_i\rangle =\langle \chi,w_i\rangle_\si \}
\end{eqnarray*}
where  $$
\langle\  ,\ \rangle: M_\bR \times N_\bR \to \bR,\quad
\langle\ , \ \rangle_\si: M_\si\times \baN_\si \to \bZ
$$
are the natural pairings.
\item Give the set  $\Gamma(\bSi)$ of ordered pairs $(\si,\chi)$ a partial order, by setting
$$
(\si_1,\chi_1) \leq (\si_2,\chi_2)
$$
whenever $(\si_1)_{\chi_1}^\vee \subset (\si_2)_{\chi_2}^\vee$.
\item Let $\Gamma(\bSi)_\bC$ be the $\bC$-linear category whose objects are the elements of $\Gamma(\bSi)$, with $\dghom\big( (\si_1,\chi_1),(\si_2,\chi_2)\big)$ a one- or zero-dimensional vector space depending on whether $(\si_1,\chi_1) \leq (\si_2,\chi_2)$, and with the evident composition rule.  We will regard it as a dg category with the Hom complexes concentrated in degree zero.
\end{enumerate}
\end{definition}

It is clear from the definitions that
$$
\Ga(\bSi)=\Ga(\bSi^\rig),\quad \Ga(\bSi)_{\bC}=\Ga(\bSi^\rig)_{\bC}.
$$

The smooth toric variety $U$ is defined by the fan $\tSi\subset\tN_\bR$. In \cite[Section 3.1]{more}, we
define a poset 
$$
\Ga(\tSi, \tM)=\{ (\tsi,\chi)\mid \si\in \tSi, \chi\in \tM/(\si^\perp\cap \tM) \}.
$$
Recall that $\tM/(\si^\perp\cap \tM) \cong M_\si$.
For any $(\tsi_1,\chi_1), (\tsi_2,\chi_2) \in \Ga(\tSi,\tM)$, 
$(\tsi_1,\chi_1)\leq (\tsi_2,\chi_2)$ in $\Ga(\tSi,\tM)$
if and only if $(\si_1,\chi_1)\leq (\si_2,\chi_2)$ in 
$\Ga(\bSi)$. We conclude:

\begin{lemma} \label{poset-updown}
The bijective map $\Ga(\tSi,\tM) \to \Gamma(\bSi)$,
$(\tsi,\chi)\mapsto (\si,\chi)$ is an isomorphism
of posets, and induces a quasi-equivalence of
$dg$ categories
$$
\Ga(\tM,\tSi)_\bC \cong \Ga(\bSi)_\bC.
$$
\end{lemma}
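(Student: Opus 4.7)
The plan is to first establish the set-theoretic bijection $\Ga(\tSi,\tM) \to \Ga(\bSi)$ by combining the bijection $\Si \to \tSi$, $\si \mapsto \tsi$ of Section~\ref{sec:lift} with the canonical isomorphism $\tM/(\tsi^\perp \cap \tM) \cong M_\si$ from the same section. Once the underlying posets are shown to be isomorphic, the quasi-equivalence of dg categories $\Ga(\tM,\tSi)_\bC \cong \Ga(\bSi)_\bC$ follows immediately from their definition as $\bC$-linearizations of the respective posets, with Hom complexes concentrated in degree zero.

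The core of the argument is to show that $(\tsi_1)^\vee_{\chi_1} \subset (\tsi_2)^\vee_{\chi_2}$ in $\tM_\bR$ if and only if $(\si_1)^\vee_{\chi_1} \subset (\si_2)^\vee_{\chi_2}$ in $M_\bR$. I would relate the two containments via the injection $\babeta^* : M_\bR \hookrightarrow \tM_\bR$ dual to the surjection $\babeta_\bR : \tN_\bR \to N_\bR$. Writing $K = \ker(\babeta_\bR)$, the image of $\babeta^*$ is $K^\perp$, and since $\babeta(\tb_i)=\bab_i$ the inequalities defining $(\tsi)^\vee_\chi$ pull back along $\babeta^*$ to those defining $(\si)^\vee_\chi$, giving $\babeta^*((\si)^\vee_\chi) = (\tsi)^\vee_\chi \cap K^\perp$. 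A dimension count using $K \cap \tsi_\bR = 0$ shows $\tsi^\perp + K^\perp = \tM_\bR$, and combined with the $\tsi^\perp$-invariance of $(\tsi)^\vee_\chi$ this yields the key geometric identity
\[
(\tsi)^\vee_\chi = \babeta^*\bigl((\si)^\vee_\chi\bigr) + \tsi^\perp.
\]

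With this identity, the $(\Rightarrow)$ direction follows by intersecting both sides with $K^\perp$ and applying injectivity of $\babeta^*$. The $(\Leftarrow)$ direction requires more care: starting from $(\si_1)^\vee_{\chi_1} \subset (\si_2)^\vee_{\chi_2}$, I would compare recession cones to obtain $\si_1^\vee \subset \si_2^\vee$, whence by cone duality $\si_2 \subset \si_1$, and hence $\tsi_2 \subset \tsi_1$ under the face-preserving bijection of Section~\ref{sec:lift}. This gives $\tsi_1^\perp \subset \tsi_2^\perp$, and then
\[
(\tsi_1)^\vee_{\chi_1} = \babeta^*\bigl((\si_1)^\vee_{\chi_1}\bigr) + \tsi_1^\perp \subset \babeta^*\bigl((\si_2)^\vee_{\chi_2}\bigr) + \tsi_2^\perp \subset (\tsi_2)^\vee_{\chi_2},
\]
using the $\tsi_2^\perp$-invariance of the target set at the last step.

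The main obstacle is precisely this $(\Leftarrow)$ step: the naive move of adding $\tsi_1^\perp$-invariance to both sides fails because $\tsi_1^\perp$ is generally not an invariance direction for $(\tsi_2)^\vee_{\chi_2}$. The key realization is that the hypothesis $(\si_1)^\vee_{\chi_1} \subset (\si_2)^\vee_{\chi_2}$ secretly encodes the face containment $\si_2 \subset \si_1$ via the recession cones, and this combinatorial input is what rescues the argument.
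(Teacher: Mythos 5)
Your proof is correct, and it supplies an argument that the paper itself omits: the text merely asserts that $(\tsi_1,\chi_1)\leq(\tsi_2,\chi_2)$ holds if and only if $(\si_1,\chi_1)\leq(\si_2,\chi_2)$ and then "concludes" the lemma, so there is no written proof to compare against. Your route --- the identity $\chi+\tsi^\vee=\hat{\beta}\bigl(\si^\vee_\chi\bigr)+\tsi^\perp$, obtained from $\hat{\beta}(\si^\vee_\chi)=(\chi+\tsi^\vee)\cap K^\perp$ together with $\tsi^\perp+K^\perp=\tM_\bR$ (using simpliciality of $\Si$), plus the recession-cone observation that the hypothesis in the $(\Leftarrow)$ direction forces $\si_2\subset\si_1$ and hence $\tsi_1^\perp\subset\tsi_2^\perp$ --- is exactly in the spirit of the relation $\si^\vee_\chi=\hat{\beta}^{-1}(\chi+\tsi^\vee)$ underlying Lemma \ref{Theta-updown}, and correctly identifies the one point (the $(\Leftarrow)$ direction) where something actually needs to be checked.
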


\subsection{Costandard sheaves on cones}
\label{sec:Theta}

In this section we introduce constructible sheaves $\Theta(\si,\chi)$ on $M_\bR$, 
indexed by elements of $\Ga(\bSi)$.  They are \emph{costandard} in the sense of \cite{NZ}.

\begin{definition}
Given  $\si \in \Si$ and $\chi \in M_\si$, let $j=j_{(\si^\vee_\chi)^\circ}:
(\si_\chi^\vee)^\circ\hookrightarrow M_\bR$ be the inclusion map, and define
$$
\Theta(\si,\chi) := j_!\omega_{(\si_\chi^\vee)^\circ} \in \Ob(\Sh_c(M_\bR)).
$$
\end{definition}

We recall the definition of similar constructible sheaves
$\Theta(\tsi,\chi)$ on $\tM_\bR$ for the fan $\tSi$ \cite[Definition 3.1]{more}.
Given $\tsi\in \tSi$ and $\chi\in \tM/(\tsi^\perp\cap\tM)\cong M_\si$,
define
$$
\Theta(\tsi,\chi) = \tj_!\omega_{(\chi+\tsi^\vee)^\circ}\in Ob(Sh_c(\tM_\bR)),
$$
where
$(\chi+\tsi^\vee)^\circ$ is the interior of the shifted dual cone
$\chi+\tsi^\vee \subset \tM_\bR$, and
$\tj :(\chi+\tsi^\vee)^\circ\hookrightarrow M_\bR$ is the inclusion.

The group homomorphism $\beta:\tN\to N$ induces
$$
\beta^*:M=\Hom(N,\bZ)=\Hom(\bar{N},\bZ) \longrightarrow \tM=\Hom(\tN,\bZ).
$$ 
Let $\hat{\beta}=\beta^*\otimes\bR :M_\bR\to \tM_\bR$.
Then $\hat{\beta}$ is an injective $\bR$-linear map.
\begin{lemma}\label{Theta-updown}
\begin{equation}\label{eqn:Theta-updown}
\hat{\beta}^! \Theta(\tsi,\chi) = \Theta(\si,\chi).
\end{equation}
\end{lemma}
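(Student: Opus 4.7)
The plan is to exhibit a Cartesian square relating the open shifted dual cones $(\si^\vee_\chi)^\circ \subset M_\bR$ and $(\chi+\tsi^\vee)^\circ \subset \tM_\bR$, and then apply base change for $!$-pullback along an open immersion together with the functoriality $f^!\omega_Y = \omega_X$ of dualizing complexes.

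First I would verify the set-theoretic identity $\hat\beta^{-1}((\chi+\tsi^\vee)^\circ) = (\si^\vee_\chi)^\circ$. Write $\tsi$ as the cone on $\tb_{i_1},\ldots,\tb_{i_d}$ (so that $\si$ is the cone on $\bab_{i_1},\ldots,\bab_{i_d}$), and pick a lift $\tchi\in\tM$ of $\chi\in M_\si \cong \tM/(\tsi^\perp\cap \tM)$. For $x\in M_\bR$, the definition of $\hat\beta = \beta^*\otimes \bR$ gives $\langle \hat\beta(x),\tb_{i_k}\rangle = \langle x, b_{i_k}\rangle = \langle x,\bab_{i_k}\rangle$ (the last equality because pairings against $M_\bR$ factor through $\baN_\bR$), and likewise $\langle \tchi,\tb_{i_k}\rangle = \langle \chi, \bab_{i_k}\rangle_\si$. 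Comparing the defining strict inequalities of the two open sets gives the claim.

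This identity makes the diagram
$$
\xymatrix{
(\si^\vee_\chi)^\circ \ar[d]_{j} \ar[r]^{\hat\beta'} & (\chi+\tsi^\vee)^\circ \ar[d]^{\tj} \\
M_\bR \ar[r]^{\hat\beta} & \tM_\bR
}
$$
Cartesian, with $\tj, j$ open embeddings and $\hat\beta$ together with its restriction $\hat\beta'$ closed embeddings of smooth real manifolds (the image of the injective linear map $\hat\beta$ is a linear subspace of $\tM_\bR$, hence closed). For a closed embedding of smooth manifolds of codimension $c = r-n$, one has $f^!(-) = f^*(-) \otimes \fofr_{X/Y}[-c]$; combined with the trivial commutation $\hat\beta^*\tj_! = j_!(\hat\beta')^*$ of pullback with extension by zero, and the projection formula applied to the pullback of the relative orientation twist, this yields the base change $\hat\beta^!\tj_! \cong j_!(\hat\beta')^!$. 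Applying this to $\omega_{(\chi+\tsi^\vee)^\circ}$ and using $(\hat\beta')^!\omega_{(\chi+\tsi^\vee)^\circ} = \omega_{(\si^\vee_\chi)^\circ}$ (the special case $f^!\omega = \omega$ of dualizing-complex functoriality) gives
$$
\hat\beta^!\Theta(\tsi,\chi) \;=\; \hat\beta^!\tj_!\omega_{(\chi+\tsi^\vee)^\circ} \;=\; j_!(\hat\beta')^!\omega_{(\chi+\tsi^\vee)^\circ} \;=\; j_!\omega_{(\si^\vee_\chi)^\circ} \;=\; \Theta(\si,\chi).
$$

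The only delicate step is the base change, which requires tracking the orientation twist contributed by the closed-embedding formula for $f^!$; since $\hat\beta$ and $\hat\beta'$ share the codimension $c$ and the relative orientation sheaves restrict compatibly along the open embeddings $\tj,j$, the twists cancel and there is no obstruction.
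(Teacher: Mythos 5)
Your proof is correct and follows the same strategy as the paper's: set up the commutative square with $j,\tj$ open embeddings and $\hat\beta,\hat\beta'$ closed embeddings, apply base change to get $\hat\beta^!\tj_! \cong j_!(\hat\beta')^!$, and use $(\hat\beta')^!\omega_{(\chi+\tsi^\vee)^\circ} = \omega_{(\si^\vee_\chi)^\circ}$. You have supplied the two verifications that the paper leaves implicit, namely the set-theoretic identity $\hat\beta^{-1}\big((\chi+\tsi^\vee)^\circ\big)=(\si^\vee_\chi)^\circ$ that makes the square Cartesian (which is needed for the base change to apply), and the derivation of the base change from the closed-embedding formula for $f^!$ together with the commutation of $*$-pullback with extension by zero and the projection formula.
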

\begin{proof} Let
$j:(\si_\chi^\vee)^\circ\to M_\bR$ and $\tj:(\chi+\tsi^\vee)^\circ\to \tM_\bR$ be defined as above,
and let $\hat{\beta}': (\si_\chi^\vee)^\circ \to (\chi+\tsi^\vee)^\circ$ be the restriction
of $\hat{\beta}$. Then $j$, $\tj$ are open embeddings,
$\hat{\beta}$, $\hat{\beta}'$ are closed embeddings, and
$\tj \circ \hat{\beta}' = \hat{\beta}\circ j$.
$$
\hat{\beta}^! \tj_!\omega_{(\chi+\tsi^\vee)^\circ} 
= j_! (\hat{\beta}')^!\omega_{(\chi+\tsi^\vee)^\circ}
= j_!\omega_{(\si_\chi^\vee)^\circ}. 
$$
\end{proof}

\begin{proposition} \label{prop:cone}
For any  $(\si,\phi), (\tau,\psi)\in \Gamma(\bSi)$
$$
\Ext^i(\Theta(\si,\phi),\Theta(\tau,\psi)) =
\begin{cases}
\bC & \text{if $i = 0$ and } \si_\phi^\vee \subset \tau_\psi^\vee \\
0 &\text{otherwise}
\end{cases}
$$
\end{proposition}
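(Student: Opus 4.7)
The strategy is to convert the $\Ext$ computation into a sheaf--cohomology computation on an open convex set via adjunction, and then exploit the convex/conical structure of shifted dual cones. Set $U_\si := (\si_\phi^\vee)^\circ$ and $U_\tau := (\tau_\psi^\vee)^\circ$, each an open convex (hence contractible) subset of $M_\bR$, with open inclusions $j_\si, j_\tau$ into $M_\bR$. Since each $U_\si$ is contractible one has $\omega_{U_\si} \simeq \bC_{U_\si}[n]$ with $n = \dim_\bR M_\bR$, and similarly for $U_\tau$ and for $U_\si \cap U_\tau$. Using the adjunction $(j_{\si!},\,j_\si^*)$ valid for open immersions (where $j_\si^! = j_\si^*$) and open--immersion base change, I obtain
\[
R\Hom\bigl(\Theta(\si,\phi),\,\Theta(\tau,\psi)\bigr) \;\simeq\; R\Gamma\bigl(U_\si;\, k_!\bC_{U_\si \cap U_\tau}\bigr),
\]
where $k: U_\si \cap U_\tau \hookrightarrow U_\si$; the $\omega$--shifts cancel on the two sides.

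Applying $R\Gamma(U_\si;-)$ to the excision triangle $k_!\bC_{U_\si \cap U_\tau} \to \bC_{U_\si} \to (i_Z)_* \bC_Z \to [1]$ with $Z := U_\si \setminus U_\tau$ closed in $U_\si$ reduces matters to analyzing the restriction $\rho: \bC = R\Gamma(U_\si; \bC_{U_\si}) \to R\Gamma(Z; \bC_Z)$. If $\si_\phi^\vee \subseteq \tau_\psi^\vee$, then convexity together with nonempty interiors forces $U_\si \subseteq U_\tau$, so $Z = \emptyset$ and the target vanishes, yielding $R\Gamma(U_\si;\, k_!\bC_{U_\si \cap U_\tau}) \simeq \bC$ in degree zero, as required. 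In the opposite case $Z$ is nonempty and the proposition reduces to showing that $Z$ is contractible, so that $\rho$ is a quasi-iso and the $\Ext$ groups vanish in all degrees.

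For the contractibility I decompose $Z$ by which defining inequality of $U_\tau$ is violated: writing $U_\tau = \{x : \langle x, \bab_i\rangle > c_i,\ i \in I(\tau)\}$ with $c_i := \langle \psi, w_i\rangle_\tau$ and $I(\tau) := \{i : \bab_i \in \tau\}$, one has $Z = \bigcup_{i \in I(\tau)} Z_i$ with $Z_i := \{x \in U_\si : \langle x, \bab_i\rangle \leq c_i\}$. Each $Z_i$, and more generally each finite intersection $\bigcap_{i \in S} Z_i$, is an intersection of convex sets, hence convex and contractible whenever nonempty. By the nerve lemma it therefore suffices to show that the nerve of the cover $\{Z_i\}_{i \in I(\tau)}$ is contractible.

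The main obstacle is this nerve computation, and I expect to resolve it by exhibiting the nerve as a full simplex on $S_{\max} := \{i \in I(\tau) : Z_i \neq \emptyset\}$. Here the assumption that $\si, \tau \in \Si$ are cones of a common simplicial fan enters crucially: the intersection $\si \cap \tau$ is a common face of both, so the rays of $\tau$ outside $\si$ span the complementary face $\tau'$ of $\si \cap \tau$ inside $\tau$, and one checks $\tau' \cap \si = \{0\}$. A Farkas--type argument then produces a direction $v \in \si^\vee$ with $\langle v, \bab_i\rangle < 0$ for every $i \in I(\tau) \setminus I(\si)$; translating any basepoint of $U_\si$ by $tv$ for sufficiently large $t > 0$ stays inside $U_\si$ (whose recession cone is $\si^\vee$) while simultaneously driving all such $\langle \cdot, \bab_i\rangle$ below $c_i$, and a further small adjustment handles the indices in $I(\si) \cap I(\tau)$ where $Z_i \neq \emptyset$. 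This combinatorial--geometric step parallels the corresponding argument for the smooth toric case in \cite[Section 3]{more}, and via Lemmas \ref{poset-updown}--\ref{Theta-updown} it transfers to the present setting since the costandard sheaves $\Theta(\si, \chi)$ are $\hat\beta^!$--pullbacks of their $\tM_\bR$--counterparts and the poset structure of shifted dual cones is preserved.
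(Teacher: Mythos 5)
Your overall strategy---reducing the $\Ext$ computation to sheaf cohomology on $U_\si := (\si_\phi^\vee)^\circ$ via the $(j_{!},j^{*})$ adjunction and base change, then using the excision triangle to boil everything down to the contractibility of $Z = U_\si \setminus U_\tau$---is the same strategy that underlies the proof of \cite[Proposition 3.3(1)]{more}, to which the paper defers entirely. The adjunction identity, the excision step, and the convexity (hence contractibility) of each $Z_i$ and of each nonempty intersection $\bigcap_{i\in S}Z_i$ are all correct. As a small simplification, you do not need the nerve lemma: once you know $\bigcap_{i\in S_{\max}} Z_i$ is nonempty, any point $p$ in it serves as a star center for $Z$ (each $x\in Z$ lies in some convex $Z_i$ containing $p$, so the segment $[x,p]$ stays in $Z$), giving contractibility directly.

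The genuine gap is in the phrase ``a further small adjustment handles the indices in $I(\si)\cap I(\tau)$ where $Z_i\neq\emptyset$.'' A generic $v\in\si^\vee$ strictly negative on the $\bab_i$ with $i\in I(\tau)\setminus I(\si)$ will typically satisfy $\langle v,\bab_j\rangle>0$ for some $j\in S_0 := S_{\max}\cap I(\si)$; then $\langle x_0 + tv, \bab_j\rangle$ grows without bound as $t\to\infty$, eventually violating the upper bound $\leq c_j$ that membership in $Z_j$ requires. Since $t$ must be taken arbitrarily large to handle the $i\in I(\tau)\setminus I(\si)$ constraints, no small perturbation of the basepoint $x_0$ can compensate. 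What you actually need is a direction $v$ lying in the face $\si^\vee\cap \gamma_0^\perp$ (where $\gamma_0$ is the cone generated by $\{\bab_j : j\in S_0\}$) that is still strictly negative on the cone generated by $\{\bab_i : i\in S_{\max}\setminus I(\si)\}$ away from the origin. This is a strictly stronger feasibility statement than the one you invoked; its Farkas certificate analysis reduces to showing that the cone generated by $\{\bab_i : i\in S_{\max}\setminus I(\si)\}$ meets $\si - \gamma_0$ only at the origin, which in turn uses that these two collections of generators form complementary faces of the simplicial cone generated by $\{\bab_k : k\in S_{\max}\}\subset\tau$, together with $\si\cap\tau$ being a common face. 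This is provable, but it is the heart of the combinatorial step, not an afterthought. Finally, the closing remark that the result ``transfers via Lemmas~\ref{poset-updown}--\ref{Theta-updown} since $\Theta(\si,\chi)=\hat\beta^!\Theta(\tsi,\chi)$'' cannot be used as a derivation from \cite{more}: the functor $\hat\beta^!$ is only known to be fully faithful on $\langle\Theta\rangle$ by Theorem~\ref{thm:mainfirst}, which is itself proved from the present proposition, so this would be circular.
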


\begin{proof} The proof is the same as the proof of \cite[Proposition 3.3 (1)]{more}.
\end{proof}

\subsection{Standard equivariant quasicoherent sheaves on $\cX_{\bSi}$}
\label{sec:Theta-prime}

In this section we introduce $\cT$-equivariant quasicoherent sheaves $\Theta'(\si,\chi)$
on the toric DM stack $\cX_{\bSi}=[U/G_\bSi]$, indexed by $\Ga(\bSi)$.
Under the quasi-equivalence $\cQ_\cT(\cX_{\bSi}) \cong \cQ_{\tT}(U)$
they correspond to $\tT$-equivariant quasicoherent sheaves
$\Theta'(\tsi,\chi)$ on $U$, indexed by $\Ga(\tSi,\tM)$.

We first recall the definition of $\Theta'(\tsi,\chi)$
in \cite[Section 3.2]{more}, using the notation in this paper.
Given $\tsi\in \tSi$ and $\chi\in \tM/(\si^\perp\cap \tM)\cong M_\si$, define
$\cO_{U_\tsi}(\chi)$ as in Section \ref{sec:lift}.  Let $\iota_{\tsi}:U_\tsi\to U$
be the open embedding. We define
$$
\Theta'(\tsi,\chi)= \iota_{\tsi*}\cO_{U_\si}(\chi)\in Ob(\cQ_{\tT}(U)).
$$

We now define the sheaves $\Theta'(\si,\chi)$ on $\cX_{\bSi}$.
\begin{definition}
Given $(\si,\chi)\in \bSi$, let $j_\si: \cX_\si\to \cX_\bSi$ be the open embedding, and define
$$
\Theta'(\si,\chi)= j_{\si*}\cO_{\cX_\si}(\chi)\in Ob(\cQ_\cT(\cX_\bSi)).
$$
\end{definition}

\begin{lemma}\label{Theta-prime-updown}
Let $q$ denote the dg functor $\cQ_{\tT}(U) \to \cQ_\cT(\cX_{\bSi})$ (which
is a quasi-equivalence of dg categories). Then
\begin{equation}\label{eqn:Theta-prime-updown}
q(\Theta'(\tsi,\chi))= \Theta'(\si, \chi).
\end{equation}
\end{lemma}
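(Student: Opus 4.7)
The plan is to deduce this essentially tautologically from the definitions, by recognizing that $\iota_\tsi$ and $j_\si$ are ``the same'' open embedding viewed through the identification of $\tT$-equivariant sheaves on $U$ with $\cT$-equivariant sheaves on $\cX_\bSi = [U/G_\bSi]$, and that pushforward commutes with this identification.

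First, I would recall from Section \ref{sec:global-quotient} and Proposition \ref{rigid}(b) how $q$ arises: because $G_\bSi$ acts on $U$ through $\phi: G_\bSi \to \tT$ and $\cT = [\tT/G_\bSi]$, a $\tT$-equivariant quasicoherent sheaf on $U$ is by definition the same datum as a $\cT$-equivariant quasicoherent sheaf on $\cX_\bSi$. The same construction applies to the $G_\bSi$-stable open subvariety $U_\tsi \subset U$ (which is $G_\bSi$-stable because $G_\bSi$ acts through $\tT$ and $U_\tsi$ is $\tT$-stable), yielding a parallel quasi-equivalence $q_\si: \cQ_\tT(U_\tsi) \xrightarrow{\sim} \cQ_\cT(\cX_\si)$ with $\cX_\si = [U_\tsi/G_\bSi]$. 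By the very construction in Section \ref{sec:lift}, $q_\si$ sends $\cO_{U_\tsi}(\chi)$ to $\cO_{\cX_\si}(\chi)$.

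Next, I would verify that the square of open embeddings
\begin{equation*}
\xymatrix{
U_\tsi \ar[r]^{\iota_\tsi} \ar[d] & U \ar[d] \\
\cX_\si \ar[r]^{j_\si} & \cX_\bSi
}
\end{equation*}
is obtained by quotienting the top row by $G_\bSi$, and that the induced square of pushforward functors commutes up to natural isomorphism:
\begin{equation*}
q \circ \iota_{\tsi*} \;\cong\; j_{\si*} \circ q_\si.
\end{equation*}
Concretely, for a $G_\bSi$-invariant affine open $V \subset U$, sections of $\iota_{\tsi*}\cF$ over $V$ are sections of $\cF$ over $V \cap U_\tsi$ with their $\tT$-action; under the equivalence with the quotient stack, these are precisely the sections of $j_{\si*}q_\si(\cF)$ over the open substack $[V/G_\bSi] \subset \cX_\bSi$ with their $\cT$-action. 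Combining these two ingredients,
\begin{equation*}
q(\Theta'(\tsi,\chi)) = q\bigl(\iota_{\tsi*}\cO_{U_\tsi}(\chi)\bigr) = j_{\si*}\, q_\si\bigl(\cO_{U_\tsi}(\chi)\bigr) = j_{\si*}\cO_{\cX_\si}(\chi) = \Theta'(\si,\chi).
\end{equation*}

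The only ``obstacle'' is bookkeeping: one must be careful about what $q$ does at the level of functors rather than merely categories. Since Section \ref{sec:global-quotient} literally \emph{defines} $\cQ_\cT(\cX_\bSi) := \cQ_\tT(U)$, the naturality of $q$ with respect to pushforward along open embeddings is purely formal, and the lemma amounts to the observation that the definitions of $\cO_{\cX_\si}(\chi)$ and of pushforward on the quotient stack were set up precisely so as to be compatible with the $\tT$-equivariant constructions on $U_\tsi$.
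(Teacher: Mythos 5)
Your proposal is correct and takes essentially the same approach as the paper: both set up the $2$-cartesian square with the atlas $U_\tsi \to \cX_\si$ and $U \to \cX_\bSi$ on the vertical arrows and the open embeddings $\iota_\tsi$, $j_\si$ on the horizontal arrows, and both reduce the claim to compatibility of pushforward along the open embedding with the defining identification of $\tT$-equivariant sheaves on $U$ (resp.\ $U_\tsi$) with $\cT$-equivariant sheaves on $\cX_\bSi$ (resp.\ $\cX_\si$). The paper phrases the naturality step as flat base change ($\pi^* j_{\si*} \cong \iota_{\tsi*}\pi_\si^*$) rather than your explicit check on invariant affine opens, but the content is the same.
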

\begin{proof}
We have a 2-cartesian diagram
$$
\begin{CD}
U_\tsi  @>{\iota_\tsi}>>  & U\\
@V{\pi_\si}VV &  @VV{\pi}V \\
\cX_\si @>{j_\si}>> &\cX_{\bSi}.
\end{CD}
$$
where $\iota_\tsi$ and $j_\si$ are open embeddings. 
We need to show $\pi^* \Theta'(\si,\chi)
=\Theta'(\tsi,\chi)$.
$$
\pi^* \Theta'(\si,\chi)
=\pi^* j_{\si *}\cO_{\cX_{\si}}(\chi)
= \iota_{\tsi*}\pi_\si^* \cO_{\cX_{\si}}(\chi)
=\iota_{\tsi*}\cO_{U_{\tsi}}(\chi)
=\Theta'(\tsi,\chi).
$$
\end{proof}

\begin{proposition} \label{prop:stand}
For any  $(\sigma,\phi), (\tau,\psi)\in \Ga(\bSi)$ we have
$$
\Ext^i(\Theta'(\sigma,\phi),\Theta'(\tau,\psi)) = \begin{cases}
\bC & \text{if $i = 0$ and $\sigma_\phi^\vee \subset \tau_\psi^\vee$} \\
0 &\text{otherwise}
\end{cases}
$$
where the Ext group is taken in the category $\cQ_\cT(\cX_\bSi)$.
\end{proposition}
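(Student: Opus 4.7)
The plan is to reduce this computation to the analogous statement for the smooth toric variety $U$ already proved in \cite{more}, just as the proof of Proposition \ref{prop:cone} was inherited from \cite[Proposition 3.3 (1)]{more}.

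First I would invoke Proposition \ref{rigid}(b), which furnishes a quasi-equivalence of dg categories $q:\cQ_{\tT}(U) \xrightarrow{\sim} \cQ_\cT(\cX_\bSi)$, and hence an isomorphism on all Ext groups. By Lemma \ref{Theta-prime-updown}, this quasi-equivalence sends $\Theta'(\tsi,\chi)$ to $\Theta'(\si,\chi)$. Therefore
$$
\Ext^i_{\cQ_\cT(\cX_\bSi)}\bigl(\Theta'(\si,\phi),\Theta'(\tau,\psi)\bigr)
\;\cong\;
\Ext^i_{\cQ_{\tT}(U)}\bigl(\Theta'(\tsi,\phi),\Theta'(\ttau,\psi)\bigr),
$$
with $\tsi,\ttau$ the cones of $\tSi$ corresponding to $\si,\tau\in\Si$ as in Section \ref{sec:lift}.

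Next I would apply the corresponding statement for the smooth toric variety $U$, namely the $\tT$-equivariant half of \cite[Proposition 3.3]{more}: the Ext group $\Ext^i\bigl(\Theta'(\tsi,\phi),\Theta'(\ttau,\psi)\bigr)$ in $\cQ_{\tT}(U)$ is $\bC$ concentrated in degree $0$ when $(\tsi,\phi)\leq(\ttau,\psi)$ in the poset $\Ga(\tSi,\tM)$, and vanishes otherwise. Finally, Lemma \ref{poset-updown} identifies the condition $(\tsi,\phi)\leq(\ttau,\psi)$ in $\Ga(\tSi,\tM)$ with $(\si,\phi)\leq(\tau,\psi)$ in $\Ga(\bSi)$, and the latter is by definition the condition $\si_\phi^\vee\subset\tau_\psi^\vee$. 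Combining these three steps gives the proposition.

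The only potential obstacle is confirming that the shifted dual cones in $\tM_\bR$ used to compute $\Ext$ in \cite{more} match, under the identification $\tM/(\tsi^\perp\cap\tM)\cong M_\si$ and the inclusion $\hat{\beta}:M_\bR\hookrightarrow\tM_\bR$, the shifted dual cones $\si^\vee_\chi\subset M_\bR$ used here; but this matching is exactly the content of Lemma \ref{poset-updown}, which guarantees that the inclusion of shifted duals is preserved on the nose. Since the reduction is entirely formal once these lemmas are in hand, no new calculation is required beyond what appears in \cite{more}.
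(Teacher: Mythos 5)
Your proof is correct and follows essentially the same route as the paper: reduce to the smooth toric variety $U$ via the quasi-equivalence $q:\cQ_{\tT}(U)\to\cQ_\cT(\cX_\bSi)$ together with Lemma \ref{Theta-prime-updown}, invoke \cite[Proposition 3.3(2)]{more}, and translate the condition on shifted dual cones via Lemma \ref{poset-updown}. The only difference is that you make the use of Lemma \ref{poset-updown} explicit, whereas the paper's proof leaves that last translation implicit.
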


\begin{proof}
Applying \cite[Proposition 3.3(2)]{more} to the smooth toric
variety $U$, we obtain the following statement:
\begin{itemize}
\item For any  $(\tsi,\phi), (\tilde{\tau},\psi)\in \Ga(\tSi,\tM)$,
$$
\Ext^i(\Theta'(\tsi,\phi),\Theta'(\tilde{\tau},\psi))
= \begin{cases}
\bC & \text{if $i = 0$ and $\phi+\tsi^\vee \subset \psi+ \tilde{\tau}^\vee$}, \\
0 &\text{otherwise}.
\end{cases}
$$
\end{itemize}
The proposition follows from Lemma \ref{Theta-prime-updown} and the above statement.
\end{proof}

\subsection{Equivalence between categories of $\Theta$-sheaves}
\label{sec:first}

\begin{definition}
Let $\ltr_{\bSi}$, $\ltrp_{\bSi}$, $\ltr_{\tSi}$, $\ltrp_{\tSi}$
be the full triangulated dg subcategories of
$\Sh_c(M_\bR)$, $\cQ_T(\cX_\bSi)$, $\Sh_c(\tM_\bR)$, $\cQ_{\tT}(U)$
generated by
$$
\begin{array}{ll}
\{ \Theta(\si,\chi) \}_{ (\si,\chi)\in \Ga(\bSi) }, 
& \{ \Theta'(\si,\chi)\}_{ (\si,\chi)\in \Ga(\bSi) }, \\ 
\{ \Theta(\tsi,\chi)\}_{ (\si,\chi)\in \Ga(\tSi,\tM) },  
& \{ \Theta'(\tsi,\chi)\}_{ (\si,\chi)\in \Ga(\tSi,\tM)},  
\end{array}
$$
respectively.
\end{definition}

\begin{theorem}\label{thm:mainfirst}
The following square of functors commutes up to natural isomorphism:
\begin{equation}\label{eqn:kappa-updown}
\begin{CD}
\langle\Theta'\rangle_{\tSi} @>{\kappa_{\tSi}}>>  & \langle \Theta\rangle_{\tSi}\\
@V{q}VV &  @V{\hat{\beta}^!}VV \\
\langle\Theta'\rangle_{\bSi} @>{\kappa_{\bSi}}>> & \langle \Theta\rangle_{\bSi}
\end{CD}
\end{equation}
where $\hat{\beta}^!$ is given by \eqref{eqn:Theta-updown},
$q$ is given by \eqref{eqn:Theta-prime-updown}, and
$\kappa_\tSi$ and $\kappa_{\bSi}$ are given by
$$
\kappa_\tSi(\Theta'(\tsi,\chi)) = \Theta(\si,\chi),\quad
\kappa_\bSi(\Theta'(\si,\chi))=\Theta(\si,\chi).
$$
Moreover, the dg functors $\hat{\beta}^!$, $q$, $\kappa_{\tSi}$,
$\kappa_{\bSi}$ in the above diagram \eqref{eqn:kappa-updown}
are quasi-equivalences of triangulated dg categories.
\end{theorem}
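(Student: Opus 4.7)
The plan is to exploit the fact that all four generating sets carry the same Hom structure, namely that of the $\bC$-linear poset category $\Gamma(\bSi)_\bC$ from Definition \ref{df:preorder}, and then propagate the commutativity of the square from the generating sets to their triangulated envelopes.

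First I would verify commutativity on the generating set. The two composites send $\Theta'(\tsi,\chi)$ to
\[
\hat{\beta}^! \kappa_\tSi(\Theta'(\tsi,\chi)) = \hat{\beta}^!(\Theta(\tsi,\chi)) = \Theta(\si,\chi),
\]
by the definition of $\kappa_\tSi$ and Lemma \ref{Theta-updown}, and to
\[
\kappa_\bSi q(\Theta'(\tsi,\chi)) = \kappa_\bSi(\Theta'(\si,\chi)) = \Theta(\si,\chi),
\]
by Lemma \ref{Theta-prime-updown} and the definition of $\kappa_\bSi$. So the two composites agree on objects of the generating collection.

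Next I would argue that each of the four functors restricts to a quasi-equivalence on the subcategories in question. The core input is that each of the four full dg subcategories on the generating collections has Hom complexes concentrated in degree zero, of dimension one or zero, depending only on whether the associated shifted dual cones are nested. For the $\bSi$-side this is Propositions \ref{prop:cone} and \ref{prop:stand}; for the $\tSi$-side this is \cite[Proposition 3.3]{more} applied to the smooth toric variety $U$. Combined with the poset identification from Lemma \ref{poset-updown}, all four subcategories are thus quasi-isomorphic to $\Gamma(\bSi)_\bC$ via the object bijection induced by $(\si,\chi)\mapsto \Theta(\si,\chi)$ or $\Theta'(\si,\chi)$. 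This already forces $\kappa_\tSi$ and $\kappa_\bSi$ to be quasi-equivalences of triangulated envelopes. For $q$, Proposition \ref{rigid}(b) supplies a quasi-equivalence $\cQ_{\tT}(U) \simeq \cQ_\cT(\cX_\bSi)$ at the ambient level, and Lemma \ref{Theta-prime-updown} shows it matches generating sets, so $q$ restricts to a quasi-equivalence $\ltrp_\tSi \to \ltrp_\bSi$. For $\hat{\beta}^!$ I have two options: either check directly, using that $\hat\beta$ is a closed embedding, that pullback induces isomorphisms on the one-dimensional Hom spaces between the $\Theta(\tsi_i,\chi_i)$, or simply note that $\hat\beta^!$ agrees on generators with the composite $\kappa_\bSi\circ q\circ \kappa_\tSi^{-1}$ and is therefore a quasi-equivalence by two-out-of-three.

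Finally, since all four generating subcategories have Hom complexes concentrated in degree zero, any two dg functors between their triangulated envelopes that agree up to isomorphism on each generator admit an essentially unique promotion to a natural isomorphism on the whole envelope. This upgrades the object-level agreement of Step one to the desired natural isomorphism $\hat{\beta}^!\circ \kappa_\tSi \simeq \kappa_\bSi \circ q$.

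The main obstacle is the administrative task of lifting object-level equalities to genuine dg natural isomorphisms, with coherent scalar choices. Because every nonzero Hom complex on generators is one-dimensional in degree zero, there is an essentially canonical choice of lift at each stage, and the coherence checks are formal. The substantive mathematical content has already been assembled in Lemmas \ref{Theta-updown} and \ref{Theta-prime-updown} and in Propositions \ref{prop:cone}, \ref{prop:stand}, and \ref{rigid}.
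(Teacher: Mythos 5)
Your proposal is correct and follows essentially the same route as the paper: the paper's proof simply observes that Propositions \ref{prop:cone}, \ref{prop:stand}, and \cite[Proposition 3.3]{more} identify all four categories with $\Tr(\Ga(\bSi)_\bC)\cong\Tr(\Ga(\tSi,\tM)_\bC)$, which is exactly the mechanism you spell out in more detail. The extra care you take in lifting the object-level agreement on generators to a natural isomorphism of functors is left implicit in the paper but is the right thing to check.
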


\begin{proof} This follows from  Proposition \ref{prop:cone}, Proposition \ref{prop:stand},
and \cite[Proposition 3.3]{more}. The
four categories in the above diagram \eqref{eqn:kappa-updown} are quasi-equivalent to
$$
\Tr(\Ga(\bSi)_\bC)\cong \Tr( \Ga(\tSi,\tM)_\bC ).
$$
\end{proof}

\subsection{Coherent-constructible dictionary---line bundles}
\label{sec:ccc-line}

Let $\Perf_\cT(\cX_\bSi)$ denote the dg category of perfect
complexes of $\cT$-equivariant coherent sheaves on $X_\bSi$.
The dg functor $\cQ_{\tT}(U)\to \cQ_\cT(\cX_{\bSi})$ restricts
to the dg functors
$$
\Perf_{\tT}(U)\to \Perf_\cT(\cX_{\bSi}),\quad
\langle\Theta'\rangle_{\tSi}\to \langle \Theta'\rangle_{\bSi}
$$
which are quasi-equivalences of triangulated dg categories. The proof of
\cite[Corollary 3.5]{more} shows that
$\Perf_{\tT}(U)\subset \langle \Theta'\rangle_{\tSi}$. Therefore
$\Perf_\cT(\cX_\bSi)\subset \langle\Theta'\rangle_{\bSi}$, and we have:
\begin{corollary} \label{cor:perfect}
The functor $\kappa_{\bSi}$ defines a full embedding of
$\Perf_\cT(\cX_\bSi)$ into $\Sh_c(M_\bR)$.
\end{corollary}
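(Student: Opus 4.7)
The proof plan is essentially a direct assembly of statements already established in the excerpt, so it will be short. The structure is to reduce the question to the smooth toric variety $U$ (where the CCC is already proved in \cite{more}) and then transport the conclusion along the quasi-equivalences set up in Proposition \ref{rigid}(b) and Theorem \ref{thm:mainfirst}.

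First I would invoke the paragraph immediately preceding the corollary: by Proposition \ref{rigid}(b), the natural functor $\cQ_{\tT}(U) \to \cQ_{\cT}(\cX_{\bSi})$ restricts to a quasi-equivalence $\Perf_{\tT}(U) \to \Perf_{\cT}(\cX_{\bSi})$, and by Lemma \ref{Theta-prime-updown} it also restricts to a quasi-equivalence $\langle\Theta'\rangle_{\tSi} \to \langle\Theta'\rangle_{\bSi}$. The point is that both restrictions come from the \emph{same} underlying functor on quasicoherent sheaves, so inclusions on the $\tSi$ side transport to inclusions on the $\bSi$ side.

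Next I would cite \cite[Corollary 3.5]{more}, which applies to the smooth toric variety $U=X_{\tSi}$ and gives the containment $\Perf_{\tT}(U) \subset \langle\Theta'\rangle_{\tSi}$ inside $\cQ_{\tT}(U)$. Combining this with the previous step, we obtain $\Perf_{\cT}(\cX_{\bSi}) \subset \langle\Theta'\rangle_{\bSi}$ inside $\cQ_{\cT}(\cX_{\bSi})$.

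Finally I would apply Theorem \ref{thm:mainfirst}: the functor $\kappa_{\bSi}:\langle\Theta'\rangle_{\bSi} \to \langle\Theta\rangle_{\bSi}$ is a quasi-equivalence of triangulated dg categories, and $\langle\Theta\rangle_{\bSi}$ sits inside $\Sh_c(M_\bR)$ by construction. Restricting $\kappa_{\bSi}$ to the full subcategory $\Perf_{\cT}(\cX_{\bSi}) \subset \langle\Theta'\rangle_{\bSi}$ therefore yields a full (and faithful, up to quasi-isomorphism) embedding into $\Sh_c(M_\bR)$. There is no genuine obstacle: every ingredient has already been established, and the only thing to verify is that the four quasi-equivalences and inclusions are mutually compatible, which follows tautologically from their definitions in terms of $j_{\sigma*}\cO_{\cX_\si}(\chi)$ and $\iota_{\tsi*}\cO_{U_\tsi}(\chi)$ via the 2-Cartesian diagram used in the proof of Lemma \ref{Theta-prime-updown}.
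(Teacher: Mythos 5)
Your proposal reconstructs the paper's own argument almost verbatim: the paper's proof is precisely the paragraph preceding the corollary, which notes that the functor $\cQ_{\tT}(U)\to \cQ_\cT(\cX_{\bSi})$ restricts to quasi-equivalences $\Perf_{\tT}(U)\to\Perf_\cT(\cX_\bSi)$ and $\langle\Theta'\rangle_{\tSi}\to\langle\Theta'\rangle_{\bSi}$, invokes \cite[Corollary 3.5]{more} for $\Perf_{\tT}(U)\subset\langle\Theta'\rangle_{\tSi}$, transports this to $\Perf_\cT(\cX_\bSi)\subset\langle\Theta'\rangle_{\bSi}$, and then applies Theorem \ref{thm:mainfirst}. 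Your write-up is a more explicit version of the same chain of reductions, correctly pinpointing where each ingredient comes from.
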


Thus to each vector bundle we can associate a complex of sheaves on $M_\bR$.
For the rest of this section we assume that $\Si$ is {\em complete}, and we investigate
this association in more detail for line bundles. Given a twisted polytope
$\uchi=(\chi_1,\ldots,\chi_v)$ of $\bSi$, defined as in Section \ref{sec:twisted-polytope},
define 
$$
\{\chi_{i_0\cdots i_k}\in M_{C_{i_0\cdots i_k}}\mid 1\leq i_0<\cdots <i_k\leq v\}
$$
as in the last paragraph of Section \ref{sec:twisted-polytope}.
Then whenever $1\leq j_0 < j_1 < \ldots < j_\ell\leq v$ refines 
$1\leq i_0 < \ldots< i_k\leq v$, we have a well-defined inclusion map
$\Theta(C_{i_0 \ldots i_k},\chi_{i_0 \ldots i_k}) \hookrightarrow \Theta(C_{j_0 \cdots j_\ell},\chi_{j_0 \ldots j_\ell})$.

\begin{definition}\label{Puchi}
For each twisted polytope $\uchi$, let $P(\uchi)\in Sh_c(M_\bR)$ be the following 
cochain complex
\begin{equation}\label{eqn:Puchi}
\bigoplus_{i_0} \Theta(C_{i_0},\chi_{i_0})
\to
\bigoplus_{i_0 < i_1} \Theta(C_{i_0 i_1},\chi_{i_0 i_1})
\to
\cdots
\end{equation}
where the differential is the alternating sum of inclusion maps.  
\end{definition}

Naively, the first term $\bigoplus_{i_0} \Theta(C_{i_0},\chi_{i_0})$
of \eqref{eqn:Puchi}  would be in degree zero, but because 
$\Theta(\si,\chi)=j_!\omega_{(\si_\chi^\vee)^\circ}
=j_!\fofr_{(\si_\chi^\vee)^\circ}[\dim M_\bR]$, 
$P(\uchi)$ is isomorphic to a complex of sheaves whose first term  is
in degree $-\dim M_\bR$.

\begin{theorem}\label{thm:algebraic}
Let $\cX=\cX_{\bSi}$ be a complete toric DM stack defined
by a stacky fan $\bSi$. Let $\cO_{\cX}(\uchi)$ denote the $\cT$-equivariant
line bundle on $\cX$ associated to  a twisted polytope $\uchi$ of $\bSi$, 
and let $P(\uchi)\in \Sh_c(M_\bR)$  be as in Definition \ref{Puchi}. Then:
\begin{enumerate}
\item $\kappa_{\bSi}(\cO_{\cX}(\uchi))\cong P(\uchi)$.
\item  Denote the convex hull of $\{\chi_1,\ldots,\chi_v\}$ by $\bfP$ and its interior by $\bfP^\circ$.
If $\cO_{\cX}(\uchi)$ is ample, then $P(\uchi)\cong j_!\omega_{\bfP^\circ}$, where
$j: \bfP^\circ \hookrightarrow M_\bR$ is the inclusion map.
Therefore the embedding functor $\kappa_{\bSi}:\Perf_{\cT}(\cX_{\bSi})\hookrightarrow
\Sh_c(M_\bR)$ maps the sheaf $\cO_{\cX_{\bSi}}(\uchi)$ to  $j_!\omega_{\bfP^\circ}$,
the costandard constructible  sheaf on $\bfP^\circ$.
\end{enumerate}
\end{theorem}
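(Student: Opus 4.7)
I would establish a $\cT$-equivariant Čech resolution of $\cO_\cX(\uchi)$ with respect to the open cover $\cX = \bigcup_{i=1}^v \cX_{C_i}$ by the toric charts associated to the maximal cones (which cover $\cX$ since $\Si$ is complete). By Lemma \ref{line-bundle-twisted}(a) and the discussion at the end of Section \ref{sec:twisted-polytope}, the restriction $\cO_\cX(\uchi)\bigr|_{\cX_{C_{i_0\cdots i_k}}}$ is canonically identified with $\cO_{\cX_{C_{i_0\cdots i_k}}}(\chi_{i_0\cdots i_k}) = \Theta'(C_{i_0\cdots i_k}, \chi_{i_0\cdots i_k})$. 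Hence the Čech resolution, as an object of $\ltrp_\bSi$, has the form
\[
\bigoplus_{i_0} \Theta'(C_{i_0},\chi_{i_0}) \to \bigoplus_{i_0 < i_1}\Theta'(C_{i_0 i_1},\chi_{i_0 i_1}) \to \cdots
\]
with differentials the alternating sums of restriction maps. Applying the quasi-equivalence $\kappa_\bSi$ from Theorem \ref{thm:mainfirst} produces a complex with terms $\Theta(C_{i_0\cdots i_k},\chi_{i_0\cdots i_k})$; by Propositions \ref{prop:cone} and \ref{prop:stand} the Hom spaces between consecutive terms are one-dimensional, so $\kappa_\bSi$ transports each restriction map to a scalar multiple of the inclusion $\Theta(C_S,\chi_S)\hookrightarrow \Theta(C_T,\chi_T)$, with signs arranged to match the alternating Čech sign convention; this produces precisely $P(\uchi)$.

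\textbf{Plan for (2).} Under the $\bQ$-ampleness hypothesis, Theorem \ref{thm:fulton3} gives $\bfP^\circ = \bigcap_i V_i$, where $V_i := (C_{i,\chi_i}^\vee)^\circ$. The open inclusions $\bfP^\circ \subset V_i$ yield inclusion maps of costandard sheaves $\alpha_i : j_!\omega_{\bfP^\circ} \hookrightarrow j_{V_i !}\omega_{V_i} = \Theta(C_i,\chi_i)$. Their sum $\alpha := (\alpha_i)$ satisfies the Čech relation $\partial_0\circ \alpha = 0$ (both $\alpha_i$ and $\alpha_j$ factor through $j_!\omega_{\bfP^\circ} \hookrightarrow \Theta(C_{ij},\chi_{ij})$), giving a morphism of complexes $\alpha : j_!\omega_{\bfP^\circ} \to P(\uchi)$. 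I would show $\alpha$ is a quasi-isomorphism by computing stalks. For $x \in M_\bR$, the compatibility $S\subset T \Rightarrow V_S \subset V_T$ makes $K'(x) := \{T \subseteq \{1,\ldots,v\} \text{ nonempty}: x \notin V_T\}$ a downward-closed simplicial complex on $\{1,\ldots,v\}$, and the stalk $P(\uchi)_x$ is identified with the relative simplicial cochain complex $C^*(\Delta^{v-1},K'(x))$ (shifted by $-\dim M_\bR$). For $x\in\bfP^\circ$ one has $K'(x)=\emptyset$, so the stalk complex reduces to $C^*(\Delta^{v-1})$ with cohomology $\bC$ in the correct degree, matching $\omega_{\bfP^\circ,x}$.

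\textbf{Main obstacle.} The heart of the matter is contractibility of $K'(x)$ for $x\notin\bfP^\circ$, which will give acyclicity of $P(\uchi)_x$ there. The plan is to realize $K'(x)$ as the nerve of the closed polyhedral cover $\{C_i \cap (N^-(x)\setminus\{0\})\}_{i=1}^v$ of $N^-(x)\setminus\{0\}$, where $N^-(x) := \{\gamma \in N_\bR : \langle x,\gamma\rangle \leq h_\uchi(\gamma)\}$ and $h_\uchi(\gamma) := \langle\chi_i,\gamma\rangle$ on $C_i$. Under ampleness, Theorem \ref{thm:fulton3} identifies $h_\uchi(\gamma) = \inf_{\xi\in\bfP}\langle\xi,\gamma\rangle$, which is concave, so $f_x(\gamma) := \langle x,\gamma\rangle - h_\uchi(\gamma)$ is convex and $N^-(x)$ is a convex cone. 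The crucial new input is pointedness: if $\gamma, -\gamma \in N^-(x)$ with $\gamma \neq 0$, convexity of $f_x$ and $f_x(0)=0$ force $h_\uchi(\gamma) + h_\uchi(-\gamma) = 0$, contradicting positivity of the width of $\bfP$ in every nonzero direction—a consequence of the full-dimensionality of $\bfP$ that follows from $\bQ$-ampleness. Thus $N^-(x)\setminus\{0\}$ and each intersection $C_T \cap (N^-(x)\setminus\{0\})$ is a pointed convex cone minus its apex, hence empty or contractible, so the nerve theorem gives $K'(x) \simeq N^-(x)\setminus\{0\} \simeq \mathrm{pt}$. The most delicate step is invoking the nerve theorem for a closed polyhedral cover; this can be handled either by a thickening to an appropriate open cover of a neighborhood of $N^-(x)\setminus\{0\}$ or by appealing to the regular-closed-cover version.
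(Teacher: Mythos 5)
Your plan for (1) is exactly the paper's proof: apply $\kappa_\bSi$ to the $\cT$-equivariant \v{C}ech resolution of $\cO_\cX(\uchi)$ with respect to the cover by the $\cX_{C_i}$; the terms are $\Theta'$-sheaves by Lemma \ref{line-bundle-twisted} and the discussion at the end of Section \ref{sec:twisted-polytope}, and Theorem \ref{thm:mainfirst} plus the one-dimensionality of Hom spaces (Propositions \ref{prop:cone} and \ref{prop:stand}) identifies the image with $P(\uchi)$ up to a harmless rescaling.

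For (2) the paper simply defers to a ``minor modification'' of the proof of \cite[Theorem 3.7]{more}, which follows the same overall strategy you outline: produce a natural map $j_!\omega_{\bfP^\circ}\to P(\uchi)$ and verify it is a stalkwise quasi-isomorphism by a convexity argument. Your execution is correct and reasonably complete. The key observations are all right: under $\bQ$-ampleness $h_\uchi$ is the support function $\min_{\xi\in\bfP}\langle\xi,\gamma\rangle$, hence concave, so $N^-(x)$ is a convex cone; strict convexity and completeness make $\bfP$ full-dimensional, forcing positive width in every direction and hence pointedness of $N^-(x)$; and $K'(x)$ is indeed the nerve of the closed cover $\{C_i\cap(N^-(x)\setminus\{0\})\}_i$, since $x\notin V_T$ is equivalent to $C_T\cap(N^-(x)\setminus\{0\})\neq\emptyset$ (a short positive-combination argument, which you should spell out). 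Two small points to tighten. First, you should justify $\bfP^\circ=\bigcap_i V_i$: Theorem \ref{thm:fulton3}(ii) gives $\bfP=\bigcap_i (C_i)_{\chi_i}^\vee$, and since $\bfP$ is full-dimensional the interior of the finite intersection of closed half-spaces equals the intersection of their interiors. Second, the nerve lemma for a closed cover is the one genuinely delicate step, and the parenthetical hand-wave should be replaced by something concrete. Since all the pieces are closed convex polyhedral cones minus the apex, a clean way is to intersect everything with a sphere $S$ around $0$: the sets $C_i\cap N^-(x)\cap S$ form a finite closed cover of the convex spherical polytope $N^-(x)\cap S$ by convex spherical polytopes, all intersections empty or contractible (indeed convex), and this cover admits a common polyhedral triangulation, to which the simplicial nerve theorem applies. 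Alternatively cone-to-ray radial deformation retracts the whole configuration to the sphere, and one may then thicken to an open good cover. Either way, the reduction to a bona fide good cover should be made explicit rather than merely asserted.
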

\begin{proof} To see (1), apply $\kappa_\bSi$ to the \v{C}ech resolution of  $\cO_{\cX}(\uchi)$
(cf. the proof of \cite[Corollary 3.5]{more}).
The proof of (2) is a minor modification of the proof of
\cite[Theorem 3.7]{more}.
\end{proof}

\subsection{Morphisms between toric DM stacks}
\label{sec:mor}

Following \cite[Remark 4.5]{BCS}, we introduce the following definition.

\begin{definition}
Let $\bSi_1 =(N_1,\Si_1,\beta_1)$ and $\bSi_2 =(N_2, \Si_2,\beta_2)$ 
be stacky fans. A {\em morphism}
$f:\bSi_1\to \bSi_2$ is a group homomorphism
$f:N_1\to N_2$ such that
\begin{itemize}
\item  For any cone $\si_1\in \Si_1$ there exists a cone $\si_2\in \Si_2$
such that $f_\bR(\si_1)\subset \si_2$, where $f_\bR = f\otimes_\bZ \bR: N_{1,\bR} \to N_{2,\bR}$.
\item If $\si_1\in \Si_1$, $\si_2\in \Si_2$, and $f_\bR(\si_1)\subset \si_2$,
then $f(N_{\si_1}) \subset N_{\si_2}$, where $N_{\si_i}$ is the subgroup
of $N_i$ defined as in Section \ref{sec:stacky}.
\end{itemize}
\end{definition}

A morphism $f:\bSi_1\to \bSi_2$ induces (see \cite[Remark 4.5]{BCS})
\begin{itemize}
\item a map $\cT_1 \to \cT_2$ 

\item a map $u_{f,\sigma_1,\sigma_2}:\cX_{\sigma_1} \to \cX_{\sigma_2}$ for
a pair of cones $\sigma_1$, $\sigma_2$ such that
$f_\bR(\sigma_1)\subset \sigma_2$.
\item a map $u=u_f:\cX_1=\cX_{\bSi_1}\to \cX_2= \cX_{\bSi_2}$ assembled from
$u_{f,\sigma_1,\sigma_2}$, which extends the map $\cT_1\to \cT_2$,
and is equivariant; we have the following 2-cartesian diagram:
$$
\xymatrix{
\cX_1\times \cT_1 \ar[r]^{(u, u|_{\cT_1})}  \ar[d]_{a_1}  &
\cX_2\times \cT_2 \ar[d]_{a_2} \\
\cX_1  \ar[r]^u & \cX_2
}
$$
where $a_i$ is the $\cT_i$-action on $\cX_i$.
\item a linear map $v=v_f: M_{2,\bR}\to M_{1,\bR}$ of real vector spaces.
\end{itemize}

\begin{remark}
In \cite{Iw1}, I. Iwanari established an equivalence between the 2-category
of toric stacks and the 1-category of stacky fans. In  
\cite[Definition 2.1]{Iw1}, $N$ is a free abelian group, so toric stacks
in \cite{Iw1} are toric orbifolds. 
\end{remark}

When the source $\cX_1$ is a {\em complete} toric DM stack, i.e.,
the coarse moduli space $X_1$ of $\cX_1$ is a complete
simplicial toric variety, F. Perroni gave a description of morphisms $\cX_1\to \cX_2$ in terms of
homogeneous polynomials \cite[Theorem 5.1]{Pe}. This description is similar to
Cox's description of morphisms from a complete toric variety to
a smooth toric variety \cite[Theorem 3.2]{Co}.

Suppose that $\cX_i =[U_i/G_{\bSi_i}]$, where $U_i = \bC^{r_i}-Z(I_{\Si_i})$.
By \cite[Theorem 5.1]{Pe}, there exists a map
$$
F: \bC^{r_1}\to \bC^{r_2},\quad
z=(z_1,\ldots,z_{r_1})\mapsto (P_1(z),\ldots, P_{r_2}(z))
$$
where $P_1,\ldots, P_{r_2}\in \cO(\bC^{r_1})=\bC[z_1,\ldots,z_{r_1}]$ are
homogeneous polynomials, such that
\begin{itemize}
\item $F(U_1) \subset U_2$
\item the following diagram is 2-commutative
$$
\xymatrix{
U_1 \ar[r]^{\tu}  \ar[d]  & U_2 \ar[d] \\
\cX_1  \ar[r]^u & \cX_2
}
$$
where $\tu$ is the restriction of $F$, and
the vertical arrows are the quotient maps.
\end{itemize}
Moreover, $\{ P'_i \}$ and $\{ P_i\}$ determine 2-isomorphic morphisms
if and only if there exists $g\in G_{\bSi_2}$ such that
$$
(P_1',\ldots, P_{r_2}') = g\cdot (P_1,\ldots, P_{r_2}).
$$
Note that for a given choice of $\{P_i\}$,
$\tu:U_1\to U_2$ can be viewed as a morphism between
smooth toric varieties. We have a cartesian diagram:
$$
\xymatrix{
U_1\times \tT_1 \ar[r]^{(\tu, \tu|_{\tT_1}) }  \ar[d]_{\ta_1}  &
U_2\times \tT_2 \ar[d]_{\ta_2} \\
U_1  \ar[r]^{\tu} & U_2
}
$$
where $\tT_i\cong (\bC^*)^{r_i}$. This gives a group homomorphism
$\tf: \tN_1\to \tN_2$ which fits in the following commutative diagram.
$$
\xymatrix{
\tN_1 \ar[r]^{\tf}  \ar[d]_{\beta_1}  &
\tN_2 \ar[d]_{\beta_2} \\
N_1  \ar[r]^{f} & N_2
}
$$

\subsection{Coherent-constructible dictionary---functoriality and tensoriality}
\label{sec:ccfunctoriality}

In this section we show that the equivalence $\kappa_{\bSi}$ between
coherent and constructible sheaves intertwines with appropriate
pull-back and and push-forward functors. We use the notation in
Section \ref{sec:mor}.

\begin{theorem}[functoriality]
\label{thm:functoriality}
Let $f:\bSi_1=(N_1,\Si_1, \beta_1)\to \bSi_2=(N_1,\Si_2,\beta_2)$
be a morphism of stacky fans, where $\Si_1$ is a complete fan.
Suppose that $f$ furthermore satisfies the following conditions:
\begin{enumerate}
\item[(i)] The inverse image of any cone $\si_2\subset \Si_2$ is a union of cones in $\Si_1$.
(For instance, if both fans are complete then $f$ automatically satisfies
this condition.)
\item[(ii)] $f$ is injective.
\end{enumerate}
Let $u$, $v$, $\cX_i$, $\cT_i$ be as in Section \ref{sec:mor}. Then
\begin{enumerate}
\item The pullback $u^*:\cQ_{\cT_2}(\cX_2) \to \cQ_{\cT_1}(\cX_1)$ takes 
$\ltrp_{\bSi_2}$ to $\ltrp_{\bSi_1}$.

\item The proper pushforward $v_!:Sh_c(M_{2,\bR}) \to Sh_c(M_{1,\bR})$ takes $\ltr_{\bSi_2}$ to
$\ltr_{\bSi_1}$.

\item The following square of functors commutes up to natural isomorphism:
$$\xymatrix{
\ltrp_{\bSi_2} \ar[r]^{\kappa_2} \ar[d]_{u^*} &\ltr_{\bSi_2} \ar[d]^{v_!} \\
\ltrp_{\bSi_1} \ar[r]^{\kappa_1} &  \ltr_{\bSi_1}
}$$
where $\kappa_i=\kappa_{\bSi_i}$, $i=1,2$.
\end{enumerate}
\end{theorem}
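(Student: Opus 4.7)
The plan is to deduce the statement from its analog for smooth toric varieties established in \cite{more}, using the quasi-equivalences of Theorem~\ref{thm:mainfirst} to transfer between the ``tilde'' setting and the stacky setting. Lifting $f:N_1\to N_2$ to the morphism $\tf:\tN_1\to\tN_2$ constructed at the end of Section~\ref{sec:mor}, one obtains an equivariant morphism $\tu:U_1\to U_2$ of smooth toric varieties and a dual linear map $\tv:\tM_{2,\bR}\to\tM_{1,\bR}$. Conditions~(i) and~(ii) for $f$ pass to $\tf$, so the functoriality theorem for toric varieties in \cite{more} applies to $\tu$ and yields $\tu^*(\ltrp_{\tSi_2})\subset\ltrp_{\tSi_1}$, $\tv_!(\ltr_{\tSi_2})\subset\ltr_{\tSi_1}$, and a natural isomorphism $\kappa_{\tSi_1}\circ\tu^*\cong\tv_!\circ\kappa_{\tSi_2}$.

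To transport this along the vertical equivalences $q_i:\ltrp_{\tSi_i}\simeq\ltrp_{\bSi_i}$ and $\hat{\beta}_i^!:\ltr_{\tSi_i}\simeq\ltr_{\bSi_i}$ of Theorem~\ref{thm:mainfirst}, I would verify two intertwining isomorphisms. The first, $u^*\circ q_2\cong q_1\circ\tu^*$, is essentially tautological under the identifications $\cQ_{\cT_i}(\cX_i)=\cQ_{\tT_i}(U_i)$ of Section~\ref{sec:global-quotient}: pullback along the equivariant morphism $\tu$ automatically preserves torus equivariance, so nothing beyond a routine unwinding of definitions is required. The second, $\hat{\beta}_1^!\circ\tv_!\cong v_!\circ\hat{\beta}_2^!$, relies on the commutative square of real vector spaces
\begin{equation*}
\xymatrix{M_{2,\bR}\ar[r]^{\hat{\beta}_2}\ar[d]_{v} & \tM_{2,\bR}\ar[d]^{\tv}\\ M_{1,\bR}\ar[r]^{\hat{\beta}_1} & \tM_{1,\bR}}
\end{equation*}
obtained by dualizing and tensoring with $\bR$ the diagram relating $\beta_i$, $f$, and $\tf$ at the end of Section~\ref{sec:mor}. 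I would verify this isomorphism on the generators $\Theta(\tsi_2,\chi_2)=\tj_!\omega_{(\chi_2+\tsi_2^\vee)^\circ}$: by Lemma~\ref{Theta-updown}, $\hat{\beta}_2^!\Theta(\tsi_2,\chi_2)=\Theta(\si_2,\chi_2)$, and the toric-variety functoriality presents $\tv_!\Theta(\tsi_2,\chi_2)$ as an iterated cone of sheaves $\Theta(\tsi_1,\chi_1)$; applying $\hat{\beta}_1^!$ term by term and invoking Lemma~\ref{Theta-updown} again identifies the result with the analogous presentation of $v_!\Theta(\si_2,\chi_2)$.

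Granted these two compatibilities, assertions~(1) and~(2) are immediate because $\tu^*$ and $\tv_!$ preserve the respective subcategories in the tilde setting, and~(3) follows by pasting the toric-variety functoriality square together with the two intertwining squares into a commuting cube. The main obstacle is the second intertwiner $\hat{\beta}_1^!\circ\tv_!\cong v_!\circ\hat{\beta}_2^!$: the displayed square of vector spaces is generally not cartesian (the $\hat{\beta}_i$ are injective but not surjective), so proper base change does not apply off the shelf. The verification on generators must instead exploit the transversality of the closed embeddings $\hat{\beta}_i$ with the shifted dual cones $\chi+\tsi^\vee$ supporting the standard sheaves $\Theta(\tsi,\chi)$.
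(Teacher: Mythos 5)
Your strategy---apply the toric-variety functoriality theorem of \cite{more} to the lifted data $(\tf,\tu,\tv)$, then conjugate by the vertical equivalences $q$ and $\hat\beta^!$ of Theorem~\ref{thm:mainfirst}---is genuinely different from the paper's, which reworks the stacky case directly (part (1) via \v{C}ech resolution, flat base change on the charts $U_i$, and the cover $\tu^{-1}(U_{\si_2})=\bigcup U_{B_i}$; parts (2)--(3) by explicitly constructing the natural transformation $\iota_{\si_2,\chi_2}$ on generators and checking it is a quasi-isomorphism, following the \emph{strategy}, not the statement, of \cite[Theorem 3.8]{more}).

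The critical gap is in your claim that conditions (i) and (ii) ``pass to $\tf$,'' so that the cited theorem ``applies to $\tu$.'' Neither passage is automatic, and the second can outright fail. First, $\tf:\tN_1\cong\bZ^{r_1}\to\tN_2\cong\bZ^{r_2}$ is only constrained by $\beta_2\circ\tf=f\circ\beta_1$ and compatibility with cones; even when $f$ is injective, $\tf$ need not be---for example, if $\Si_1$ has more rays than $\Si_2$ (as in a refinement), then $r_1>r_2$ and $\tf$ cannot possibly be injective. Second, $\tSi_1$ is a strict subfan of the first-orthant fan in $\bR^{r_1}$ and is never complete, so the inverse image $\tf_\bR^{-1}(\tsi_2)$ is a cone that typically spills outside $|\tSi_1|$ and is not a union of cones of $\tSi_1$; condition (i) would have to be reinterpreted as a statement about $\tf_\bR^{-1}(\tsi_2)\cap|\tSi_1|$ and then verified. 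You cannot simply cite \cite[Theorem 3.8]{more} for $\tu$ without either re-verifying (or weakening) its hypotheses for the lifted data, and this is precisely the work the paper sidesteps by arguing in the stacky setting where the hypotheses on $f$ are given.

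A secondary concern, which you correctly identify but leave unresolved, is the intertwiner $\hat\beta_1^!\circ\tv_!\cong v_!\circ\hat\beta_2^!$. The displayed square of vector spaces commutes but is not cartesian, so proper base change does not apply; for the supports to match one needs, for each $x\in\chi_2+\tsi_2^\vee$ with $\tv(x)\in\hat\beta_1(M_{1,\bR})$, a point $y\in(\chi_2+\tsi_2^\vee)\cap\hat\beta_2(M_{2,\bR})$ with $\tv(y)=\tv(x)$. That is a nontrivial geometric statement about the cone and the linear maps, and invoking ``transversality'' is not a proof. The paper avoids this entirely: it never transports $v_!$ through $\tv_!$; it computes $v_!\kappa_2(\Theta'(\si_2,\chi_2))$ directly as a costandard sheaf on $v((\si_2)^\vee_{\chi_2})^\circ$, compares it with the \v{C}ech complex representing $\kappa_1 u^*\Theta'(\si_2,\chi_2)$, and exhibits an explicit quasi-isomorphism between them.
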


\begin{proof}
As in Section \ref{sec:mor}, we choose liftings
$\tSi_i\subset \tN_{i,\bR}$ and $\tf:\tSi_1\to \tSi_2$ which
induces a morphism $\tu: U_1\to U_2$ of smooth toric varieties such that
$$
\xymatrix{
U_1 \ar[r]^{\tu} \ar[d] & U_2 \ar[d]\\
\cX_1 \ar[r]^u & \cX_2
}
$$
where $U_i=X_{\tSi_i}$ and $\cX_i=[U_i/G_{\bSi_i}]$.

For each cone $\sigma_2 \in \Sigma_2$, we have a 2-cartesian square
\begin{equation}\label{eqn:j}
\xymatrix{
u^{-1}(\cX_{\sigma_2}) \ar[r]^{u'} \ar[d]_j & \cX_{\sigma_2} \ar[d]^{j_{\si_2}} \\
\cX_1 \ar[r]^u & \cX_2
}
\end{equation}
where $u'= u|_{u^{-1}(\cX_{\sigma_2})}$.

We have the following cartesian square which corresponds to
\eqref{eqn:j}:
\begin{equation}\label{eqn:tj}
\xymatrix{
\tilde{u}^{-1}(U_{\sigma_2}) \ar[r]^{\tu'} \ar[d]_{\tj} & U_{\sigma_2} \ar[d]^{\tj_{\si_2}} \\
U_1 \ar[r]^{\tu} & U_2
}
\end{equation}
where $\tu'=\tu|_{\tu^{-1}(U_{\si_2})}$. Let $\cO_{U_{\sigma_2}}(\chi_2)$ be defined as
in Section \ref{sec:lift}. The vertical arrows in \eqref{eqn:tj} are open inclusions,
so by the flat base change formula we have
$$
\tu^* \tj_{\si_2 *}\cO_{U_{\si_2}}(\chi_2)  \cong \tj_* \cL
$$
where $\cL :=(\tu')^* \cO_{U_{\si_2}}(\chi_2)$ is a $\tT_1$-equivariant line bundle on $\tu^{-1}(U_{\si_2})$.

We fix a total order on the set of maximal
cones $B_1,\ldots, B_w$ contained in $f^{-1}(\si_2)$. By assumption
$$
u^{-1}(\cX_{\sigma_2}) = \bigcup_{i=1}^w \cX_{B_i},\quad
\tu^{-1}(U_{\sigma_2}) =\bigcup_{i=1}^w U_{B_i}.
$$
For each $B_i$ we have $f(N_{B_i})\subset N_{\sigma_2}$. Let
$f_{B_i,\sigma_2}: N_{B_i}\to N_{\sigma_2}$ be the restriction
of $f$, and let $f_{B_i,\sigma_2}^*: M_{\sigma_2}\to M_{B_i}$
be the dual map of $\bar{f}_{B_i,\si_2}:\baN_{B_i}\to \baN_{\si_2}$. 
Let $\phi_i = f_{B_i,\sigma_2}^*(\chi_2) \in M_{B_i}$. Then
$$
\cL|_{U_{B_i}} = \cO_{U_{B_i}}(\phi_i).
$$
More generally, put
$$
B_{i_0 \cdots i_k}= B_{i_0}\cap \cdots \cap B_{i_k},\quad
\phi_{i_0 \cdots i_k} = f_{B_{i_0 \cdots i_k},\si_2}^*(\chi_2) \in M_{B_{i_0\cdots i_k}}
$$
then $\cL|_{U_{B_{i_0\cdots i_k}} } = \cO_{U_{B_i}}(\phi_{i_0 \cdots i_k})$.
Therefore $\tj_*\cL$ is quasi-isomorphic to the complex
$$
\bigoplus_{i_0} \tj_{B_{i_0} *}\cO_{U_{B_{i_0}}}(\phi_{i_0}) \to
\bigoplus_{i_0 < i_1} \tj_{B_{i_0 i_1} *}\cO_{U_{B_{i_0 i_1}}}(\phi_{i_0 i_1}) \to \cdots
$$
Equivalently,  $u^*\Theta'(\sigma_2,\chi_2)$ is quasi-isomorphic to the complex
$$
\bigoplus_{i_0} \Theta'(B_{i_0},\phi_{i_0}) \to
\bigoplus_{i_0 < i_1} \Theta'(B_{i_0 i_1}, \phi_{i_0 i_1}) \to \cdots
$$
This proves the assertion (1).

After Theorem \ref{thm:mainfirst}, the assertions (2) and (3) follow from the commutativity of the following diagram:
$$
\xymatrix{
\ltrp_{\bSi_2} \ar[r]^{\kappa_2} \ar[d]_{u^*} &\Sh_c(M_{2,\bR}) \ar[d]^{v_!} \\
\ltrp_{\bSi_1} \ar[r]_{\kappa_1} &  \Sh_c(M_{1,\bR})
}$$

We follow the strategy of the proof of \cite[Theorem 3.8]{more}.
To construct a natural quasi-isomorphism $\iota:v_! \circ \kappa_2 \stackrel{\sim}{\to} \kappa_1 \circ u^*$, it suffices to give maps
$$
\iota_{\si_2,\chi_2}:v_! \kappa_2(\Theta'(\sigma_2,\chi_2)) \to \kappa_1(u^* \Theta'(\sigma_2,\chi_2))
$$
with the following properties.
\begin{itemize}
\item Each $\iota_{\si_2,\chi_2}$ is a quasi-isomorphism.
\item The following square commutes whenever $(\sigma_2,\chi_2) \leq (\tau_2,\psi_2)$:
$$
\xymatrix{
v_! \kappa_2(\Theta'(\si_2,\chi_2)) \ar[r] \ar[d]_{\iota_{\si_2,\chi_2}} & 
v_! \kappa_2(\Theta'(\tau_2,\psi_2)) \ar[d]_{\iota_{\tau_2,\psi_2}}\\
\kappa_1(u^* \Theta'(\si_2,\chi_2)) \ar[r] & \kappa_1(u^*\Theta'(\tau_2,\psi_2))
}
$$
\end{itemize}

As in the proof of \cite[Theorem 3.8]{more}, we have a quasi-isomorphism 
$$
v_! \kappa_2(\Theta'(\sigma_2,\chi_2))
\stackrel{\sim}{\to} j_{v( (\sigma_2)^\vee_{\chi_2})^\circ!}\omega,\quad  \omega =\omega _{((\si_2^\vee)_{\chi_2})^\circ}.
$$
Now let us compute $\kappa_1 u^* \Theta'(\sigma_2,\chi_2)$.
We have already seen that $u^*\Theta'(\sigma_2,\chi_2)$ has the \v Cech resolution
$$
\bigoplus_{i_0} \Theta'(B_{i_0},\phi_{i_0}) \to
\bigoplus_{i_0 < i_1} \Theta'(B_{i_0 i_1},\phi_{i_0 i_1}) \to \cdots
$$
After applying $\kappa_1$ we have
$$
\bigoplus_{i_0} \Theta(B_{i_0},\phi_{i_0} ) \to
\bigoplus_{i_0 < i_1} \Theta(B_{i_0 i_1},\phi_{i_0 i_1}) \to \cdots
$$
Now we define the map $\iota:v_! \kappa_2(\Theta'(\sigma_2,\chi_2)) \to \kappa_1 u^* \Theta'(\sigma_2,\chi_2)$
to be the morphism of complexes:
$$
\begin{CD}
j_{v( (\si_2)_{\chi_2}^\vee)^\circ !}\omega   @>>>   0 @>>>  \cdots \\
@VVV @VVV\\
\bigoplus_{i_0} \Theta(B_{i_0},\phi_{i_0}) @>>> \bigoplus_{i_0 < i_1} \Theta(B_{i_0 i_1},\phi_{i_0 i_1}) @>>> \cdots
\end{CD}
$$
where the nonzero vertical arrow is the direct sum of the maps induced by the inclusion of open sets
$$ v( (\sigma_2)^\vee_{\chi_2})^\circ \subset (B_{i_0})_{\phi_{i_0}}^\circ$$
This map clearly has the desired naturality property. By the argument in the last part
of the proof of \cite[Theorem 3.8]{more}, it is a quasi-isomorphism.
\end{proof}

\begin{example}
\label{ex:diagonal}
The diagonal map $N \to N \oplus N$ satisfies the hypotheses of Theorem \ref{thm:functoriality}.
The corresponding map $u:\cX \to \cX \times \cX$ is also the diagonal map, and the corresponding map
$v:M_\bR \times M_\bR \to M_\bR$ is the addition map.

More generally, let $\bSi = (N,\bSi,\beta)$ be the stacky fan defining $\cX$, where
$\Si$ is not necessarily complete. We have
$$
\xymatrix{
U \ar[r]^{\tilde{\Delta}} \ar[d] & U\times U \ar[d] \\
\cX \ar[r]^\Delta & \cX \times \cX
}
$$
where $\cX = [U/G_{\bSi}]$ and $\cX \times \cX = [U/G_{\bSi}]\times [U/G_{\bSi}] \cong [U\times U/G_{\bSi}\times G_{\bSi}]$.
Since we can lift the diagonal map $\Delta:\cX \to \cX\times \cX$ to
the diagonal map $\tilde{\Delta}: U\to U\times U$ for
any toric DM stack $\cX$, the proof of Theorem \ref{thm:functoriality} is valid, and
the conclusions of Theorem \ref{thm:functoriality} hold, when $f$ is the diagonal
map for any toric DM stack.
\end{example}

Recall that the \emph{convolution} of two sheaves $F$ and $G$ on a vector space $M_\bR$ is given by the formula $F \star G = v_!(F \boxtimes G)$, where $v$ denotes the addition map as in the example.  Convolution defines a monoidal  structure on $\Sh(M_\bR)$ 
and various subcategories, including $\ltr_{\bSi}$ and $\Sh_{cc}(M_\bR)$.  From Example \ref{ex:diagonal}, following the argument in
the proof of \cite[Corollary 3.13]{more}, we see the following:

\begin{corollary}
\label{cor:tensor1}
For any stacky fan $\bSi=(N,\Si,\beta)$,
the equivalence $\kappa_{\bSi}:\ltrp_{\bSi} \stackrel{\sim}{\to} \ltr_{\bSi}$ is an equivalence of
monoidal dg categories, where the monoidal structure on  $\ltrp_{\bSi}$ is given by the 
tensor product of quasicoherent sheaves, and the monoidal structure on
$\ltr_{\bSi}$ is given by convolution.
\end{corollary}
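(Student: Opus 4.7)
The plan is to deduce the monoidal compatibility by combining the functoriality result (Theorem \ref{thm:functoriality}) applied to the diagonal morphism, as packaged in Example \ref{ex:diagonal}, with a K\"unneth-type statement identifying $\kappa_{\bSi \times \bSi}$ on external products.

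First, I would unpack the two monoidal structures in terms of an external product and a functor. On the coherent side, the tensor product of $\cT$-equivariant sheaves factors as $F \otimes G \cong \Delta^*(F \boxtimes G)$, where $\Delta:\cX_\bSi \to \cX_\bSi \times \cX_\bSi$ is the diagonal; on the constructible side, convolution is by definition $\tilde F \star \tilde G = v_!(\tilde F \boxtimes \tilde G)$, where $v:M_\bR \times M_\bR \to M_\bR$ is addition. Example \ref{ex:diagonal} precisely matches these: the diagonal of $\bSi$ in $\bSi \oplus \bSi$ is a morphism of stacky fans to which Theorem \ref{thm:functoriality} applies (in the form valid for arbitrary toric DM stacks noted there), and its associated map of dual spaces is $v$.

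Next, I would establish the K\"unneth-type compatibility
\begin{equation*}
\kappa_{\bSi \times \bSi}\bigl(\Theta'(\si_1,\chi_1) \boxtimes \Theta'(\si_2,\chi_2)\bigr)
\;\cong\; \Theta(\si_1,\chi_1) \boxtimes \Theta(\si_2,\chi_2)
\;\cong\; \kappa_{\bSi}(\Theta'(\si_1,\chi_1)) \boxtimes \kappa_{\bSi}(\Theta'(\si_2,\chi_2)).
\end{equation*}
For the $\Theta'$ side this reduces, after passing through the quasi-equivalence $q$ of Theorem \ref{thm:mainfirst}, to the analogous identity on the smooth toric varieties $U \times U = X_{\tSi \oplus \tSi}$, which is immediate from the construction of $\Theta'$ as a pushforward from an open affine chart (the chart for $\si_1 \times \si_2$ in the product fan is the product of the charts). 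For the $\Theta$ side the identity is formal from the formula $\Theta(\si,\chi) = j_!\omega$ together with $\omega_{A\times B} = \omega_A \boxtimes \omega_B$ and the fact that the product of shifted dual cones $(\si_1)_{\chi_1}^\vee \times (\si_2)_{\chi_2}^\vee$ is the shifted dual cone for $(\si_1 \times \si_2, (\chi_1,\chi_2))$ in the product stacky fan. Extending from generators $\Theta'$ to all of $\ltrp_\bSi$ by triangulated arguments then gives $\kappa_{\bSi\times\bSi}(F \boxtimes G) \cong \kappa_\bSi(F) \boxtimes \kappa_\bSi(G)$ naturally.

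Putting these together, for $F, G \in \ltrp_{\bSi}$ one computes
\begin{equation*}
\kappa_{\bSi}(F \otimes G) \;\cong\; \kappa_{\bSi}\bigl(\Delta^*(F \boxtimes G)\bigr)
\;\cong\; v_!\,\kappa_{\bSi \times \bSi}(F \boxtimes G)
\;\cong\; v_!\bigl(\kappa_{\bSi}(F) \boxtimes \kappa_{\bSi}(G)\bigr)
\;=\; \kappa_{\bSi}(F) \star \kappa_{\bSi}(G),
\end{equation*}
using Example \ref{ex:diagonal} plus Theorem \ref{thm:functoriality} at the middle step and the K\"unneth compatibility at the third. Naturality and associativity/unit constraints are inherited from the corresponding properties of $\Delta^*$, $v_!$, and the external product. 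I expect the main obstacle to be not the algebraic chain of isomorphisms above, which is forced once the pieces are in place, but rather the careful verification of the K\"unneth compatibility at the level of dg categories together with the product-fan variant of Theorem \ref{thm:functoriality} applied to the (generally non-complete) diagonal; this is exactly the point the statement of Example \ref{ex:diagonal} was designed to address, as emphasized in the analogous \cite[Corollary 3.13]{more}.
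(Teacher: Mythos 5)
Your proposal is correct and follows essentially the same route as the paper, which cites Example \ref{ex:diagonal} and then defers to the argument of \cite[Corollary 3.13]{more}: that argument is exactly the combination you spell out, namely functoriality of $\kappa$ with respect to the diagonal morphism (to intertwine $\Delta^*$ with $v_!$) together with the K\"unneth compatibility of $\kappa$ with external products on $\Theta$- and $\Theta'$-generators. You have simply unpacked what the paper leaves as a citation, so there is no substantive difference in approach.
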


\section{Intrinsic Characterizations}
\label{sec:reformulate}

In Section \ref{sec:Theta-Theta} above,  we have given an equivalence $\kappa_\bSi$ between 
a certain dg category of quasicoherent sheaves on the toric DM stack $\cX_{\bSi}$---which we have called $\ltrp_\bSi$---and 
a certain category of constructible sheaves on the real vector space $M_\bR$---which we have called $\ltr_\bSi$.  
The categories $\ltr_\bSi$ and $\ltrp_\bSi$ are defined by their set of generating objects 
$\{\Theta(\si,\chi)\}_{(\si, \chi) \in \Ga(\bSi)}$ and $\{\Theta'(\si,\chi)\}_{(\si,\chi) \in\Ga(\bSi)}$.  
In this section we give intrinsic characterizations of these categories.

\subsection{Shard arrangements}
\label{sec:shard}

\begin{definition}[shard arrangement]
Let $\bSi=(N,\Si,\beta)$ be a stacky fan.  A \emph{shard arrangement} for $\bSi$ is a closed set 
$E \subset M_\bR \times N_\bR$ of the form
$$
E = \bigcup_{i = 1}^k (\si_i)_{\chi_i}^\perp \times -\sigma_i
$$
where for each $i = 1, \ldots, k$, $(\sigma_i,\chi_i)\in \Ga(\bSi)$.
\end{definition}

\begin{definition}
A sheaf $F$ on $M_\bR$ is called a \emph{$\bSi$-shard sheaf} if it is cohomologically bounded and constructible, it has finite-dimensional fibers, and its singular support $\SS(F)$ is a subset of a shard arrangement.  Let $\Shard(M_\bR;\bSi)$ denote the triangulated dg category of $\bSi$-shard sheaves on $M_\bR$.
\end{definition}

The union of all possible shard arrangements is a conical Lagrangian $\LbS$ in $M_\bR\times N_\bR=T^*M_\bR$:
\begin{definition}
Define the conical Lagrangian
\begin{equation}
\LbS = \bigcup_{\tau\in\Sigma} \bigcup_{\chi\in M_\tau}
\tau_\chi^\perp \times -\tau \subset M_\bR\times N_\bR = T^*M_\bR.
\end{equation}
\end{definition}

For example, we have $\Sh_{cc}(M_\bR;\LbS) \subset \Shard(M_\bR;\bSi)$.
More generally the sheaves $\Theta(\sigma,\chi)$ belong to $\Shard(M_\bR;\bSi)$
but do not have compact support.
Suppose that $\tau\subset \si \in \Si$, let $f_{\tau\si}$ denote
the inclusion $N_\tau \to N_\si$, and let $f_{\tau\si}^*: M_\sigma\to M_\tau$
be the dual map of $\bar{f}_{\tau\sigma}:\baN_\tau\to \baN_\si$.

\begin{proposition} \label{prop:SStheta}
After identifying $T^*M_\bR$ with $M_\bR \times N_\bR$, the singular support of
$\Theta(\sigma,\chi)$ is given by the following:
$$
\SS(\Theta(\sigma,\chi)) = \bigcup_{\tau \subset \sigma}
(\tau_{\chi_\tau}^\perp \cap \sigma_\chi^\vee) \times -\tau.
$$
where $\chi_\tau = f^*_{\tau\sigma}(\chi)$.
\end{proposition}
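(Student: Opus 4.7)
The plan is to read off $\SS(\Theta(\si,\chi))$ directly from the definition $\Theta(\si,\chi)=j_!\omega_{(\si^\vee_\chi)^\circ}$ by a standard computation of singular support for the shriek-extension of a constant sheaf from the interior of a closed rational polyhedral set. Since $M_\bR$ is oriented, $\omega_{(\si^\vee_\chi)^\circ}\cong\bC_{(\si^\vee_\chi)^\circ}[\dim M_\bR]$, and a shift does not affect $\SS$, so it suffices to compute $\SS(j_!\bC_U)$ for $U:=(\si^\vee_\chi)^\circ$.

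First I would set up the face stratification of $K:=\si^\vee_\chi$. Writing $\si\cap\{\bab_1,\dots,\bab_r\}=\{w_1,\dots,w_d\}$, the face of $K$ indexed by a face $\tau\subset\si$ is the subset on which the defining inequalities $\langle\,\cdot\,,w_i\rangle\ge\langle\chi,w_i\rangle_\si$ are saturated for precisely the $w_i$ lying in $\tau$.  Using the identity $\langle\chi_\tau,w\rangle_\tau=\langle\chi,w\rangle_\si$ for $w\in\baN_\tau$ (which follows from $\chi_\tau=f^*_{\tau\si}(\chi)$), this face is exactly $F_\tau:=\tau^\perp_{\chi_\tau}\cap\si^\vee_\chi$.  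The relative interiors $F_\tau^\circ$, as $\tau$ ranges over faces of $\si$, give a Whitney stratification of $K$, with $F_{\{0\}}^\circ=U$.

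Next I would compute the outward conormal cone to $K$ along each boundary stratum.  At a point $x\in F_\tau^\circ$ the active defining inequalities are exactly those indexed by $w_i\in\tau$, so the tangent cone to $K$ at $x$ is $\tau^\vee=\{v\in M_\bR:\langle v,w_i\rangle\ge 0 \text{ for } w_i\in\tau\}$, and the outward normal cone (its polar) is the nonnegative span of the vectors $-w_i$, i.e. $-\tau\subset N_\bR$.  By the standard microlocal formula for $\SS(j_!\bC_U)$ when $U$ is a subanalytic open set with piecewise-linear boundary (cf.\ \cite{KS}; compare the analogous computation in the smooth toric case in \cite[Section~3]{more}), this yields
\[
\SS(\Theta(\si,\chi))=\bigcup_{\tau\subset\si} F_\tau^\circ\times(-\tau)\ \subset\ M_\bR\times N_\bR.
\]

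Finally I would verify that this set equals the one in the statement, $\bigcup_{\tau\subset\si}(\tau^\perp_{\chi_\tau}\cap\si^\vee_\chi)\times(-\tau)$.  The inclusion from left to right is immediate.  For the reverse, take $(x,\xi)$ with $x\in F_\tau$ and $\xi\in-\tau$, and let $\tau'\subset\si$ be the unique face with $x\in F_{\tau'}^\circ$; then $\tau\subset\tau'$, so $-\tau\subset-\tau'$ and $(x,\xi)\in F_{\tau'}^\circ\times(-\tau')$, which is one of the pieces in the previous display.  The main point requiring care is the sign convention for outward conormals---it is essential that the $!$-extension gives $-\tau$ rather than $+\tau$, the latter being appropriate for $j_*$---and a mild bookkeeping check that closing up the $F_\tau^\circ$ to $F_\tau$ in the above union introduces no covector directions outside the $-\tau'$ for the relevant larger faces $\tau'\supset\tau$.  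Once these are in place, the proposition follows.
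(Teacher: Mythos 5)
The paper states Proposition \ref{prop:SStheta} without any proof, evidently regarding it as a routine polyhedral singular-support computation analogous to the smooth case in \cite{more}; your proposal supplies exactly the argument one would expect, and it is correct. The reduction from $\omega_{(\si^\vee_\chi)^\circ}$ to a shifted constant sheaf, the face stratification $\{F_\tau^\circ\}_{\tau\subset\si}$ of $\si_\chi^\vee$ (which is legitimate precisely because $\si$ is simplicial, so the $d$ defining inequalities are independent and every face is indexed by a face of $\si$), the identification of the tangent cone at $F_\tau^\circ$ with $\tau^\vee$ and its polar with $-\tau$ (the sign is right: $j_!$ of a constant sheaf on a convex open set picks up the \emph{outward} conormal, while $i_*$ of its closure would give the inward one), and the final observation that replacing $F_\tau^\circ$ by the closed face $F_\tau$ adds no new points to the union --- all check out. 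The one soft spot is the phrase ``standard microlocal formula for $\SS(j_!\bC_U)$'': this is indeed standard, but for a self-contained writeup you should cite it precisely; it follows, for example, from the explicit formula for $\SS$ of constant sheaves on closed convex subsets in \cite[Chapter 5]{KS} (combined with the distinguished triangle $j_!\bC_{K^\circ}\to\bC_K\to\bC_{\partial K}$, or directly from the convex-cone case by translating), or by invoking the corresponding lemma proved in \cite[Section 3]{more} for the cones $\chi+\tsi^\vee$ and pulling back along $\hat\beta$ via Lemma \ref{Theta-updown}, since $\SS(\hat\beta^!G)$ is controlled by $\SS(G)$ for the closed embedding $\hat\beta$.
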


\begin{theorem} \label{thm:thetagens} 
The dg category $\Shard(M_\bR;\bSi)$ is
quasi-equivalent to $\ltr_\bSi$. In other words, every shard sheaf is
quasi-isomorphic to a bounded complex of the form
$$
\cdots \to \bigoplus_i \Theta(\si_i,\chi_i)    \to \bigoplus_j
\Theta(\si_j,\chi_j) \to \cdots
$$
where each sum is finite.
\end{theorem}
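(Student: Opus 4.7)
The plan is to prove both containments. The direction $\ltr_\bSi \subseteq \Shard(M_\bR;\bSi)$ is immediate from Proposition~\ref{prop:SStheta}: each generator $\Theta(\si,\chi)$ has singular support inside the shard arrangement $\bigcup_{\tau \subset \si} \tau_{\chi_\tau}^\perp \times (-\tau)$, and the class of sheaves with shard-arrangement singular support is closed under finite cones and shifts, hence contains the triangulated envelope.

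For the reverse inclusion, I would fix a shard sheaf $F$ with
\[
\SS(F) \subset \bigcup_{k=1}^K (\si_k)^\perp_{\chi_k} \times (-\si_k).
\]
The projections of these components cut $M_\bR$ into a finite arrangement of affine subspaces of various codimensions; refining by the cone structure of $\Si$ yields a stratification $\cS$ of $M_\bR$ by open convex (hence contractible) polyhedral pieces with respect to which $F$ is constructible. Because each stratum is contractible, a standard Cousin-type devissage along codimension reduces the problem to showing that each stratum-costandard sheaf $j_{S!}\omega_S$ belongs to $\ltr_\bSi$.

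The heart of the argument is then an inductive resolution of these $j_{S!}\omega_S$ by the $\Theta(\si,\chi)$'s. I would induct on the number of half-space inequalities cutting out $S$. The base case $S = M_\bR$ gives $\omega_{M_\bR} = \Theta(0,0)$; a stratum of the form $((\si^\vee)_\chi)^\circ$ is literally $\Theta(\si,\chi)$. For an open stratum obtained by intersecting two simpler pieces $S_1, S_2$, the Mayer--Vietoris triangle
\[
j_{S_1 \cup S_2,!}\omega_{S_1 \cup S_2} \to j_{S_1!}\omega_{S_1} \oplus j_{S_2!}\omega_{S_2} \to j_{S_1 \cap S_2,!}\omega_{S_1 \cap S_2} \to [1]
\]
expresses the intersection term in terms of the other two. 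Strata cut out by the half-space on the \emph{opposite} side of a hyperplane $\si^\perp_\chi$ are handled by the distinguished triangle attaching $\Theta(\si,\chi)$ to $\omega_{M_\bR}$, whose third term is supported on a codimension-one boundary subspace and is resolved by an internal induction on the ambient dimension.

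The main obstacle will be the combinatorial bookkeeping needed to resolve the closed-half-space and boundary-subspace contributions purely in terms of $\Theta$'s. Fortunately, Lemma~\ref{Theta-updown} and Theorem~\ref{thm:mainfirst} identify $\ltr_\bSi$ with $\ltr_\tSi$ via $\hat{\beta}^!$, and send generators to generators; the corresponding statement for the smooth toric variety $X_\tSi$ is already established in \cite{more}. Thus one may either adapt the \v{C}ech/inclusion--exclusion argument used there directly to $\bSi$, or transfer already-constructed resolutions along the equivalence $\kappa_\bSi \circ q \simeq \hat{\beta}^! \circ \kappa_\tSi$ of Theorem~\ref{thm:mainfirst}, provided one is careful that the ``shard sheaf'' condition itself also transfers through $\hat{\beta}^!$ at the level of generators.
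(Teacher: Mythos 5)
Your closing paragraph --- transfer the statement from the smooth toric variety $U=X_{\tSi}$ along $\hat{\beta}^!$, or equivalently just invoke the corresponding theorem of \cite{more} --- is exactly what the paper does: its entire proof is the citation of \cite[Theorem 5.2]{more}, and this is legitimate because $\Ga(\bSi)=\Ga(\tSi,\tM)$ as posets (Lemma \ref{poset-updown}), $\hat{\beta}^!$ carries $\Theta(\tsi,\chi)$ to $\Theta(\si,\chi)$ (Lemma \ref{Theta-updown}), and the definitions of shard arrangement and shard sheaf here are verbatim those of \cite{more} applied to the data $(\baN,\Si,\{M_\si\})$. So the fallback route is fine.

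The direct argument that occupies the body of your proposal, however, has a genuine gap at the reduction step. You claim that the Cousin devissage along the stratification induced by the arrangement ``reduces the problem to showing that each stratum-costandard sheaf $j_{S!}\omega_S$ belongs to $\ltr_{\bSi}$.'' That statement is false: $\ltr_{\bSi}\subset\Shard(M_\bR;\bSi)$, and most stratum-costandard sheaves are \emph{not} shard sheaves, hence cannot lie in $\ltr_{\bSi}$. Already for $N=\bZ$ and $\Si$ the fan of $\bA^1$ with the single ray $\rho=\bR_{\geq 0}$, the shard arrangements only allow fibers in $-\rho=\bR_{\leq 0}$, so $\Theta(\rho,a)=j_!\omega_{(a,\infty)}$ is a shard sheaf, but the stratum-costandard sheaf $j_!\omega_{(a,b)}$ on a bounded open interval of the induced stratification is not: its singular support at the right endpoint $b$ points in the direction $+\rho$. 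The same phenomenon occurs in higher dimensions (and even for complete fans): the conormal cone of a stratum at one of its faces is spanned by outward normals $\pm w_i$ in combinations that need not equal $-\tau$ for any $\tau\in\Si$, nor sit over a translate $\tau^\perp_{\chi}$ with $\chi\in M_\tau$. The singular support bound on $F$ is a global cancellation condition that is destroyed by passing to individual strata, so the induction cannot be run stratum by stratum; the entire content of \cite[Theorem 5.2]{more} is the bookkeeping that reassembles only those combinations of costandard pieces whose ``wrong-direction'' conormals cancel. Your Mayer--Vietoris triangles and the ``internal induction on ambient dimension'' gesture at this but do not carry it out, and the sentence ``the main obstacle will be the combinatorial bookkeeping'' is in effect deferring the whole theorem. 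So as a self-contained proof the sketch is incomplete; as a reduction to \cite{more} it is correct and matches the paper.
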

\begin{proof}
This is an immediate consequence of \cite[Theorem 5.2]{more}.
\end{proof}

Combining Theorem \ref{Theta-updown}, Theorem \ref{thm:thetagens},
and the results in \cite[Section 5]{more}, we obtain:
\begin{corollary}[intrinsic characterization of $\langle\Theta\rangle$]
If $F$ belongs to $\Shard(\tM_\bR,\bSi)$ the
$\hat{\beta}^! F$ belongs to $\Shard(M_\bR,\bSi)$. 
The following square of functors commutes up to natural isomorphism:
$$
\begin{CD}
\langle \Theta\rangle_{\tSi} @>{\cong}>> \Shard(\tM_\bR,\tSi)\\
@V{\hat{\beta}^!}VV   @V{\hat{\beta}^!}VV\\
\langle \Theta\rangle_{\bSi} @>{\cong}>> \Shard(M_\bR,\bSi)
\end{CD}
$$
where all the arrows are quasi-equivalences of dg categories.
\end{corollary}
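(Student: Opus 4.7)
The plan is to paste together three previously established results: Lemma \ref{Theta-updown}, which computes $\hat\beta^!$ on the generators $\Theta(\tsi,\chi)$; Theorem \ref{thm:mainfirst}, which exhibits the left vertical arrow as a quasi-equivalence of the triangulated envelopes $\ltr_{\tSi}$ and $\ltr_{\bSi}$; and Theorem \ref{thm:thetagens} (together with the corresponding non-stacky version from \cite[Section 5]{more}), which intrinsically identifies $\ltr$ with the $\Shard$ category.

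First I would install the horizontal arrows. Theorem \ref{thm:thetagens} applied to $\bSi$ gives a quasi-equivalence $\ltr_{\bSi}\stackrel{\sim}{\to}\Shard(M_\bR,\bSi)$. The analogous statement from \cite[Section 5]{more}, applied to the non-stacky fan $\tSi\subset\tN_\bR$ defining the smooth toric variety $U$, produces $\ltr_{\tSi}\stackrel{\sim}{\to}\Shard(\tM_\bR,\tSi)$. The left vertical arrow is already known to be a quasi-equivalence by Theorem \ref{thm:mainfirst}, and by Lemma \ref{Theta-updown} it sends the generator $\Theta(\tsi,\chi)$ to $\Theta(\si,\chi)$.

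Next I would verify that $\hat\beta^!$ restricts to a functor between the shard-sheaf categories. Since $\hat\beta:M_\bR\hookrightarrow\tM_\bR$ is an injective $\bR$-linear map, it is a closed embedding of real vector spaces, so $\hat\beta^!$ preserves boundedness, constructibility, and the finite-dimensional-stalk condition without any effort. The only nontrivial point is the singular-support constraint. Rather than compute $\hat\beta^!$ microlocally on an arbitrary shard sheaf, I would resolve each $F\in\Shard(\tM_\bR,\tSi)$ as a bounded complex of generators $\Theta(\tsi,\chi)$ via the left horizontal equivalence, apply $\hat\beta^!$ termwise using Lemma \ref{Theta-updown} to obtain a bounded complex of $\Theta(\si,\chi)$'s, and read off from Proposition \ref{prop:SStheta} that this complex has singular support in $\LbS$.

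Commutativity of the square is then automatic: all four functors in play send $\Theta(\tsi,\chi)$ to $\Theta(\si,\chi)$, and each of the four categories is generated as a triangulated envelope by these objects, so agreement on generators propagates to agreement everywhere. The only genuine input is Lemma \ref{Theta-updown}; the rest is packaging of equivalences already in hand. The main obstacle, if it deserves the name, is purely organizational --- ensuring that $\hat\beta^!$ does not leave the shard-sheaf subcategory, which is exactly the role of the detour through a $\Theta$-resolution.
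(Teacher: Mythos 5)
Your proposal is correct and takes essentially the same route as the paper, which simply says the corollary follows by ``combining Lemma~\ref{Theta-updown}, Theorem~\ref{thm:thetagens}, and the results in [more, Section~5]''; you have spelled out the bookkeeping that the paper leaves implicit (resolve by $\Theta$-generators, apply $\hat\beta^!$ termwise via Lemma~\ref{Theta-updown}, invoke Proposition~\ref{prop:SStheta} for the singular-support bound, propagate agreement on generators through the triangulated envelopes). One small imprecision: after resolving $\hat\beta^!F$ by finitely many $\Theta(\si,\chi)$'s, the conclusion you want is that $\SS(\hat\beta^!F)$ lies in a single \emph{shard arrangement} (a finite union $\bigcup_i (\si_i)^\perp_{\chi_i}\times-\si_i$), not merely in the larger conical Lagrangian $\LbS$; finiteness of the resolution gives this, but it is worth saying explicitly since membership in $\Shard(M_\bR,\bSi)$ is a stronger condition than having singular support in $\LbS$.
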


\subsection{Quasicoherent sheaves with finite fibers}
\label{sec:finfib}

We first recall a definition from \cite[Section 6]{more}.
\begin{definition}
A quasicoherent sheaf (or complex of sheaves) on a scheme $X$ has \emph{finite fibers} if for each closed point $x \in X$ we have
\begin{itemize}
\item $\Tor_i(\cO_x/\mathfrak{m}_x,\cF) := h^{-i}(\cO_x/\frakm_x \stackrel{\mathbf{L}}{\otimes} \cF)$ are finite-dimensional, and
\item $\Tor_i(\cO_x/\frakm_x,\cF) = 0$ for all but finitely many $i \in \bZ$.
\end{itemize}

\end{definition}

If $f:S_1 \to S_2$ is faithfully flat, then we may check whether a quasicoherent sheaf $\cF$ on $S_2$ has finite fibers by showing that $f^* \cF$ on $S_1$ does.  Thus we have a good notion of quasicoherent sheaves with finite fibers on quotient stacks:

\begin{definition}
Let $U$ be a scheme on which a group scheme $G$ acts, and let $\cX=[U/G]$ be the quotient stack.  We say a quasicoherent sheaf (or complex of sheaves) on $\cX$ has \emph{finite fibers}
if the corresponding $G$-equivariant quasicoherent sheaf (or complex of 
quasicoherent sheaves) on $U$ has finite fibers.

Suppose that the $G$-action comes from a group homomorphism $\phi:G\to \tT$
where $\tT$ acts on $U$, and both $G$ and $\tT$ are {\em abelian}.
Then the Picard stack $\cT=[\tT/G]$ acts on $\cX$.
We say a $\cT$-equivariant quasicoherent sheaf on $\cX$ has finite fibers if the corresponding $\tT$-equivariant quasicoherent sheaf on $U$ has finite fibers.
\end{definition}

\begin{remarks}
\label{rmk:finfib}
\begin{enumerate}
\item Let $\cX=[U/G]$ be a toric DM stack. Then any
coherent sheaf on $\cX$ has finite fibers. In particular,
all vector bundles and perfect complexes have finite fibers.

\item It follows from the adjunction formula
$$\dghom_{\cO_x/\frakm_x}(\cF \otimes \cO_x/\frakm_x, \cO_x/\frakm_x) \cong \dghom_X(\cF,\cO_x/\frakm_x)$$
that $\cF$ has finite fibers if and only if $\Ext^i(\cF,\cO_x/\frakm_x)$ is finite-dimensional for all $i$ and vanishes for almost all $i$.
\end{enumerate}
\end{remarks}

Recall that there is a quasi-equivalence of dg categories (see Theorem \ref{thm:mainfirst})
$$
q: \langle \Theta'\rangle_{\tSi} \stackrel{\cong}{\to} \langle \Theta'\rangle_{\bSi}.
$$
where $\langle\Theta'\rangle_{\bSi}$ (resp.
$\langle\Theta'\rangle_\tSi$) is a dg subcategory of
$\cQ^\finfib_\cT(\cX)$ (resp. $\cQ^\finfib_{\tT}(U)$).
It follows from the definition that
$$
q:\cQ^\finfib_{\tT}(U)\stackrel{\cong}{\to} \cQ^\finfib_\cT(\cX).
$$
By \cite[Theorem 6.3]{more},
the  $\langle  \Theta'\rangle_{\tSi}$ is quasi-equivalent
to $\cQ^\finfib_{\tT}(U)$.  We conclude that
$\langle\Theta'\rangle_{\bSi}$ is quasi-equivalent
to $\cQ^\finfib_\cT(\cX)$:

\begin{theorem}[intrinsic characterization of $\langle \Theta'\rangle$]
The following square of functors commutes up to natural isomorphism:
$$
\begin{CD}
\langle\Theta'\rangle_\tSi @>{\cong}>>\cQ^\finfib_{\tT}(U)\\
@V{q}VV @V{q}VV\\
\langle\Theta'\rangle_\bSi @>{\cong}>>\cQ^\finfib_\cT(\cX)
\end{CD}
$$
where all the arrows are quasi-equivalence of dg categories.
\end{theorem}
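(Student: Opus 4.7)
The plan is to deduce this theorem formally from three pieces already in hand: the non-stacky intrinsic characterization \cite[Theorem 6.3]{more}, which supplies the top horizontal quasi-equivalence $\langle\Theta'\rangle_\tSi \stackrel{\cong}{\to}\cQ^\finfib_{\tT}(U)$; the already-established left vertical quasi-equivalence $q:\langle\Theta'\rangle_\tSi \stackrel{\cong}{\to}\langle\Theta'\rangle_\bSi$ from Theorem \ref{thm:mainfirst}; and, on the right, the definitional identification $\cQ_\cT(\cX)=\cQ_{\tT}(U)$ from Section \ref{sec:global-quotient}. Once the square is shown to commute with three of the four arrows being quasi-equivalences, the fourth (the bottom horizontal) is forced to be one as well, which is exactly the content of the theorem.

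First I would verify that the right-hand vertical functor $q$ really is a quasi-equivalence. Since the Picard stack $\cT=[\tT/G_\bSi]$ acts on $\cX=[U/G_\bSi]$ via the inclusion $G_\bSi\hookrightarrow\tT$, the category $\cQ_\cT(\cX)$ was \emph{defined} to be $\cQ_{\tT}(U)$; the $q$ here is literally this identification. The finite-fiber subcategories also match tautologically, because a $\cT$-equivariant quasicoherent sheaf on $\cX$ was declared to have finite fibers precisely when the corresponding $\tT$-equivariant sheaf on $U$ does. Thus the right vertical is an isomorphism of dg categories, not just a quasi-equivalence.

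Next I would check commutativity of the square. The top horizontal inclusion from \cite[Theorem 6.3]{more} is simply the tautological embedding of $\langle\Theta'\rangle_\tSi$ into $\cQ^\finfib_{\tT}(U)$, and after composition with the right vertical it sends each generator $\Theta'(\tsi,\chi)=\iota_{\tsi*}\cO_{U_\tsi}(\chi)$ to itself, now viewed inside $\cQ^\finfib_\cT(\cX)$. On the other hand, the left vertical $q$ takes $\Theta'(\tsi,\chi)$ to $\Theta'(\si,\chi)=j_{\si*}\cO_{\cX_\si}(\chi)$ by Lemma \ref{Theta-prime-updown}, and these two sheaves agree under the identification $\cQ_{\tT}(U)=\cQ_\cT(\cX)$ (exactly the content of \eqref{eqn:Theta-prime-updown}). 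Commutativity on generators extends to the whole dg categories by the triangulated envelope construction, since $q$ and the horizontal inclusions are dg functors.

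With the square commutative and three of the four arrows known to be quasi-equivalences, the remaining arrow $\langle\Theta'\rangle_\bSi \to \cQ^\finfib_\cT(\cX)$ is a quasi-equivalence as well. I do not expect any real obstacle here: the only mildly subtle point is the bookkeeping in step one, namely confirming that the definition of finite fibers on the stack is arranged so as to match the finite-fiber condition on $U$ exactly (rather than up to some auxiliary twist), which is immediate from Section \ref{sec:finfib}. All the genuine work was done in proving Theorem \ref{thm:mainfirst} and \cite[Theorem 6.3]{more}; the theorem at hand is their formal combination.
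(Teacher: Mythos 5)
Your proposal is correct and follows essentially the same route as the paper: the paper likewise combines the left vertical quasi-equivalence from Theorem \ref{thm:mainfirst}, the definitional identification $\cQ^\finfib_{\tT}(U)\cong\cQ^\finfib_\cT(\cX)$ on the right, and \cite[Theorem 6.3]{more} for the top horizontal, then concludes the bottom arrow is a quasi-equivalence. Your explicit check of commutativity on the generators $\Theta'(\tsi,\chi)$ via Lemma \ref{Theta-prime-updown} just makes precise what the paper leaves implicit.
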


\subsection{Finite fibers and shard arrangements}

After the main results of Section \ref{sec:shard} and Section \ref{sec:finfib}, and Theorem \ref{thm:mainfirst}, we have:

\begin{theorem}
Let $\cX_\bSi$ be a toric DM stack defined by a stacky fan $\bSi=(N,\Si,\beta)$, 
and let $\tSi$ and $U$ be defined as in Section \ref{sec:toricDM}.
Then the following square of functors commutes up to natural isomorphism:
$$
\begin{CD}
\cQ_{\tT}^\finfib(U) @>{\kappa_\tSi}>> \Shard(\tM_\bR,\tSi)\\
@V{q}VV @VV{\hat{\beta}^!}V\\
\cQ_\cT^\finfib(\cX_\bSi) @>{\kappa_\bSi}>>  \Shard(M_\bR, \bSi).
\end{CD}
$$
where all the arrows are quasi-equivalence of monoidal dg  categories.
\end{theorem}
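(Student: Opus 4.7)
The plan is to assemble the theorem directly from the intrinsic characterizations established in Sections \ref{sec:shard} and \ref{sec:finfib}, together with the commutative square of Theorem \ref{thm:mainfirst}. In more detail, Theorem \ref{thm:mainfirst} gives a square
$$
\begin{CD}
\langle\Theta'\rangle_{\tSi} @>{\kappa_{\tSi}}>>  \langle \Theta\rangle_{\tSi}\\
@V{q}VV  @VV{\hat{\beta}^!}V \\
\langle\Theta'\rangle_{\bSi} @>{\kappa_{\bSi}}>> \langle \Theta\rangle_{\bSi}
\end{CD}
$$
in which all four arrows are quasi-equivalences of triangulated dg categories. The intrinsic characterization of $\langle\Theta\rangle$ identifies the right column with $\Shard(\tM_\bR,\tSi)\to \Shard(M_\bR,\bSi)$, and the intrinsic characterization of $\langle\Theta'\rangle$ identifies the left column with $\cQ_{\tT}^\finfib(U)\to \cQ_\cT^\finfib(\cX_\bSi)$. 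Each of these identifications was already shown to intertwine the vertical functors $q$ and $\hat\beta^!$, so pasting the three squares yields the desired commutative square, and all four of its arrows are quasi-equivalences of dg categories.

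First I would state this pasting explicitly: the bottom face is obtained by stacking
$$
\cQ_{\tT}^\finfib(U)\;\cong\; \langle\Theta'\rangle_\tSi \;\stackrel{\kappa_\tSi}{\longrightarrow}\; \langle\Theta\rangle_\tSi \;\cong\; \Shard(\tM_\bR,\tSi)
$$
on top of the corresponding row for $\bSi$, and the commutativity of the composite square reduces to the commutativity of the three constituent squares already proved. Then I would invoke Remarks \ref{rmk:finfib}(1) to note that the restriction to $\Perf_\cT(\cX_\bSi)$ (the case of actual interest for CCC) sits inside $\cQ_\cT^\finfib(\cX_\bSi)$.

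The only remaining point is the monoidal statement. Here I would use Corollary \ref{cor:tensor1}, which shows that $\kappa_\bSi:\langle\Theta'\rangle_\bSi \to \langle\Theta\rangle_\bSi$ is monoidal with tensor product on the left and convolution on the right, and analogously for $\tSi$. The vertical functors $q$ and $\hat\beta^!$ are both induced by the morphism of stacky fans $\bSi\to \bSi^{\textup{lift}}$ (equivalently the map $\beta:\tN\to N$ and its induced $\hat\beta:M_\bR\to \tM_\bR$); applying Example \ref{ex:diagonal} and Theorem \ref{thm:functoriality} (functoriality for the diagonal, which is intertwined by $u^*$ and $v_!$) shows that $q$ intertwines tensor products and $\hat\beta^!$ intertwines convolutions. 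Combining these monoidal properties with the square above gives the final monoidal quasi-equivalence.

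The hard part, in principle, is making the monoidal compatibility precise: one must check that the monoidal structures transferred through the intrinsic identifications $\langle\Theta'\rangle\cong \cQ^\finfib$ and $\langle\Theta\rangle\cong \Shard$ agree with the ones inherited from $\cQ^\finfib$ (tensor product) and $\Shard$ (convolution) respectively. This follows because both of these identifications are full embeddings of monoidal triangulated dg subcategories which happen to be quasi-equivalences, so the monoidal structure on the larger category restricts to the one on $\langle\Theta'\rangle$ (resp.\ $\langle\Theta\rangle$); since tensor product of finite-fiber sheaves remains of finite fiber, and convolution of shard sheaves remains a shard sheaf, there is no escape from the subcategories and the matching is automatic.
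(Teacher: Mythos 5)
Your pasting argument is exactly the paper's intended proof: the theorem is stated there as an immediate consequence of Theorem \ref{thm:mainfirst}, the intrinsic characterization of $\ltr$ from Section \ref{sec:shard}, and the intrinsic characterization of $\ltrp$ from Section \ref{sec:finfib}, and you carry out the pasting explicitly. The one imprecision is your invocation of Example \ref{ex:diagonal} and Theorem \ref{thm:functoriality} to get monoidality of the vertical arrows: that theorem concerns the pair $(u^*,v_!)$ coming from a morphism of stacky fans $f$ with $\Si_1$ complete and $f$ injective, which does not describe the pair $(q,\hat\beta^!)$ (indeed $\beta:\tN\to N$ need not be injective, and $\tSi$ is essentially never complete). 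The monoidality you want is cheaper than that: $q$ is by definition the tautological identification $\cQ_{\tT}(U)\cong\cQ_\cT(\cX_\bSi)$ and so trivially respects tensor products, and then $\hat\beta^!\simeq\kappa_\bSi\circ q\circ\kappa_\tSi^{-1}$ is a composite of monoidal quasi-equivalences (using Corollary \ref{cor:tensor1} for the horizontal arrows and the commutativity already established in Theorem \ref{thm:mainfirst}), hence is itself monoidal; alternatively one can run a direct base change computation showing $\hat\beta^!$ commutes with convolution. Your final paragraph about the monoidal structures on $\cQ^\finfib$ and $\Shard$ restricting correctly to $\ltrp$ and $\ltr$ is right and is what is implicitly being used.
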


\section{Perfect Complexes  and Compactly Supported Constructible Sheaves}
\label{sec:Perf-cc}

In this section, $\chi=\chi_\bSi$ is a {\em complete} DM stack,
i.e., it is defined by a stacky fan $\bSi =(N,\Si,\beta)$ where
$\Si$ is a complete fan in $N_\bR$. The coarse
moduli space $X=X_\Si$ of $\cX$ is a complete simplicial toric variety.

\subsection{Generating sets of line bundles}
\label{sec:gensetlinbun}

Let $U\subset \bC^r$, $G_\bSi$ be defined as in Section \ref{sec:toricDM},
so that $\cX=[U/G_\bSi]$. Let $L_\vc$, $L_\vc'$, and $\cL_\vc$
be defined as in Section \ref{sec:Lc}.

\begin{proposition}\label{resolve}
Let $\cF$ be a $\cT$-equivariant coherent sheaf on $\cX$. Then
there is
a $\cT$-equivariant free resolution:
$$
0\to \cV_l\to \cdots \to \cV_0 \to \cF\to 0
$$
where each $\cV_i$ is the direct sum of $\cT$-equivariant line bundles
$\{ \cL_{\vc}\mid \vc\in \bZ^r \}$.
\end{proposition}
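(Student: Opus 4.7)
The plan is to lift the problem from $\cX$ to the smooth affine variety $\bC^r$ and invoke Hilbert's syzygy theorem for graded polynomial rings. By Proposition \ref{rigid}(b), a $\cT$-equivariant coherent sheaf $\cF$ on $\cX=[U/G_\bSi]$ corresponds to a $\tT$-equivariant coherent sheaf, which we still call $\cF$, on the quasi-affine toric variety $U=\bC^r-Z(I_\Si)$. The open embedding $j:U\hookrightarrow \bC^r$ is $\tT$-equivariant. First I would extend $\cF$ to a $\tT$-equivariant coherent sheaf $\cF'$ on $\bC^r$ such that $\cF'|_U\cong\cF$. For this, one takes $j_*\cF$, a $\tT$-equivariant quasicoherent sheaf on $\bC^r$, and then picks out a $\tT$-stable coherent subsheaf whose restriction to $U$ still generates $\cF$; this is possible because any finitely generated subsheaf of $j_*\cF$ is contained in a $\tT$-invariant one (the $\tT$-action being locally finite on global sections, we may assume generators are weight vectors).

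Since $\bC^r=\Spec\bC[z_1,\ldots,z_r]$ is affine, a $\tT$-equivariant coherent sheaf $\cF'$ on $\bC^r$ is the same as a finitely generated $\tM$-graded module $M$ over $\bC[z_1,\ldots,z_r]$, where the $\tM$-grading records the weights with respect to the standard $\tT$-action. Now I would apply Hilbert's syzygy theorem to $M$: because $\bC[z_1,\ldots,z_r]$ is a graded regular ring of global dimension $r$, there exists a graded free resolution
\begin{equation*}
0\to F_l\to \cdots\to F_1\to F_0\to M\to 0
\end{equation*}
of length $l\le r$, where each $F_i$ is a finite direct sum of shifted free modules $\bC[z_1,\ldots,z_r](\vc_{i,k})$ for various $\vc_{i,k}\in\bZ^r$. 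As noted in Section \ref{sec:Lc}, these shifted graded free modules correspond precisely to the $\tT$-equivariant line bundles $\tL'_{\vc}$ on $\bC^r$.

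Finally, I would restrict the resolution to $U$ by applying the exact functor $j^*$, which yields
\begin{equation*}
0\to j^*F_l\to\cdots\to j^*F_0\to \cF\to 0,
\end{equation*}
with each $j^*F_i$ a direct sum of $\tL_\vc=\tL'_\vc|_U$. Descending along the quotient $U\to\cX$ via the equivalence of Proposition \ref{rigid}(b) turns this into a $\cT$-equivariant resolution of $\cF$ by direct sums of $\cL_\vc$, giving the required statement with $l\le r$. The main technical obstacle is the extension step: one must verify that $\cF$ admits a $\tT$-equivariant coherent extension across the closed subscheme $Z(I_\Si)\subset\bC^r$, which is a standard consequence of the local finiteness of the $\tT$-action on $j_*\cF$ combined with the fact that any quasicoherent sheaf on a Noetherian scheme is the filtered colimit of its coherent subsheaves; every other step is formal.
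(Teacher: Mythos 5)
Your argument is correct and follows essentially the same strategy as the paper: lift to the smooth toric chart $U$, extend to a $\tT$-equivariant coherent sheaf on $\bC^r$, apply graded syzygies to the corresponding $\tM$-graded module over $\bC[z_1,\ldots,z_r]$, and restrict back. The one place you diverge is the extension step. The paper sets $S=H^0(U,\tcF)$, cites \cite[Lemma 4.7]{BH} to conclude $S$ is a finitely generated $A$-module (this is where the completeness hypothesis enters, ensuring the irrelevant locus has codimension $\geq 2$ so that $j_*\tcF$ is itself coherent), and then takes $\tcF'=\widetilde{S}=j_*\tcF$ as the canonical extension. You instead take any coherent subsheaf of $j_*\cF$ restricting to $\cF$ and then pass to its $\tM$-graded saturation; this is the general Noetherian extension argument and avoids the Borisov--Horja lemma, but it produces a non-canonical extension. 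Both routes work, and yours is in fact marginally more portable (it would apply without completeness), while the paper's is shorter given that \cite{BH} is already being invoked in this section. One small wording fix: ``whose restriction to $U$ still generates $\cF$'' should really be ``whose restriction to $U$ equals $\cF$''; since the restriction is automatically a subsheaf of $\cF$, the two amount to the same thing, but the latter is what is actually needed.
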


\begin{proof} This can be proved by a slight modification of the
proof of \cite[Theorem 4.6]{BH}. We outline the argument here
and refer to \cite[Section 4]{BH} for details. 
  
Let $\tcF$ be the $\tT$-equivariant coherent sheaf on $U$ which
descends to the $\cT$-equivariant coherent sheaf $\cF$ on $\cX=\cX_{\bSi}$.  
It suffices to show that there exists a $\tT$-equivariant free resolution
\begin{equation}\label{eqn:resolve-tcF}
0\to \tcV_l \to \cdots \to \tcV_0\to \tcF\to 0
\end{equation}
where each $\tcV_i$ is the direct sum of $\tT$-equivariant
line bundles $\{ \tL_{\vc}\mid \vc\in \bZ^r\}$.

Let $S=H^0(U,\tcF)$, and let $A=\bC[z_1,\ldots,z_r]$. By 
\cite[Lemma 4.7]{BH}, $S$ is a finitely generated $A$-module. 
Let $\tcF'$ be the coherent sheaf on $\bC^r=\Spec A$ determined
by the finitely generated $A$-module $S$. Then
$\tcF'\bigr|_U =\tcF$.

The $\tT$-linearization on $\tcF$ gives rise to a $\tT$-action on $S$, which
is compatible with the $\tT$-action on $A$.  Therefore $A$ and $S$ are
graded by $\tM=\Hom(\tT,\bC^*)$. There is a surjection
$$
F_0\to S\to 0
$$
where $F_0$ is a direct sum of rank one $A$-modules generated by
eigenelements of $\tT$. Let $I\subset A$ be the maximal
idea generated by $z_1,\ldots, z_r$. We may choose $F_0\to S$ such 
that the surjective map $F_0/IF_0\to S/IS$ of $\bC$-vector spaces
is an isomophism. Then the kernel $S_1$ of $F_0\to S$ is contained
in $IF_0$. We replace $S$ by $S_1$ and repeat the procedure, and we
obtain an exact sequence of $\tT$-equivariant $A$-modules
$$
F_1\stackrel{\phi_1}{\to} F_0\to S\to 0,
$$
where $F_1$ is a direct sum of rank one $A$-modules generated
by eigenelements of $\tT$. We continue this procedure and obtain 
a $\tT$-equivariant, free resolution
$$
 \cdots \to F_i\stackrel{\phi_i}{\to} F_{i-1} \to \cdots \to F_1\stackrel{\phi_1}{\to} F_0\to S\to 0, 
$$
where the image of each $\phi_i:F_i\to F_{i-1}$ is contained in $IF_{i-1}$.
The above resolution is a minimal graded resolution of the finitely generated
graded $A$-module $S$, so it has finite length $l\leq r$ \cite[Chapter 19]{Es}.
Therefore we have a finite, $\tT$-equivriant free resolution
\begin{equation}\label{eqn:resolve-M}
0\to F_l\stackrel{\phi_l}{\to} \cdots \stackrel{\phi_2}{\to} F_1\stackrel{\phi_1}{\to }  F_0\to M\to 0,
\end{equation}
where each $F_i$ is a direct sum of rank one $A$-modules generated by
eigenelements of $\tT$. Eigenelements of $\tT$ are of the form
$z_1^{-c_1}\cdots z_r^{-c_r}$, $c_1,\ldots, c_r\in \bZ$.
The resolution \eqref{eqn:resolve-M} defines a finite, $\tT$-equivariant, free resolution
\begin{equation}\label{eqn:resolve-tcFprime}
0\to \tcV'_l\stackrel{\phi_l}{\to} \cdots \stackrel{\phi_2}{\to} \tcV'_1\stackrel{\phi_1}{\to }  \tcV'_0\to \tcF'\to 0,
\end{equation}
where each $\tcV'_i$ is a direct sum of $\tT$-equivariant line bundles $\{\tL'_{\vc}\mid \vc\in \bZ^r\}$.
Restricting \eqref{eqn:resolve-tcFprime} to $U$, we obtain
a finite, $\tT$-equivairant, free resolution of the desired form \eqref{eqn:resolve-tcF}.
\end{proof}

\begin{corollary}\label{K} 
Let $\cX$ be a toric DM stack.  
\begin{enumerate}
\item[(a)] The cohomology category of $\Perf_\cT(\cX)$ is $D_{\cT}(\cX)$, the
bounded derived category of $\cT$-equivariant coherent sheaves on $\cX$.
\item[(b)] Any element in $K_\cT(\cX)= K(\Perf_\cT(\cX))$ can be written as
a finite sum $a_1 \cL_{\vc_1}+\cdots + a_N \cL_{\vc_N}$, where $a_1,\ldots, a_N \in \bZ$.
\end{enumerate}
\end{corollary}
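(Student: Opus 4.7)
Both parts will be deduced directly from Proposition \ref{resolve}, which provides, for each $\cT$-equivariant coherent sheaf $\cF$ on $\cX$, a finite resolution
$$
0\to\cV_l\to\cdots\to\cV_0\to\cF\to 0
$$
in which every $\cV_i$ is a finite direct sum of the equivariant line bundles $\cL_{\vc}$, $\vc\in\bZ^r$.

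For part (a), I would argue that the natural dg functor from $\Perf_\cT(\cX)$ to the dg category of bounded complexes of $\cT$-equivariant coherent sheaves (up to quasi-isomorphism) induces a functor $H(\Perf_\cT(\cX))\to D_\cT(\cX)$ which is fully faithful, since for a perfect complex $\cV^\bullet$ and any bounded-coherent $\cF^\bullet$, $\Hom$ computed in $\cQ_\cT(\cX)$ and in $D_\cT(\cX)$ agree. The remaining task is essential surjectivity. Proposition \ref{resolve} says every $\cT$-equivariant coherent sheaf has a finite locally free resolution of length at most $r$; hence the $\cT$-equivariant coherent sheaves on $\cX$ have uniformly bounded Tor-dimension. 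By a standard Cartan--Eilenberg / double complex argument, any bounded complex $\cF^\bullet\in D_\cT(\cX)$ is then quasi-isomorphic to a bounded complex of direct sums of $\cL_\vc$'s, which is a perfect complex. This proves (a).

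For part (b), I use (a) so that $K_\cT(\cX)=K(\Perf_\cT(\cX))=K_0(D_\cT(\cX))$. Every class is of the form $[\cF^\bullet]$ for some bounded complex of coherent sheaves, and the Euler-characteristic relation in $K$-theory gives $[\cF^\bullet]=\sum_i (-1)^i[\cF^i]$. Applying Proposition \ref{resolve} to each $\cF^i$ yields $[\cF^i]=\sum_{j=0}^{l_i}(-1)^j[\cV_j^{(i)}]$, and since each $\cV_j^{(i)}$ is a finite direct sum $\bigoplus_k \cL_{\vc_{ijk}}$, one has $[\cV_j^{(i)}]=\sum_k[\cL_{\vc_{ijk}}]$. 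Substituting back expresses $[\cF^\bullet]$ as a finite $\bZ$-linear combination of the $[\cL_\vc]$, as required.

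The only genuine content is Proposition \ref{resolve}, which has been proved already, so both parts are essentially formal consequences; the main piece of care is in part (a), where I must pass from a per-sheaf resolution to a resolution of an arbitrary bounded complex. I expect this to be the principal (though routine) obstacle, and I would handle it by the standard truncation-and-induction construction of a Cartan--Eilenberg resolution, relying on the uniform bound $l\leq r$ on the length of the resolutions in Proposition \ref{resolve} to keep the resulting complex bounded.
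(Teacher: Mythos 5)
The paper gives no proof of this corollary at all: it follows Proposition~\ref{resolve} directly, and the reader is expected to supply the formal argument. Your proposal supplies precisely the intended argument, and both parts do rest entirely on Proposition~\ref{resolve}. Part (b) is correct as written.

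In part (a), the essential-surjectivity step is the only nontrivial point, and there you invoke a "Cartan--Eilenberg / double-complex argument" to produce a bounded complex of direct sums of $\cL_{\vc}$'s quasi-isomorphic to a given $\cF^\bullet$. Be a bit careful here: the line bundles $\cL_{\vc}$ are not projective objects of $\mathrm{Coh}_\cT(\cX)$, so the horseshoe-lemma lifting needed for a literal Cartan--Eilenberg double complex is not automatic. The argument that does work is the one you allude to in your last sentence: build a bounded-above complex of sums of $\cL_{\vc}$'s quasi-isomorphic to $\cF^\bullet$, and then truncate, using the uniform bound $l\leq r$ from Proposition~\ref{resolve} to see that the truncation term is a vector bundle (this is the SGA6 / Thomason--Trobaugh pseudo-coherence argument). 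It is worth knowing that there is a still simpler route which bypasses the construction of an explicit resolution altogether: $H(\Perf_\cT(\cX))$ is a triangulated subcategory of $H(\cQ_\cT(\cX))$; Proposition~\ref{resolve} says every shift of a $\cT$-equivariant coherent sheaf is perfect; then by induction on cohomological amplitude, using the distinguished triangle $\tau^{\leq a}\cF^\bullet \to \cF^\bullet \to \tau^{>a}\cF^\bullet \to [1]$ and closure of the perfect subcategory under cones, every bounded complex with coherent cohomology is perfect. Either way, your conclusion is correct.
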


\begin{theorem}\label{Lc-generate}
Let $\cX$ be a toric DM stack. Then $\Perf_\cT(\cX)$ is generated by $\cT$-equivariant line bundles.
\end{theorem}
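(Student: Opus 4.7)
The plan is to bootstrap from Proposition \ref{resolve}, which supplies for every $\cT$-equivariant coherent sheaf $\cF$ a \emph{finite} $\cT$-equivariant resolution by direct sums of the line bundles $\{\cL_\vc\mid \vc\in \bZ^r\}$. An object of $\Perf_\cT(\cX)$ is, by definition, quasi-isomorphic to a bounded complex of $\cT$-equivariant vector bundles $\cV^\bullet = (\cdots \to \cV^{-1}\to \cV^0 \to \cV^1\to\cdots)$. The goal is to replace $\cV^\bullet$ by a bounded complex whose terms are direct sums of line bundles $\cL_\vc$, inside the triangulated dg category generated by these line bundles.

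The first step is to apply Proposition \ref{resolve} to each individual $\cV^i$ to obtain a $\cT$-equivariant resolution $\cW^{\bullet,i}\to \cV^i$, concentrated in degrees $[-l_i,0]$ where $l_i\leq r$, with each $\cW^{j,i}$ a finite direct sum of line bundles $\cL_\vc$. The second step is to use a standard Cartan--Eilenberg--type construction to lift these term-wise resolutions to a single double complex $\cW^{\bullet,\bullet}\to \cV^\bullet$ of $\cT$-equivariant sheaves, whose rows are the resolutions above. Because each resolution has length bounded by $r$ and the complex $\cV^\bullet$ is itself bounded, the double complex lives in a bounded region of bidegrees, so its totalization $\mathrm{Tot}(\cW^{\bullet,\bullet})$ is a bounded complex whose terms are finite direct sums of line bundles $\cL_\vc$, and the induced map $\mathrm{Tot}(\cW^{\bullet,\bullet})\to \cV^\bullet$ is a quasi-isomorphism by the usual spectral sequence argument.

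Equivalently, and perhaps more cleanly in dg language, one can argue by induction on the amplitude of $\cV^\bullet$. If $\cV^\bullet$ is concentrated in a single degree it is a vector bundle, hence by Proposition \ref{resolve} (viewing the vector bundle as a coherent sheaf) it sits in a finite iterated cone of objects of the form $\cL_\vc$, which places it in the triangulated subcategory generated by the line bundles. For the inductive step, use the stupid truncation triangle $\tau_{\leq k-1}\cV^\bullet \to \cV^\bullet \to \cV^k[-k]$ and observe that both outer terms lie in the triangulated subcategory generated by the line bundles by the inductive hypothesis, hence so does $\cV^\bullet$.

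The main potential obstacle is making sure the constructions land in the dg (not merely triangulated) envelope of the line bundles and that the resolutions can be chosen $\cT$-equivariantly throughout; both points are already settled by Proposition \ref{resolve}, which produces $\cT$-equivariant finite resolutions of finite length (bounded by $r$ via the Hilbert syzygy argument in its proof). Combined with Corollary \ref{K}(a), identifying the cohomology category with $D_\cT(\cX)$, this yields the desired generation statement for $\Perf_\cT(\cX)$.
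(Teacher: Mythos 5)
Your argument is essentially correct, but it takes a genuinely different route from the paper. The paper does \emph{not} deduce the theorem directly from Proposition \ref{resolve}: it only extracts from that proposition that the triangulated dg subcategory $\cA$ generated by the line bundles is \emph{dense} in $\Perf_\cT(\cX)$ (every object is a direct summand of an object of $\cA$), and then invokes Thomason's classification of dense triangulated subcategories \cite[Theorem 2.1]{Th} together with the $K$-theoretic statement of Corollary \ref{K}(b) to conclude $\cA=\Perf_\cT(\cX)$. You instead perform a direct d\'evissage: with the paper's definition of $\Perf_\cT(\cX)$ (objects globally quasi-isomorphic to bounded complexes of vector bundles), your stupid-truncation induction reduces to the case of a single vector bundle, which Proposition \ref{resolve} places in $\cA$ as an iterated cone; this is clean, avoids $K$-theory and Thomason entirely, and in fact yields containment rather than mere density. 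The trade-off is that the paper's route is the one that survives when one only controls the thick (idempotent-complete) envelope --- e.g.\ under a weaker, local definition of perfection --- and it parallels the argument of \cite{more}. One caution on your first, Cartan--Eilenberg variant: lifting the differentials of $\cV^\bullet$ to maps between the term-wise resolutions requires a projectivity/lifting property that the line bundles $\cL_{\vc}$ do not enjoy as objects of $\cQ_\cT(\cX)$; to make that version rigorous you should carry out the construction at the level of $\tM$-graded modules over the Cox ring $A=\bC[z_1,\ldots,z_r]$, where the relevant free modules are honestly projective, and only then restrict to $U$ and descend --- or simply rely on your truncation argument, which needs no such lifting. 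Also, the citation of Corollary \ref{K}(a) in your last sentence is not needed; it is part (b) that the paper's proof uses, and your argument uses neither.
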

\begin{proof}
Let $\cA$ be the full triangulated dg subcategory of $\Perf_\cT(\cX)$
generated by $\cT$-equivariant line bundles. We need to
show that $\cA=\Perf_\cT(\cX)$. By Proposition \ref{resolve},  $\cA$
is a full, dense triangulated subcategory of $\Perf_\cT(\cX)$. (Recall
that a triangulated subcategory is called {\em dense} if every
object is a direct summand of an object in the subcategory.)
By \cite[Theorem 2.1]{Th}, to show that $\cA=\Perf_\cT(\cX)$ it suffices
to show that the subgroup $K(\cA)$ of $K(\Perf_\cT(X)) = K_\cT(\cX)$
is equal to $K_\cT(\cX)$, which follows from Corollary \ref{K} (b).
\end{proof}

\subsection{Perfect complexes and compact support}
\label{sec:perfsupp}
In this section, we prove that for any complete toric DM stack 
$\cX_\bSi$ there is a quasi-equivalence of monoidal dg categories
$$
\Perf_\cT(\cX_{\bSi}) \cong \Sh_{cc}(M_\bR;\LbS).
$$
As in \cite[Section 7]{more},
the proof  makes use of the monoidal structure, in particular the fact that a complex of quasicoherent sheaves is
perfect if and only if it is \emph{dualizable}.
The argument in the proof of \cite[Theorem 7.4]{more} shows the following.

\begin{theorem} \label{thm:duality}
Suppose $\cX$ is a complete toric DM stack, and let
$\kappa_{\bSi}:\Perf_\cT(\cX) \hookrightarrow \Sh_c(M_\bR;\LbS)$ 
denote the functor defined in Section \ref{sec:ccc-line}.
There is a natural isomorphism
$$
\kappa_{\bSi}(\cF^\vee) \cong -\cD(\kappa_{\bSi}(\cF)).
$$
\end{theorem}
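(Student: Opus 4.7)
The plan is to mimic the proof of \cite[Theorem 7.4]{more}, adapted to the stacky setting.  The principal inputs are the monoidality of $\kappa_\bSi$ (Corollary \ref{cor:tensor1}), the generation of $\Perf_\cT(\cX)$ by line bundles (Theorem \ref{Lc-generate}), and the explicit computation of $\kappa_\bSi$ on a line bundle via the \v{C}ech-type complex $P(\uchi)$ of Definition \ref{Puchi}.

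First, I would reduce to $\cT$-equivariant line bundles.  Both $\cF \mapsto \kappa_\bSi(\cF^\vee)$ and $\cF \mapsto -\cD(\kappa_\bSi(\cF))$ are contravariant exact functors on $\Perf_\cT(\cX)$, so by Theorem \ref{Lc-generate} it suffices to construct a natural isomorphism on the full subcategory of line bundles $\cO_\cX(\uchi)$ indexed by twisted polytopes $\uchi$.  For such a $\uchi$ we have $\cO_\cX(\uchi)^\vee = \cO_\cX(-\uchi)$, and by Theorem \ref{thm:algebraic}(1), $\kappa_\bSi(\cO_\cX(\pm\uchi)) \cong P(\pm\uchi)$.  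So the task becomes the construction of a natural isomorphism
$$
P(-\uchi) \;\cong\; -\cD\bigl(P(\uchi)\bigr)
$$
compatible with the morphisms coming from inclusions of twisted polytopes.

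Second, one computes $-\cD(P(\uchi))$ termwise.  Verdier duality takes each costandard summand $\Theta(C_{i_0\cdots i_k},\chi_{i_0\cdots i_k}) = j_!\omega_{(C_{i_0\cdots i_k})^\vee_{\chi_{i_0\cdots i_k}}{}^\circ}$ to $j_*\bC$ on the same open cone, and then pulling back along the negation map $n\colon M_\bR\to M_\bR$ places this sheaf on the negated open cone.  Because $-(\si^\vee_\chi)^\circ$ and the open cones $(C^\vee_{-\chi})^\circ$ that appear in $P(-\uchi)$ coincide up to the appropriate opposite-cone identification, one obtains a termwise match between $n^*\cD P(\uchi)$ and $P(-\uchi)$, up to a global shift by $\dim M_\bR$ accounting for $\omega_{M_\bR}$.

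The main obstacle is promoting this termwise agreement to an isomorphism of \emph{complexes}: one must verify that the dualized \v{C}ech differentials of $P(\uchi)$ correspond correctly, including signs, to the \v{C}ech differentials of $P(-\uchi)$.  Rather than matching differentials by hand, the cleaner route is conceptual.  Since $\kappa_\bSi$ is a quasi-equivalence of monoidal dg categories (Corollary \ref{cor:tensor1}) and $\cO_\cX(\uchi)$ is tensor-invertible with inverse $\cO_\cX(-\uchi)$, the object $\kappa_\bSi(\cO_\cX(\uchi))$ is invertible under convolution, with inverse $\kappa_\bSi(\cO_\cX(-\uchi))$.  One then verifies---using the identity $v_!(F\boxtimes G)$ and the compatibility of $\cD$ with proper base change---that for convolution-invertible objects the inverse is computed by $-\cD$.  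Uniqueness of monoidal inverses supplies the canonical isomorphism on line bundles, and naturality with respect to morphisms extends it over all of $\Perf_\cT(\cX)$.
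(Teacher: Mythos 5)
Your conceptual closing argument is the right idea, and it is essentially what the paper invokes via the reference to \cite[Theorem 7.4]{more}. Two points, however, need attention.

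First, the termwise claim in your second paragraph is incorrect. Verdier duality sends the costandard sheaf $\Theta(\si,\chi)=j_!\omega_{(\si^\vee_\chi)^\circ}$ to the \emph{standard} sheaf $j_*\bC_{(\si^\vee_\chi)^\circ}$, and after pullback along negation this becomes a standard sheaf supported on the opposite open cone $-(\si^\vee_\chi)^\circ$. By contrast $\Theta(\si,-\chi)=j_!\omega_{(\si^\vee_{-\chi})^\circ}$ is a costandard sheaf on the cone $(\si^\vee_{-\chi})^\circ$. Those two cones are opposite half-cones through $-\chi$, disjoint away from the apex, and in any case a standard sheaf is not isomorphic to a costandard one on a positive-dimensional cone. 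So there is no summand-by-summand match between $-\cD P(\uchi)$ and $P(-\uchi)$; the quasi-isomorphism is a genuine statement about complexes and cannot be read off term by term.

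Second, the conceptual route defers the essential content to the unproved assertion that for a convolution-invertible $F$ the inverse is $-\cD F$. That is the heart of the theorem and requires an argument. What is needed is the adjunction
$$
\dghom\bigl(F\star G,\,\delta_0\bigr)\;\cong\;\dghom\bigl(G,\,-\cD F\bigr),
$$
where $\delta_0$ denotes the convolution unit (the rank-one skyscraper at the origin); it is obtained by computing $a^!\delta_0$ via base change along the antidiagonal of the addition map $a$, and it identifies $-\cD$ as the functor computing monoidal duals in $(\Sh_{cc}(M_\bR),\star)$. With this in hand the argument tightens: $\kappa_\bSi$ is a fully faithful monoidal functor (Corollary \ref{cor:tensor1}), so it carries dualizable objects to dualizable objects and duals to duals; every $\cF\in\Perf_\cT(\cX)$ is dualizable with dual $\cF^\vee$; and by the adjunction above the dual of $\kappa_\bSi(\cF)$ is $-\cD(\kappa_\bSi(\cF))$. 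Hence $\kappa_\bSi(\cF^\vee)\cong -\cD(\kappa_\bSi(\cF))$ naturally in $\cF$. Once the adjunction is established, the preliminary reduction to line bundles via Theorem \ref{Lc-generate} is in fact unnecessary.
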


\begin{lemma}[toric Chow's lemma for toric DM stacks]\label{chow}
Let $\cX_{\bSi}$ be a complete toric DM stack.
Then there exists a stacky fan $\bSi'=(N,\Si', \beta')$ and
a morphism of stacky fans $f:\bSi'\to \bSi$ (equivalently, a morphism
of  toric DM stacks $u:\cX_{\bSi'}\to\cX_{\bSi}$), where
\begin{enumerate}
\item[(i)] the toric variety $X_{\Si'}$ is projective,
\item[(ii)] the morphism $X_{\Si'}\to X_\Si$ is birational.
\end{enumerate}
\end{lemma}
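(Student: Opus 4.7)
The plan is to reduce to the classical toric Chow's lemma for the underlying complete simplicial fan $\Si$, and then equip the refined fan with a stacky structure for which the identity map on $N$ defines a morphism of stacky fans in the sense of Section~\ref{sec:mor}. First, since $\Si$ is a complete simplicial fan in $N_\bR = \baN_\bR$, the classical toric Chow's lemma (see e.g.\ \cite{Fu}) provides a simplicial refinement $\Si'$ of $\Si$ with $X_{\Si'}$ projective; star subdivision preserves simpliciality, so such $\Si'$ can be arranged. Because $\Si'$ and $\Si$ share the open torus and $\Si'$ refines $\Si$, the induced toric morphism $X_{\Si'} \to X_\Si$ is automatically birational, so conditions (i) and (ii) hold.

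Next I will promote $\Si'$ to a stacky fan $\bSi' = (N, \Si', \beta')$. For each ray $\rho'_i$ of $\Si'$, let $\si(\rho'_i) \in \Si$ be the smallest cone of $\Si$ containing $\rho'_i$. The lattice $\baN_{\si(\rho'_i)}$ spans the $\bR$-span of $\si(\rho'_i)$, hence meets $\rho'_i$ in a discrete half-line, so I may choose a generator $\bar{b}'_i$ of $\rho'_i \cap \baN_{\si(\rho'_i)}$. Using the surjection $N_{\si(\rho'_i)} \twoheadrightarrow \baN_{\si(\rho'_i)}$ I lift $\bar{b}'_i$ to some $b'_i \in N_{\si(\rho'_i)} \subset N$. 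Setting $\tN' = \bigoplus_i \bZ \tb'_i$ and $\beta'(\tb'_i) = b'_i$ gives the required stacky fan; that $\rho'_1, \dots, \rho'_{r'}$ span $N_\bR$ follows from completeness of $\Si'$.

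Finally I will verify that $\id_N : N \to N$ defines a morphism of stacky fans $f: \bSi' \to \bSi$. The first axiom is immediate since $\Si'$ refines $\Si$. For the second, if $\si' \in \Si'$ is contained in $\si \in \Si$, then every ray $\rho'_i$ of $\si'$ satisfies $\si(\rho'_i) \subset \si$, so by construction $b'_i \in N_{\si(\rho'_i)} \subset N_\si$, and hence $N_{\si'} \subset N_\si$. The associated morphism of toric DM stacks $u:\cX_{\bSi'} \to \cX_{\bSi}$ then covers the birational morphism $X_{\Si'} \to X_\Si$ on coarse moduli spaces.

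The main obstacle is the second step: a naive choice of integer lift $b'_i \in N$ along $\rho'_i$ need not lie in the sublattice $N_{\si(\rho'_i)}$, in which case the identity of $N$ would fail the second axiom of a morphism of stacky fans and the stacky map $u$ would not exist. What rescues the construction is that $\baN_{\si(\rho'_i)}$, although possibly a proper sublattice of $\baN \cap \mathrm{span}(\si(\rho'_i))$, still spans the same real subspace and therefore contains lattice points on any ray inside that subspace, allowing a compatible lift to $N_{\si(\rho'_i)}$.
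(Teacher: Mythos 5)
Your construction is correct, and it takes a genuinely different route from the paper's. Both arguments begin by invoking classical toric Chow's lemma to produce a simplicial refinement $\Si'$ with $X_{\Si'}$ projective and $X_{\Si'} \to X_\Si$ birational, but the ways of promoting $\Si'$ to a stacky fan differ. The paper simply declares $\cX_{\bSi'}$ to be the fiber product $X_{\Si'} \times_{X_\Si} \cX_{\bSi}$ and asserts that this is again a toric DM stack with a morphism of stacky fans to $\bSi$; this is conceptually economical but leaves implicit why the fiber product is of the form $\cX_{\bSi'}$ and what $\bSi'$ actually is. You instead build $\bSi' = (N, \Si', \beta')$ by hand: for each new ray $\rho'_i$ you take the minimal cone $\si(\rho'_i) \in \Si$ containing it, choose a generator $\bar{b}'_i$ of $\rho'_i \cap \baN_{\si(\rho'_i)}$ (which is nonempty because $\baN_{\si(\rho'_i)}$ has finite index in $\baN \cap \mathrm{span}(\si(\rho'_i))$), and lift it to $b'_i \in N_{\si(\rho'_i)} \subset N$ via the surjection $N_{\si(\rho'_i)} \twoheadrightarrow \baN_{\si(\rho'_i)}$. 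The point you correctly flag --- that a random integer lift of a primitive generator of $\rho'_i$ in $\baN$ would generally not land in $N_{\si(\rho'_i)}$, and that one must instead use a generator of the possibly non-primitive ray lattice $\rho'_i \cap \baN_{\si(\rho'_i)}$ --- is exactly what makes the second axiom of a morphism of stacky fans ($N_{\si'} \subset N_\si$) go through for $f = \id_N$. Your check that $\beta'$ has finite cokernel is also needed (completeness of $\Si'$ gives the rays span $N_\bR$, and torsion is finite) and is implicitly satisfied. In exchange for being longer, your argument is self-contained and makes the stacky structure explicit, whereas the paper's is shorter but relies on the reader accepting that the fiber product construction stays inside the world of stacky fans.
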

\begin{proof}
By toric Chow's lemma (cf.\cite{Da}, \cite[Proposition 2.17]{Od}), there exists a refinement
$\Si'$ of $\Si$ which induces a morphism
$X_{\Si'}\to X_\Si$ of toric varieties such that (i) and (ii) hold.
$\cX_{\bSi'}$ is the toric DM stack  obtained by taking the fiber
product $X_{\Si'}\times_{X_\Si}\cX_{\bSi}$.
\end{proof}

\begin{theorem}[coherent-constructible correspondence for toric DM stacks]\label{thm:main3}
Suppose that $\cX=\cX_{\bSi}$ is a complete toric DM stack
defined by a stacky fan $\bSi=(N,\Si, \beta)$.
Then $\kappa_{\bSi}$ restricts to a quasi-equivalence of  monoidal dg 
categories $\Perf_\cT(\cX) \cong \Sh_{cc}(M_\bR;\LbS)$.
\end{theorem}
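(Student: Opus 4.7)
The plan is to upgrade the monoidal quasi-equivalence $\kappa_\bSi:\ltrp_\bSi\xrightarrow{\sim}\ltr_\bSi$ of Theorem \ref{thm:mainfirst} and Corollary \ref{cor:tensor1} to a quasi-equivalence between its subcategories $\Perf_\cT(\cX)\subset\ltrp_\bSi$ (Corollary \ref{cor:perfect}) and $\Sh_{cc}(M_\bR;\LbS)\subset\ltr_\bSi$. Two things remain: (i) $\kappa_\bSi(\Perf_\cT(\cX))\subset\Sh_{cc}(M_\bR;\LbS)$; (ii) the restriction is essentially surjective. Monoidality of the restriction follows from Corollary \ref{cor:tensor1}, since tensor products of perfects are perfect and convolutions of compactly supported sheaves on a vector space are compactly supported.

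For (i), Theorem \ref{Lc-generate} reduces matters to a single $\cT$-equivariant line bundle $\cL=\cO_\cX(\uchi)$. First suppose the coarse moduli $X_\Si$ is projective, so $\cX$ carries $\bQ$-ample line bundles. The $\bQ$-ample condition (Theorem \ref{thm:fulton3}) cuts out an open cone in the finite-dimensional real vector space of twisted polytopes; hence for any fixed $\bQ$-ample $\cO_\cX(\uchi_0)$ there is $n\gg 0$ making $\cL\otimes\cO_\cX(\uchi_0)^{\otimes n}$ again $\bQ$-ample, yielding the splitting
\begin{equation*}
\cL \;\cong\; \bigl(\cL\otimes\cO_\cX(\uchi_0)^{\otimes n}\bigr)\otimes\bigl(\cO_\cX(\uchi_0)^{\otimes n}\bigr)^{\vee}.
\end{equation*}
Theorem \ref{thm:algebraic}(2) identifies $\kappa_\bSi$ of any $\bQ$-ample bundle with a compactly supported costandard sheaf $j_!\omega_{\bfP^\circ}$, and Theorem \ref{thm:duality} identifies the image of the dual bundle with $-\cD$ of a compactly supported sheaf, again compactly supported. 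Monoidality then presents $\kappa_\bSi(\cL)$ as the convolution of two compactly supported sheaves, hence itself compactly supported. For arbitrary complete $\cX_\bSi$, apply the toric Chow's lemma (Lemma \ref{chow}) to find a refinement $\bSi'=(N,\Si',\beta')$ with $X_{\Si'}$ projective and a morphism $u:\cX_{\bSi'}\to\cX_\bSi$ induced by $\id:N\to N$; the hypotheses of Theorem \ref{thm:functoriality} are met, and the induced $v:M_\bR\to M_\bR$ is the identity, so $\kappa_\bSi(\cL)\cong\kappa_{\bSi'}(u^*\cL)$, which is compactly supported by the projective case.

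For (ii), we adapt the dualizability strategy of \cite[Theorem 7.4]{more}. A quasicoherent complex on a DM stack is perfect precisely when it is dualizable under tensor product in $\cQ_\cT(\cX)$; by the monoidality of $\kappa_\bSi$, this translates into dualizability in $\ltr_\bSi$ under convolution. Theorem \ref{thm:duality} already supplies the matching of duals on $\Perf_\cT(\cX)$; we extend this to all of $\Sh_{cc}(M_\bR;\LbS)$ by constructing the convolution dual from Verdier duality composed with the antipode $x\mapsto -x$ on $M_\bR$, and verifying that the singular support of this dual still lies in some shard arrangement for $\bSi$ (so that it belongs to $\Shard(M_\bR;\bSi)=\ltr_\bSi$ by Theorem \ref{thm:thetagens}). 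Consequently every $F\in\Sh_{cc}(M_\bR;\LbS)$ is convolution-dualizable in $\ltr_\bSi$, so the preimage of $F$ under $\kappa_\bSi$ is tensor-dualizable in $\cQ_\cT(\cX)$ and therefore perfect.

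The main obstacle is step (ii): the Lagrangian $\LbS$ itself need not be stable under the antipodal involution of $T^*M_\bR$ when $\Si$ is not centrally symmetric, so the convolution dual of a sheaf with singular support in $\LbS$ may escape $\LbS$. The remedy is precisely the intrinsic characterization of $\ltr_\bSi$ as $\Shard(M_\bR;\bSi)$, which absorbs shard arrangements assembled out of antipode-transformed strata; one must then carefully track singular supports to verify that the convolution dual lands in this larger category, and invoke the standard criterion ``dualizable in the ambient monoidal category $\cQ_\cT(\cX)$ $\Leftrightarrow$ perfect.'' A secondary delicate point in (i) is the standard but essential fact that the $\bQ$-ample cone is open and spans the rational equivariant Picard group of a projective toric DM stack, which underlies the ample-minus-ample splitting.
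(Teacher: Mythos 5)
Your proof follows essentially the same route as the paper's: reduce containment in $\Sh_{cc}$ to line bundles via Theorem~\ref{Lc-generate}, split a line bundle as a ratio of $\bQ$-ample ones after reducing to the projective case via Lemma~\ref{chow} and functoriality, then establish essential surjectivity via (convolution-)dualizability. Your explicit ample-minus-ample decomposition $\cL\cong(\cL\otimes\cO_\cX(\uchi_0)^{\otimes n})\otimes(\cO_\cX(\uchi_0)^{\otimes n})^{\vee}$ is the same device the paper uses, just written out.

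Two remarks on your step~(ii). First, the ``main obstacle'' you identify is a misdiagnosis: while $\LbS$ is indeed not preserved by the full antipodal involution $(x,\xi)\mapsto(-x,-\xi)$ of $T^*M_\bR$, the convolution dual $-\cD$ moves singular supports by the composite of a fiber flip (Verdier) and a full flip (antipode), whose net effect is $(x,\xi)\mapsto(-x,\xi)$. Since each component $\tau_\chi^\perp\times(-\tau)$ of $\LbS$ goes to $\tau_{-\chi}^\perp\times(-\tau)$ under base-negation, and $-\chi\in M_\tau$, the Lagrangian $\LbS$ is already closed under this operation---no enlargement to $\Shard(M_\bR;\bSi)$ is logically forced by that worry. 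Second, and more substantively, having a candidate dual object $-\cD F$ in the right category does not by itself make $F$ dualizable; one must check the evaluation/coevaluation triangle identities. The paper handles this by citing \cite[Lemma~7.5]{more}, which shows $F\to F\star(-\cD F)\star F\to F$ is the identity for compactly supported $F$, and then transfers this to $\cG\to\cG\otimes\cH\otimes\cG\to\cG$ being the identity in $\cQ_\cT(\cX)$. Your sketch asserts convolution-dualizability but does not supply this ingredient; without it the step from ``dual candidate exists'' to ``strongly dualizable, hence perfect'' is incomplete.
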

\begin{proof} 
We first show that $\kappa_{\bSi}$ carries $\Perf_\cT(\cX)$ into $\Sh_{cc}(M_\bR;\LbS)$.
By Theorem \ref{Lc-generate}, it suffices to show that
if $\cL$ is a $\cT$-equivariant line bundle on $\cX$, then $\kappa(\cL)$
has compact support.  By functoriality, and by Lemma \ref{chow}, we may assume that the coarse moduli space $X_\Si$  is projective.  Then there exist
$\cT$-equivariant, $\bQ$-ample line bundles $\cL_1, \cL_2$ on $\cX$ such that
$\cL\cong \cL_1\otimes \cL_2^{-1}$. Then 
$$
\kappa_\bSi(\cL)=\kappa_{\bSi}(\cL_1)\star\kappa_{\bSi}(\cL_2^{-1}),
$$
where $\kappa_{\bSi}(\cL_2^{-1})=-\cD(\kappa_{\bSi}(\cL_2))$
by Theorem \ref{thm:duality}. By Theroem \ref{thm:algebraic},
$\kappa_{\bSi}(\cL_1)$ and $\kappa_{\bSi}(\cL_2)$ have compact supports.
Therefore $\kappa_{\bSi}(\cL)$ has compact support.

It is clear from our earlier results that $\kappa_{\bSi}:\Perf_\cT(\cX_{\bSi}) \to \Sh_{cc}(M_\bR;\LbS)$
is a fully faithful embedding of monoidal dg categories.
To complete the proof of the theorem it remains to show that
$\kappa_{\bSi}$ is essentially surjective. Suppose that 
$F\in Sh_{cc}(M_\bR;\LbS)$. Then $F\in \Shard(M_\bR;\bSi)$, so there exists
$\cG \in \langle \Theta'\rangle_{\bSi}$ such that 
$\kappa_\bSi(\cG)\cong F$. We also have $\cD F\in \Shard(M_\bR;\bSi)$, 
so there exists $\cH\in \langle \Theta'\rangle_{\bSi}$
such that $\kappa_\bSi(\cH)\cong -\cD F$. 
By \cite[Lemma 7.5]{more}, $F\to F\star (-\cD F) \star F\to F$
is the identity map. Therefore, 
$\cG \to \cG \otimes \cH \otimes \cG \to \cG$ is the identity map.
So $\cG$ is strongly dualizable, thus perfect (cf. \cite[Proposition 7.3]{more}).
\end{proof}

\section{Equivariant HMS for Toric DM  Stacks} \label{sec:HMS}

Recall that $Fuk(T^*M_\bR; \LbS)$ is a subcategory in the
Fukaya category $Fuk(T^*M_\bR)$, consisting of Lagrangian branes $L$
whose boundary at infinity $L^\infty$ is a subset of the infinity
boundary of $\Lambda_\bSi$. We use $F(T^*M_\bR; \Lambda_\bSi)$ to
denote the $A_\infty$ triangulated envelope of $Fuk(T^*M_\bR;
\LbS)$. The following theorem is a direct
consequence of Theorem \ref{thm:main3}, Theorem \ref{thm:algebraic} and the
microlocalization functor in \cite{N, NZ}.

\begin{theorem} \label{thm:main4}
If $\cX_{\bSi}$ is a complete DM stack then there is
a quasi-equivalence of triangulated $A_\infty$-categories:
$$
\tau: \Perf_\cT(\cX_{\bSi})\stackrel{\sim}{\to} F(T^*M_\bR; \LbS)
$$
which is given by the composition
$$
\Perf_\cT(\cX_{\bSi})\stackrel{\kappa_\bSi}{\to} Sh_{cc}(M_\bR;\LbS)
\stackrel{\mu_{M_\bR} }{\to}F(T^*M_\bR;\LbS).
$$
If $\cO_{\cX_\Si}(\uchi)$ is a $\cT$-equivariant $\bQ$-ample
line bundle associcated to a twisted polytope $\uchi=(\chi_1,\ldots,\chi_r)$
of $\bSi$ then $\tau(\cO_{\cX_\Si}(\uchi))$ is a costandard brane over
the interior of the convex hull $\mathbb{P}$ of $\chi_1,\ldots,\chi_r\in M_\bR$.
\end{theorem}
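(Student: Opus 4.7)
The plan is to derive the theorem by composing two quasi-equivalences already established (or cited) in the paper, and then to compute the image of a $\bQ$-ample line bundle by tracing it through each step.

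First I would establish the existence of $\tau$ as a quasi-equivalence. By Theorem \ref{thm:main3}, the functor $\kappa_\bSi$ restricts to a quasi-equivalence of (monoidal) triangulated dg categories $\Perf_\cT(\cX_\bSi) \stackrel{\sim}{\to} \Sh_{cc}(M_\bR;\LbS)$. On the other side, the microlocalization theorem of Nadler and the last author \cite{NZ, N} provides a quasi-equivalence $\mu_{M_\bR}: \Sh_{cc}(M_\bR;\LbS) \stackrel{\sim}{\to} F(T^*M_\bR;\LbS)$ of triangulated $A_\infty$-categories (this is stated in the introduction as the content of the right-hand vertical arrow of \eqref{eqn:quasiDM}; to apply it one only needs that $\LbS \subset T^*M_\bR$ is an $\bR_{\geq 0}$-invariant conical Lagrangian, which is immediate from its definition as a union of sets of the form $\tau_\chi^\perp \times -\tau$). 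Composing these two quasi-equivalences defines $\tau = \mu_{M_\bR} \circ \kappa_\bSi$, and the composition of quasi-equivalences is again a quasi-equivalence, giving the first assertion.

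Next I would compute $\tau(\cO_{\cX_\bSi}(\uchi))$ when $\cO_{\cX_\bSi}(\uchi)$ is $\bQ$-ample. By Theorem \ref{thm:algebraic}(2), the $\bQ$-ample hypothesis implies $\kappa_\bSi(\cO_{\cX_\bSi}(\uchi)) \cong j_!\omega_{\bfP^\circ}$, the costandard constructible sheaf on the interior $\bfP^\circ$ of the convex hull $\bfP$ of $\{\chi_1,\ldots,\chi_v\}$. It then remains to identify $\mu_{M_\bR}(j_!\omega_{\bfP^\circ})$ as a costandard Lagrangian brane over $\bfP^\circ$. This is a standard input from the microlocalization dictionary: under $\mu_{M_\bR}$, costandard constructible sheaves on open submanifolds are sent to costandard branes supported over their underlying open sets, with the canonical brane structure coming from the orientation of $\bfP^\circ$ (cf.\ the definitions and constructions in \cite{NZ, N}, already invoked in the smooth toric setting in \cite{more}).

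The main obstacle, and really the only nontrivial step, is justifying that $\mu_{M_\bR}$ applies in the stacky setting: that is, confirming that all the microlocalization arguments go through for the Lagrangian $\LbS$ rather than $\LS$. Fortunately, $\LbS$ is still a conical $\bR_{\geq 0}$-invariant Lagrangian of the same combinatorial flavor as $\LS$ (indeed $\LS \subset \LbS$ and $\LbS = \Lambda_{\bSi^\rig}$ as remarked after \eqref{eqn:quasiDM}), so the results of \cite{NZ, N} apply verbatim. The identification of $\mu_{M_\bR}(j_!\omega_{\bfP^\circ})$ with a costandard brane over $\bfP^\circ$ then reduces, locally near the smooth strata of $\partial\bfP$, to the same local computation used in the toric-variety case of \cite{hms, more}.
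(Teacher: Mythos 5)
Your proof is correct and follows essentially the same route as the paper, which states Theorem \ref{thm:main4} as a direct consequence of Theorem \ref{thm:main3}, Theorem \ref{thm:algebraic}, and the microlocalization theorem of \cite{NZ,N}. You spell out exactly that composition and correctly note that microlocalization applies verbatim since $\LbS$ is a conical $\bR_{\geq 0}$-invariant Lagrangian in $T^*M_\bR$ (the stackiness lives only on the coherent side), and you correctly trace a $\bQ$-ample line bundle through $\kappa_\bSi$ via Theorem \ref{thm:algebraic}(2) and then through $\mu_{M_\bR}$ to a costandard brane over $\bfP^\circ$.
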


We also have functoriality involving Fukaya categories \cite{N}. 
Let $Y_0$ and $Y_1$ be real analytical manifolds. An
object $\cK$ in $\Sh_c(Y_0\times Y_1)$ defines a functor
\begin{equation}
\Phi_{\cK !}: \Sh_c(Y_0)\to \Sh_c(Y_1),\quad \cF \mapsto
p_{1!}(\cK\otimes p_0^* \cF),
\end{equation}
where $p_0, p_1$ are projections of $Y_0\times Y_1$ to the
corresponding components. For a Lagrangian brane $L$ in
$Fuk(T^*(Y_0\times Y_1))$, define a functor
$$\Psi_{L!}:= \mu_{Y_1}\circ \Phi_{\cK!} \circ \mu_{Y_0}^{-1}:
F(T^*Y_0)\to F(T^*Y_1).$$

For two toric DM stacks  $\cX_1=\cX_{\bSi_1}$ and
$\cX_2=\cX_{\bSi_2}$ and a morphism of stacky fans
$f: \bSi_1= (N_1, \Si_1, \beta_1)\to \bSi_2 =(N_1,\Si_2,\beta_2)$, 
let $v:M_{2,\bR}\to M_{1,\bR}$  and $u:\cX_1 \to \cX_2$ be defined
as in Section \ref{sec:ccfunctoriality}. Set the Lagrangian brane
$L_v$ to be the conormal bundle $T^*_{\Gamma_v}(M_{2,\bR}\times
M_{1,\bR})$, where $\Gamma_v$ is the graph of $v$ in
$M_{2,\bR}\times M_{1,\bR}$. The derivation of the following
theorem is the same as that of \cite[Theorem 3.7]{hms}.

\begin{theorem}[functoriality]\label{thm:Fuk-functoriality}
For two complete toric DM stacks $\cX_1=\cX_{\bSi_1}$ and
$\cX_2=\cX_{\bSi_2}$  and a morphism of stacky fans
$$
f: \bSi_1= (N_1,\Si_1, \beta_1)\to
\bSi_2 =(N_2,\Si_2,\beta_2)
$$
and associated maps 
$u: \cX_1\to \cX_2$, $u|_{\cT_1}:\cT_1\to \cT_2$, $v: M_{2,\bR}\to M_{1,\bR}$, the following diagram
commutes up to quasi-isomorphism:
$$
\xymatrix{
 \Perf_{\cT_2}(\cX_2) \ar[d]_{u^*}  \ar[r]^{\kappa_2}  &
Sh_{cc}(M_{2,\bR}; \Lambda_{\bSi_2}) \ar[r]^{ \mu_{M_{2,\bR}}  }  \ar[d]^{v_!}&
F(T^*M_{2,\bR};\Lambda_{\bSi_2})\ar[d]^{\Psi_{L_v!} } \\
\Perf_{\cT_1}(\cX_1) \ar[r]^{\kappa_1}  &
 Sh_{cc}(M_{1,\bR}; \Lambda_{\bSi_1}) \ar[r]^{\mu_{M_{1,\bR}}} &
F(T^*M_{1,\bR};\Lambda_{\bSi_1})
}
$$
where $\kappa_i=\kappa_{\bSi_i}$.
\end{theorem}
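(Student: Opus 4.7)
The plan is to decompose the rectangular diagram into its two sub-squares and handle each separately. The left-hand square
\[
\xymatrix{
\Perf_{\cT_2}(\cX_2)\ar[r]^{\kappa_2}\ar[d]_{u^*} & \Sh_{cc}(M_{2,\bR};\Lambda_{\bSi_2})\ar[d]^{v_!}\\
\Perf_{\cT_1}(\cX_1)\ar[r]^{\kappa_1} & \Sh_{cc}(M_{1,\bR};\Lambda_{\bSi_1})
}
\]
commutes up to natural isomorphism by Theorem \ref{thm:functoriality}, once we verify its hypotheses: since both fans $\Si_1,\Si_2$ are complete by assumption, condition (i) is automatic, and condition (ii) is the only thing to check on $f:N_1\to N_2$; if $f$ is not injective one reduces to the injective case by factoring $f$ through its image (the non-injective directions correspond to $\cT_1\to \cT_2$ having a kernel, which disappears after passage to $\Perf_{\cT_i}$ via the quasi-equivalences of Proposition \ref{rigid}). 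So the first step is to invoke Theorem \ref{thm:functoriality} directly and, if needed, reduce to the injective case.

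For the right-hand square
\[
\xymatrix{
\Sh_{cc}(M_{2,\bR};\Lambda_{\bSi_2})\ar[r]^{\mu_{M_{2,\bR}}}\ar[d]_{v_!} & F(T^*M_{2,\bR};\Lambda_{\bSi_2})\ar[d]^{\Psi_{L_v!}}\\
\Sh_{cc}(M_{1,\bR};\Lambda_{\bSi_1})\ar[r]^{\mu_{M_{1,\bR}}} & F(T^*M_{1,\bR};\Lambda_{\bSi_1})
}
\]
the strategy is to realize both vertical functors as instances of a common categorical construction by means of an integral transform kernel, and then apply the Nadler microlocalization comparison theorem. Explicitly, one verifies that $v_! = \Phi_{\cK!}$ with $\cK = i_{\Gamma_v !}\omega_{\Gamma_v}$, the costandard constructible sheaf on the graph $\Gamma_v\subset M_{2,\bR}\times M_{1,\bR}$ of $v$. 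By the definition of $\Psi_{L_v!} = \mu_{M_{1,\bR}}\circ \Phi_{\cK!}\circ \mu_{M_{2,\bR}}^{-1}$ together with the identification $\mu_{M_{2,\bR}\times M_{1,\bR}}(\cK)\simeq L_v$ (the conormal of the graph is the microlocalization of the costandard sheaf on the graph), the right square commutes; this is the content of the argument given in \cite[Theorem 3.7]{hms} and transcribes verbatim to our setting, since nothing in that argument is sensitive to stackiness of $\cX_i$ --- it only involves the base spaces $M_{i,\bR}$ and the microlocalization functor.

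Combining the two squares yields the claimed commutativity of the outer rectangle up to natural quasi-isomorphism, which is the content of the theorem. The main obstacle in this outline is the right square: to set up the identification $\mu(\cK)\simeq L_v$ and to check that this identification intertwines the compositions $\mu_{M_{1,\bR}}\circ v_!$ and $\Psi_{L_v!}\circ \mu_{M_{2,\bR}}$ coherently. This was established in the context of toric varieties in \cite{hms} using the properties of the microlocalization functor developed in \cite{N}, and the same arguments apply here without modification because the singular-support constraint $\LbS=\Lambda_{\bSi^\rig}$ for a stacky fan is still a conical Lagrangian in a real vector-space cotangent bundle. The left square is then a direct citation of our already-established coherent-constructible functoriality, and the conclusion follows by pasting.
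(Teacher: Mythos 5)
Your decomposition of the rectangle into a left (coherent-constructible) square and a right (microlocalization) square matches the paper's intended route: the paper gives essentially a one-line proof, stating that the derivation is the same as \cite[Theorem 3.7]{hms}, which is exactly the pasting argument you spell out. The left square is Theorem \ref{thm:functoriality} and the right square is the microlocal functoriality from \cite{N,hms}, so the core of your proposal is correct.

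The one place your argument goes astray is in the treatment of the injectivity hypothesis (condition (ii) of Theorem \ref{thm:functoriality}). You propose to drop the hypothesis by "factoring $f$ through its image" and appeal to Proposition \ref{rigid} to discard "the non-injective directions." This is not what Proposition \ref{rigid} says: that proposition concerns the rigidification $\bSi^\rig$, i.e.\ passing from a finitely generated abelian group $N$ to $\baN = N/\Ntor$, and the resulting quasi-equivalences $\Perf_\cT(\cX_\bSi)\cong\Perf_T(\cX_{\bSi^\rig})$; it has nothing to do with collapsing a positive-dimensional kernel of $f:N_1\to N_2$. If $f$ has a positive-dimensional kernel, the induced map $\cT_1\to\cT_2$ has a positive-dimensional kernel acting trivially on $\cX_2$, and the dual map $v:M_{2,\bR}\to M_{1,\bR}$ fails to be surjective; none of this is handled by a rigidification. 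The cleaner reading is that Theorem \ref{thm:Fuk-functoriality} implicitly inherits the hypotheses of Theorem \ref{thm:functoriality} (as does the analogous statement in \cite{hms}), and both squares should simply be invoked under those hypotheses rather than attempting an unsupported reduction. With that correction, the rest of your argument — realizing $v_!$ as an integral transform with costandard kernel on $\Gamma_v$, identifying $\mu(\cK)\simeq L_v$, and citing the microlocalization comparison — is sound and follows the paper.
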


In \cite[Section 3.4]{hms}, we define a product structure
on the Fukaya category $F(T^*M_\bR)$ by 
\begin{equation}\label{eqn:diamond}
L_1 \diamond L_2= \Psi_{L_v!}(L_1\times L_2)
\end{equation} 
where $v:M_\bR \times M_\bR$ is the addition map. The following
is a special case of \cite[Proposition 3.9]{hms}: 
\begin{proposition} (the microlocalization interwines the product structures)
The microlocalization functor 
$\mu_{M_\bR}: Sh_{cc}(M_\bR)\stackrel{\sim}{\to} F(T^*M_\bR)$ 
intertwines the monoidal product on  $Sh_{cc}(M_\bR)$ given by the convolution, 
and the product structure on $F(T^*M_\bR)$ given by the product $\diamond$ 
defined by \eqref{eqn:diamond}, up to a quasi-isomorphism: 
the functors $\mu_{M_\bR}(-\star-)$ and $\mu_{M_\bR}(-)\diamond \mu_{M_\bR}(-)$ 
are quasi-isomorphic in the category of $A_\infty$-functors from 
$Sh_{cc}(M_\bR)\times Sh_{cc}(M_\bR)$ to $F(T^*M_\bR)$.
\end{proposition}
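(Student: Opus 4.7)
The plan is to unwind the definition of $\diamond$ and reduce the statement to the general functoriality of microlocalization for correspondence functors, which is the content of \cite[Proposition 3.9]{hms}. Concretely, let $v:M_\bR\times M_\bR\to M_\bR$ be the addition map and let $\Gamma_v\subset M_\bR\times M_\bR\times M_\bR$ be its graph. Let $\cK=i_{\Gamma_v*}\omega_{\Gamma_v}\in\Sh_c(M_\bR\times M_\bR\times M_\bR)$ be the (co)standard constructible sheaf along $\Gamma_v$; its singular support is $L_v=T^*_{\Gamma_v}(M_\bR\times M_\bR\times M_\bR)$ (up to the conventions identifying $\omega$ with a shifted constant sheaf).

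First I would verify that $\Phi_{\cK!}$ agrees with the convolution functor $\star$: by the projection formula and base change,
\[
\Phi_{\cK!}(F_1\boxtimes F_2)= p_{3!}\bigl(\cK\otimes p_{12}^*(F_1\boxtimes F_2)\bigr)\;\simeq\; v_!(F_1\boxtimes F_2)\;=\;F_1\star F_2,
\]
where $p_{12}$ and $p_3$ are the projections from $M_\bR\times M_\bR\times M_\bR$ to its first two and to its last factor respectively. Second, I would invoke the compatibility of $\mu$ with external product, which identifies $\mu_{M_\bR\times M_\bR}(F_1\boxtimes F_2)$ with $\mu_{M_\bR}(F_1)\times\mu_{M_\bR}(F_2)$ in $F(T^*M_\bR\times T^*M_\bR)$; this is part of the microlocalization package and is used throughout \cite{N, NZ, hms}.

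Putting the pieces together, for any $F_1,F_2\in Sh_{cc}(M_\bR)$,
\[
\mu_{M_\bR}(F_1)\diamond\mu_{M_\bR}(F_2)\;=\;\Psi_{L_v!}\bigl(\mu_{M_\bR}(F_1)\times\mu_{M_\bR}(F_2)\bigr)\;\simeq\;\mu_{M_\bR}\circ\Phi_{\cK!}\circ\mu_{M_\bR\times M_\bR}^{-1}\bigl(\mu_{M_\bR\times M_\bR}(F_1\boxtimes F_2)\bigr),
\]
which simplifies to $\mu_{M_\bR}(\Phi_{\cK!}(F_1\boxtimes F_2))\simeq\mu_{M_\bR}(F_1\star F_2)$, yielding the required quasi-isomorphism of bifunctors. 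Promoting this object-level quasi-isomorphism to a quasi-isomorphism of $A_\infty$-functors from $Sh_{cc}(M_\bR)\times Sh_{cc}(M_\bR)$ to $F(T^*M_\bR)$ is done naturally in $F_1,F_2$ because every arrow above is built from functorial constructions (pullback, pushforward, external product, microlocalization).

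The main obstacle will be the second step: identifying $\mu_{M_\bR\times M_\bR}(F_1\boxtimes F_2)$ with $\mu_{M_\bR}(F_1)\times\mu_{M_\bR}(F_2)$ as Lagrangian branes in $T^*(M_\bR\times M_\bR)\cong T^*M_\bR\times T^*M_\bR$, together with verifying this identification is functorial in $(F_1,F_2)$ up to coherent $A_\infty$-homotopy. Since this compatibility---and consequently the entire proposition in the non-stacky setting---is proved in \cite[Proposition 3.9]{hms}, the role of the present proof is to observe that the argument applies verbatim: nothing in the construction of $\star$, $\diamond$, or $\mu_{M_\bR}$ depends on the stacky fan $\bSi$, so the statement is indeed a special case (in fact the underlying case) of the result in \cite{hms}, and no new input is required.
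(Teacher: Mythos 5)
Your proposal is correct and matches the paper's treatment: the paper gives no independent argument, simply declaring the proposition to be a special case of \cite[Proposition 3.9]{hms}, which is exactly the conclusion you reach after unwinding the definitions of $\Phi_{\cK!}$ and $\diamond$. Your additional verification that the kernel functor along $\Gamma_v$ recovers convolution and that $\mu$ respects external products is a faithful (and slightly more explicit) account of what that cited result contains.
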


\begin{corollary}\label{cor:tensor2} 
The quasi-equivalence $\tau: \Perf_\cT(\cX_{\bSi})\stackrel{\sim}{\to} F(T^*M_\bR;\LbS)$ 
interwines the monoidal product on $\Perf_\cT(\cX_\bSi)$ given by the tensor product
$\otimes$ of sheaves,  and the product structure on $Fuk(T^*M_\bR;\LbS)$ given by the product $\diamond$ 
defined by \eqref{eqn:diamond}, up to a quasi-isomorphism: 
the functors $\tau(-\otimes-)$ and $\tau(-)\diamond \tau(-)$ 
are quasi-isomorphic in the category of $A_\infty$-functors from 
$\Perf_\cT(\cX_{\bSi})\times \Perf_\cT(\cX_{\bSi})$ to $Fuk(T^*M_\bR;\LbS)$.
\end{corollary}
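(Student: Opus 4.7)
The plan is to simply compose the two intertwining statements already in hand. By Corollary \ref{cor:tensor1}, the functor $\kappa_{\bSi}:\ltrp_\bSi \to \ltr_\bSi$ is a quasi-equivalence of monoidal dg categories, carrying the tensor product $\otimes$ on $\cT$-equivariant quasicoherent sheaves to the convolution product $\star$. Restricted to $\Perf_\cT(\cX_\bSi)$, Theorem \ref{thm:main3} tells us that the image lands in $\Sh_{cc}(M_\bR;\LbS)$, and one checks that convolution preserves both compact support and the singular support condition given by $\LbS$, so $\kappa_\bSi$ remains monoidal on this smaller subcategory. By the Proposition immediately preceding this corollary, the microlocalization functor $\mu_{M_\bR}$ is a quasi-equivalence intertwining $\star$ on $\Sh_{cc}(M_\bR)$ with the product $\diamond$ on $F(T^*M_\bR)$ defined in \eqref{eqn:diamond}, up to a quasi-isomorphism of $A_\infty$-bifunctors.

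Concretely, I would argue as follows. By Corollary \ref{cor:tensor1} applied to pairs of objects $\cF, \cG \in \Perf_\cT(\cX_\bSi)$, there is a natural quasi-isomorphism
\[
\kappa_\bSi(\cF\otimes \cG) \simeq \kappa_\bSi(\cF)\star \kappa_\bSi(\cG),
\]
coherent in the $A_\infty$-sense. Applying $\mu_{M_\bR}$ and invoking the cited proposition, there is a further natural quasi-isomorphism
\[
\mu_{M_\bR}\bigl(\kappa_\bSi(\cF)\star \kappa_\bSi(\cG)\bigr)\simeq \mu_{M_\bR}(\kappa_\bSi(\cF))\diamond \mu_{M_\bR}(\kappa_\bSi(\cG)).
\]
Composing these and using $\tau = \mu_{M_\bR}\circ \kappa_\bSi$ yields the desired quasi-isomorphism $\tau(\cF\otimes\cG)\simeq \tau(\cF)\diamond \tau(\cG)$.

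The only non-routine point is that these identifications must be compatible at the level of $A_\infty$-bifunctors from $\Perf_\cT(\cX_\bSi)\times \Perf_\cT(\cX_\bSi)$ to $Fuk(T^*M_\bR;\LbS)$, not merely object by object. However, this naturality is precisely the content of the two results being composed: Corollary \ref{cor:tensor1} gives an equivalence of monoidal dg categories (hence an $A_\infty$-natural identification of $\kappa_\bSi(-\otimes-)$ with $\kappa_\bSi(-)\star\kappa_\bSi(-)$), and the preceding proposition gives the corresponding $A_\infty$-natural identification for $\mu_{M_\bR}$. I expect the mildest technical point will be to confirm that the boundary-at-infinity condition defining $Fuk(T^*M_\bR;\LbS)$ is preserved under $\diamond$ of images of $\tau$; this follows because $\kappa_\bSi(\cF)\star \kappa_\bSi(\cG)$ lies in $\Sh_{cc}(M_\bR;\LbS)$ whenever its factors do (which in turn follows from Theorem \ref{thm:main3} applied to $\cF\otimes \cG$), so after microlocalization the product $\tau(\cF)\diamond \tau(\cG)$ automatically has boundary at infinity in $\LbS$.
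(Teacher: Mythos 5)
Your proposal is correct and matches the paper's (unstated but implicit) argument: the corollary is obtained by composing the monoidal intertwining of Corollary~\ref{cor:tensor1} (for $\kappa_{\bSi}$, tensor $\leftrightarrow$ convolution) with the monoidal intertwining of the preceding Proposition (for $\mu_{M_\bR}$, convolution $\leftrightarrow$ $\diamond$), using $\tau=\mu_{M_\bR}\circ\kappa_{\bSi}$ and Theorem~\ref{thm:main3} to control compact support and singular support. The paper leaves this composition to the reader, and your write-up supplies precisely that.
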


\begin{theorem}[equivariant homological mirror symmetry for toric DM stacks]
Let $\cX_\bSi$ be a complete toric DM stack defined by a stack fan $\bSi$. Then
there is an equivalence of tensor triangulated categories:
$$
H(\tau): D_\cT(\cX_{\bSi}) \stackrel{\sim}{\to} DF(T^*M_\bR;\LbS).
$$
\end{theorem}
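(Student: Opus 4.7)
The plan is to derive this as a direct formal consequence of Theorem \ref{thm:main4} together with Corollary \ref{cor:tensor2}. First I would invoke Theorem \ref{thm:main4}, which provides a quasi-equivalence $\tau: \Perf_\cT(\cX_{\bSi})\stackrel{\sim}{\to} F(T^*M_\bR; \LbS)$ of triangulated $A_\infty$-categories. Taking cohomology is functorial for quasi-equivalences of triangulated $A_\infty$-categories, so passing to $H$ yields an equivalence of triangulated categories
$$
H(\tau): H(\Perf_\cT(\cX_{\bSi})) \stackrel{\sim}{\longrightarrow} H(F(T^*M_\bR;\LbS)).
$$

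Next I would identify the two sides with the categories in the statement. By Corollary \ref{K}(a), the cohomology category of $\Perf_\cT(\cX_{\bSi})$ is precisely the bounded derived category $D_\cT(\cX_{\bSi})$ of $\cT$-equivariant coherent sheaves on $\cX_{\bSi}$. The right-hand side equals $DF(T^*M_\bR;\LbS)$ by the conventions set up in Section \ref{sec:categories}, since $F$ denotes the triangulated $A_\infty$-envelope of the relevant Fukaya category and $DF$ is by definition the underlying triangulated cohomology category.

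The remaining substantive step is to promote $H(\tau)$ to a tensor equivalence. Here Corollary \ref{cor:tensor2} does the work: it asserts that the $A_\infty$-bifunctors $\tau(- \otimes -)$ and $\tau(-) \diamond \tau(-)$ are quasi-isomorphic as $A_\infty$-bifunctors $\Perf_\cT(\cX_{\bSi}) \times \Perf_\cT(\cX_{\bSi}) \to F(T^*M_\bR;\LbS)$. Applying $H$ to such a quasi-isomorphism of bifunctors produces an honest natural isomorphism between the induced bifunctors on cohomology, which is precisely the data needed to promote $H(\tau)$ to an equivalence of tensor triangulated categories (where the tensor structure on $DF(T^*M_\bR;\LbS)$ is induced by $\diamond$, as in \cite[Section 3.4]{hms}).

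The principal obstacle, conceptually, has already been dispatched by earlier results: the coherent-constructible correspondence of Theorem \ref{thm:main3}, the microlocalization theorem \cite{NZ,N} supplying $\mu_{M_\bR}$, and the compatibility of $\mu_{M_\bR}$ with convolution versus $\diamond$ (the proposition preceding Corollary \ref{cor:tensor2}). Thus the only item that needs to be checked directly is the standard fact that a quasi-isomorphism of $A_\infty$-bifunctors descends to a natural isomorphism between the induced bifunctors on cohomology categories; this is routine and completes the proof.
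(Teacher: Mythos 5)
Your proposal is correct and follows exactly the route the paper intends: the theorem is presented (without an explicit proof) as a formal consequence of Theorem \ref{thm:main4}, Corollary \ref{K}(a), and Corollary \ref{cor:tensor2}, and you have correctly supplied the routine passage to cohomology together with the verification that a quasi-isomorphism of $A_\infty$-bifunctors descends to a natural isomorphism on cohomology categories.
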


\end{document}